\documentclass[reqno,11pt]{amsart}

%\usepackage{hyperref}
%\usepackage[PostScript=dvips,dpi=600,nohug,small]{diagrams}
%\usepackage[all]{xy}
%\CompileMatrices
\usepackage{tikz}
\usepackage{tikz-cd}
\usepackage{amssymb}
\usepackage{amsgen}
\usepackage{amsmath}
\usepackage{amsthm}
\usepackage{mathrsfs}
\usepackage{cite}
\usepackage{amsfonts}

\hyphenation{mon-oid mon-oids}

\newcommand{\sour}{\mathop{\boldsymbol s}}
\newcommand{\Bis}{\Gamma}
\newcommand{\skel}[1]{^{(#1)}}

\newcommand{\dom}{\mathop{\boldsymbol d}}
\newcommand{\ran}{\mathop{\boldsymbol r}}

\renewcommand{\to}{\longrightarrow}

\newcommand{\supp}{\mathop{\mathrm{supp}}}

 % J - relation
 % D - relation
 % R - relation
 % L - relation

\newcommand{\inv}{^{-1}}
\newcommand{\p}{\varphi}
\newcommand{\pinv}{{\p \inv}}

\newcommand{\wh}{\widehat}

%\newcommand{\nup}[1]{{#1}^{\not\hskip 1pt\uparrow}}

% some macros added by Franco
\usepackage{xcolor}

 % for hyperplane arrangement
 % intersection lattice of hyperplane arrangement
 % face monoid of hyperplane arrangement
 % symbol for oriented matroid

%\def\dom{s}
%\def\ran{t}

%\@ifundefined{TheoremsNumberedGlobally}{
\newtheorem{Thm}{Theorem}[section]
\newtheorem{Prop}[Thm]{Proposition}

{\theoremstyle{definition}
}
{\theoremstyle{remark}
\newtheorem{Rmk}[Thm]{Remark}}
\newtheorem{Cor}[Thm]{Corollary}
{\theoremstyle{remark}
}
{\theoremstyle{remark}
}

{\theoremstyle{remark}
}
{\theoremstyle{remark}
}
{\theoremstyle{remark}
}

\numberwithin{equation}{section}

\title{Diagonal-preserving isomorphisms of \'etale groupoid algebras}

\author{Benjamin Steinberg}
\address[B.~Steinberg]{%
    Department of Mathematics\\
    City College of New York\\
    Convent Avenue at 138th Street\\
    New York, New York 10031\\
    USA}
\email{bsteinberg@ccny.cuny.edu}

\thanks{The author was supported by  NSA MSP \#H98230-16-1-0047 and by a PSC CUNY grant.}
\date{November 6, 2017}

\keywords{\'etale groupoids,  groupoid algebras, diagonal-preserving isomorphisms}
\subjclass[2010]{20M18,20M25, 16S99,16S36, 22A22, 18F20}
%\subjclass[2000]{}

\begin{document}

\begin{abstract}
Work of Jean Renault shows that, for topologically principal \'etale groupoids, a diagonal-preserving isomorphism of reduced $C^*$-algebras yields an isomorphism of groupoids.  Several authors have proved analogues of this result for ample groupoid algebras over integral domains  under suitable hypotheses.  In this paper, we extend the known results by allowing more general coefficient rings and by weakening the hypotheses on the groupoids.  Our approach has the additional feature that we only need to impose conditions on one of the two groupoids.  Applications are given to Leavitt path algebras.
\end{abstract}

\maketitle

\section{Introduction}
Groupoid $C^*$-algebras have played a prominent role in the theory of operator algebras since Renault's seminal monograph~\cite{Renault}; see also Connes~\cite{Connes}.   In recent years, there has been a flurry of activity~\cite{operatorsimple1,operatorguys2,reconstruct,GroupoidMorita,CarlsenSteinberg,Strongeffective,ClarkPardoSteinberg,groupoidbundles,groupoidprimitive,Clarkdecomp,GonRoy2017a,GonRoy2017,Hazrat2017,AraSims2017,Purelysimple,gpdchain} around groupoid algebras over commutative rings~\cite{mygroupoidalgebra,operatorguys1}, as this class encompasses group algebras, Leavitt path algebras~\cite{LeavittBook,Abramssurvey,Leavitt1,LeavittPardo} and inverse semigroup algebras amongst other interesting rings.

Groupoid algebras are built from an ample groupoid $\mathscr G$ and a commutative ring with unit $R$.  An ample groupoid is an \'etale groupoid with a totally disconnected, locally compact unit space.  If $\mathscr G$ and $\mathscr G'$ are isomorphic ample groupoids, then there is a diagonal-preserving isomorphism $R\mathscr G\to R\mathscr G'$ of algebras, where the diagonal subalgebra of a groupoid algebra is the commutative algebra of compactly supported locally constant functions on the unit space.  Groupoid reconstruction is concerned with determining to what extend the converse holds.

Renault~\cite{renaultcartan} proved in the operator theoretic setting that a diagonal-preserving isomorphism between reduced $C^*$-algebras implies an isomorphism of groupoids provided the groupoids are topologically principal.  This result has inspired most of the work in the algebraic setting.

Prior to this paper, all of the work has focused on the case where $R$  is an integral domain.  Of course, the isomorphism problem for group rings (when does $\mathbb ZG_1\cong \mathbb ZG_2$ imply that the groups $G_1,G_2$ are isomorphic) is exactly our problem in the special case that the ample groupoids are discrete groups and the ring $R$ is the integers.  There is a famous example~\cite{isointegralgroup} of two finite non-isomorphic groups with isomorphic integral group rings and so diagonal-preserving isomorphisms cannot recover the groupoid in general; simpler examples are, of course, available over the field of complex numbers.  Note that if Kaplansky's Unit Conjecture~\cite{PassmanBook} is true, then a torsion-free group is determined by its algebra and diagonal subalgebra over any integral domain.

In~\cite{BCvH2017} it was shown that a diagonal-preserving isomorphism of $\ast$-rings between Leavitt path algebras implies an isomorphism of the corresponding path groupoids. Of course, a groupoid isomorphism yields a diagonal-preserving isomorphism of $\ast$-rings.  However, we shall not require the $\ast$-structure to be preserved in this paper.  The paper~\cite{reconstruct} shows that, for topologically principal groupoids, a diagonal-preserving ring isomorphism between groupoid algebras gives rise to an isomorphism of groupoids; this is the groupoid analogue of Renault's theorem~\cite{renaultcartan}.  Topologically principal groupoids are a special case of the important class of effective groupoids.  For second countable groupoids, which is what operator theorists usually consider, being effective is equivalent to being topologically principal, but in general it is a weaker condition~\cite{operatorsimple1}.

The strongest result prior to our work is that of Carlsen and Rout~\cite{CarlsenSteinberg}.  They prove that groupoid reconstruction is possible as long as there is a dense set of objects such that the group ring of the isotropy group at each of these objects has no zero divisors and no non-trivial units.   Diagonal-preserving isomorphisms are classified in~\cite{cordeiro} for this context.  We remark that over an integral domain, having no zero divisors and no non-trivial units is equivalent to being torsion-free and having no non-trivial units~\cite{PassmanBook}.  This condition is satisfied by any left or right orderable group and, in particular, by any torsion-free abelian group.  The results of Carlsen and Rout imply those of~\cite{BCvH2017,reconstruct} discussed above, but they are not strong enough to cover all effective groupoids since, in principle, a groupoid can be effective and fail the no zero divisors and no non-trivial units condition at each object.
The results on diagonal-preserving isomorphisms of Leavitt path algebras were put to good effect in the paper~\cite{CRS17} in connection with the work of Matsumoto and Matui on symbolic dynamics and groupoids, cf.~\cite{MatsMatu}.

We generalize the results of Carlsen and Rout in several ways.  Firstly, we weaken the hypotheses on the base ring $R$.  Secondly,  we show that one can work with the group rings of the isotropy groups of the interior of the isotropy bundle; this will allow us to apply our results to all effective groupoids.  Also, we do not require the group rings to have no zero divisors; we just need no non-trivial units.  For example, the integral group rings of finite abelian groups of exponent $4$ or $6$, as well as direct products of quaternion groups or left orderable groups with elementary abelian $2$-groups~\cite{higmanunit}, have no non-trivial units  but have zero divisors and so give examples to which our results apply but those of~\cite{CarlsenSteinberg} do not. Finally, our results only require one of the two groupoids linked by a diagonal-preserving isomorphism of algebras to satisfy our hypotheses in order to conclude that the groupoids are isomorphic; previous papers~\cite{BCvH2017,reconstruct,CarlsenSteinberg} required both groupoids to satisfy their hypotheses.

Despite working in a more general setting, we believe that our proofs are technically simpler than those in~\cite{BCvH2017,reconstruct,CarlsenSteinberg}.  Our chief innovation is to provide a simple, direct proof that the normalizer of the diagonal subalgebra is an inverse semigroup.  This type of approach was suggested by the referee of~\cite{reconstruct}, but the papers~\cite{BCvH2017,reconstruct,CarlsenSteinberg} use instead the full strength of their hypotheses to achieve this.  We introduce what we call the local bisection hypothesis on the normalizer of the diagonal; it is an analogue of the no non-trivial units hypothesis for group rings.  Under this hypothesis we show that the inverse semigroup of compact local bisections of the original ample groupoid can be recovered as the normalizer of the diagonal subalgebra, modulo its normal inverse subsemigroup of diagonal elements.  Since an ample groupoid is the tight groupoid of its associated inverse semigroup of compact local bisections~\cite{Exel,exelrecon,LawsonLenz}, this allows us to recover the groupoid.  A crucial advantage of working in this axiomatic setting is that we can show that the local bisection hypothesis can be checked on the interior of the isotropy bundle and hence we may reduce to the case of a group bundle, which is technically easier.  Also, we show that the local bisection hypothesis is invariant under diagonal-preserving ring isomorphisms, analogously to the no non-trivial units hypothesis for group rings~\cite[Chpt.~14, Thm~3.1]{PassmanBook}.

Our results, like those of~\cite{BCvH2017,reconstruct,CarlsenSteinberg}, are carried out  in the more general graded setting. This entails a little extra work because we must prove that a graded ample groupoid can be reconstructed from its graded inverse semigroup of homogeneous compact local bisections.  The ungraded case can be recovered from the graded case by assuming the group providing the grading is trivial.

The paper is organized as follows.  We begin with a section of preliminaries on group rings, inverse semigroups, groupoids and groupoids of germs.  We then introduce graded inverse semigroups and show how graded ample groupoids can be recovered from their graded inverse semigroup of homogeneous compact local bisections.  The next section studies the graded normalizer of the diagonal subalgebra of the algebra of a graded ample groupoid.  The normalizer is shown to be an inverse semigroup and its structure is elucidated.  We then introduce the local bisection hypothesis.  Sufficient conditions on a groupoid to ensure the local bisection hypothesis are provided. We then show that the graded groupoid can be reconstructed from its algebra and diagonal subalgebra under this hypothesis by recovering the inverse semigroup of homogeneous compact local bisections as the normalizer factored out by its normal subsemigroup of diagonal elements.   The paper ends with an application to Leavitt path algebras~\cite{LeavittBook}.

\section{Preliminaries}
Let $R$ be a commutative ring with unit.   The ring $R$ is \emph{indecomposable} (or \emph{connected}) if $0,1$ are the only idempotents of $R$.  This is equivalent to $R$ not being isomorphic to a non-trivial direct product and it is also equivalent to  the Zariski spectrum of $R$ being connected.  Every integral domain is indecomposable, but so are local rings like $\mathbb Z/p^n\mathbb Z$ with $p$ prime.  The ring $R$ is \emph{reduced} if it contains no non-zero nilpotent elements.   The coordinate ring of an affine variety that is connected, but not irreducible, gives a reduced indecomposable ring that is not an integral domain, e.g., $\mathbb C[x,y]/(xy)$. Another example is the integral group ring of $\mathbb Z/2\mathbb Z$.

The group of units of a ring $A$ will be denoted by $A^\times$.

\subsection{Group rings}
Let $G$ be a group and $RG$ the corresponding group algebra.   Put $R^\times G=\{rg\mid r\in R^\times, g\in G\}$.  Note that $R^\times G$ is a subgroup of the unit group $(RG)^\times$; the elements of $R^\times G$ are called \emph{trivial units}.  A group ring is said to have \emph{no non-trivial units} if $(RG)^\times=R^\times G$.  In this case, $G\cong (RG)^\times/R^\times$ and hence if $RG$ has no non-trivial units, then $RG\cong RH$ as $R$-algebras implies that $G\cong H$, cf.~\cite[Chpt.~14, Thm~3.1]{PassmanBook}.

A group is said to have the \emph{unique product property} if given two finite non-empty subsets $A,B$ of $G$, there is an element of $AB$ that can be uniquely written as a product $ab$ with $a\in A$ and $b\in B$.  This property is satisfied by any left or right orderable group and so, in particular, for any torsion-free abelian group or free group; see~\cite{PassmanBook}.  If $R$ is an integral domain and $G$ has the unique product property, then $RG$ has no zero divisors and no non-trivial units~\cite{PassmanBook}. We remark that it is known that if $R$ is an integral domain, then in order for $RG$ to have no zero divisors, $G$ must be torsion-free and that, for torsion-free groups, having no non-trivial units implies having no zero divisors; see~\cite{PassmanBook}. Kaplansky's Unit Conjecture asserts that if $G$ is torsion-free and $R$ is an integral domain, then $RG$ has no non-trivial units~\cite{PassmanBook}.

Let us say that $G$ is a \emph{trivial units only group} if $kG$ has no non-trivial units for every field $k$, for example, a unique product property group.  The following proposition is a direct consequence of~\cite[Theorem~3]{Neher} (or rather its proof, since the stated result is slightly weaker than we need).  We provide a proof for the reader's convenience.

\begin{Prop}\label{p:Neher}
Let $G\neq 1$ be a trivial units only group (e.g., a unique product property group). Then $RG$ has no non-trivial units if and only if $R$ is reduced and indecomposable.
\end{Prop}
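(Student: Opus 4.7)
For the \emph{only if} direction, assume $RG$ has only trivial units and fix some $g\in G$ with $g\neq 1$, which exists since $G\neq 1$. If $r\in R$ is non-zero and nilpotent with $r^n=0$, then $1+rg$ is a unit with inverse $\sum_{i=0}^{n-1}(-1)^i r^ig^i$ and two-element support $\{1,g\}$, contradicting triviality. Similarly, a non-trivial idempotent $e\in R$ produces the unit $e+(1-e)g$ with inverse $e+(1-e)g\inv$ and support $\{1,g\}$, again impossible. Hence $R$ must be reduced and indecomposable.

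For the \emph{if} direction, assume $R$ is reduced and indecomposable and let $u=\sum_{g\in S}a_g g\in (RG)^\times$, where $S$ is finite and each $a_g$ is non-zero. For any prime $\mathfrak p$ of $R$ the image $\ov u$ of $u$ in $k(\mathfrak p) G$ is a unit, hence a trivial unit by the hypothesis on $G$, so there is a unique $g(\mathfrak p)\in S$ with $a_{g(\mathfrak p)}\notin\mathfrak p$, while $a_{g'}\in \mathfrak p$ for all other $g'\in S$. Consequently, for distinct $g,g'\in S$, the product $a_g a_{g'}$ lies in every prime of $R$, hence in the nilradical, and therefore vanishes since $R$ is reduced. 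This gives $D(a_g)\cap D(a_{g'})=\emptyset$ for $g\neq g'$ in $S$, and the open sets $\{D(a_g)\}_{g\in S}$ cover $\mathrm{Spec}(R)$.

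Since $R$ is indecomposable, $\mathrm{Spec}(R)$ is connected, so a partition of it into finitely many open sets forces all but one to be empty. Thus exactly one $D(a_{g_0})$ is non-empty (and equals $\mathrm{Spec}(R)$); for every other $g'\in S$, the element $a_{g'}$ lies in every prime and so $a_{g'}=0$ by reducedness, contradicting $g'\in S$. Hence $S=\{g_0\}$, and $a_{g_0}$ lies outside every prime, so $a_{g_0}\in R^\times$ and $u=a_{g_0}g_0$ is a trivial unit. The main obstacle is the \emph{if} direction, specifically translating the primewise trivial-unit information into the algebraic identities $a_g a_{g'}=0$; once these are in hand, connectedness of $\mathrm{Spec}(R)$ collapses the support of $u$ to a single element.
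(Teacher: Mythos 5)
Your proof is correct and follows essentially the same route as the paper: the necessity direction uses the same two explicit non-trivial units, and the sufficiency direction is the paper's argument over $\mathrm{Spec}(R)$, merely repackaged (a partition of the connected spectrum into the disjoint open sets $D(a_g)$ in place of the paper's locally constant map $\mathfrak p\mapsto g(\mathfrak p)$). The intermediate identities $a_ga_{g'}=0$ are a harmless extra step, since disjointness of the $D(a_g)$ already follows from the uniqueness of the surviving coefficient at each prime.
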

\begin{proof}
We begin with necessity.  Fix $g\in G\setminus \{1\}$.  Suppose first that $R$ is not indecomposable and let $e\neq 0,1$ be an idempotent.  Then $e+(1-e)g$ is a non-trivial unit with inverse $e+(1-e)g\inv$.  If $0\neq n\in R$ is nilpotent, the $ng$ is nilpotent and so $1-ng$ is a non-trivial unit.

For sufficiency, we use a little algebraic geometry following~\cite{Neher}. Since $R$ is indecomposable, its prime spectrum $\mathrm{Spec}(R)$ is connected. If $\mathfrak p\in \mathrm{Spec}(R)$, then $\kappa(\mathfrak p)$ denotes the field of fractions of $R/\mathfrak p$ and if $a\in R$, then $a(\mathfrak p)$ denotes the image of $a$ in $\kappa(\mathfrak p)$.  Let $u\in RG$ be a unit, say $u=\sum_{g\in G}a_gg$.  The image of $u$ under the canonical homomorphism $RG\to \kappa(\mathfrak p)G$ is a unit and hence a trivial unit by hypothesis.  If follows that there is a unique element $h\in G$ with $a_h(\mathfrak p)\neq 0$.  Denote this element $g(\mathfrak p)$.  Thus we have a mapping $g\colon \mathrm{Spec}(R)\to G$ that we claim is locally constant.  Indeed, if $g(\mathfrak p)=h$, then $a_h\notin \mathfrak p$. Let $D(a_h)$ denote the basic open set of all prime ideals not containing $a_h$.  Then if $\mathfrak q\in D(a_h)$, we have $a_h(\mathfrak q)\neq 0$ and so $g(\mathfrak q)=h$.  It follows, since $\mathrm{Spec}(R)$ is connected, that $g$ is a constant mapping.  Therefore, there is an element $h\in G$ such that, for every  $\mathfrak  p\in \mathrm{Spec}(R)$, we have $a_h(\mathfrak p)\neq 0$ and $a_{h'}(\mathfrak p)=0$ for all $h'\neq h$.  Since $R$ is reduced, the intersection of all of its prime ideals is zero and so $a_{h'}=0$ for $h'\neq h$. On the other hand, since $a_h$ belongs to no prime ideal, and hence to no maximal ideal, we must have that $a_h$ is a unit.  Thus $u=a_hh$ is a trivial unit.
\end{proof}

For instance, $R\mathbb Z$ has no non-trivial units if and only if $R$ is reduced and indecomposable.  Note that the proof of necessity in Proposition~\ref{p:Neher} is valid for any non-trivial group.  Also in the proof of sufficiency, we just need that $kG$ has no non-trivial units for each field $k$ that is an $R$-algebra.

Higman~\cite{higmanunit} showed that if $G$ is a finite abelian group of exponent $4$ or $6$, or a quaternion group, then $\mathbb ZG$ has no non-trivial units.  He also showed that if $\mathbb ZG$ has no non-trivial units, then $\mathbb Z[G\times \mathbb Z/2\mathbb Z]$ has only trivial units.

\subsection{Inverse semigroups}
An \emph{inverse semigroup} is a semigroup $S$ such that, for each $s\in S$, there exists a unique element $s^*\in S$ with $ss^*s=s$ and $s^*ss^*=s^*$.  Moreover, one has that $(s^*)^*=s$, $(st)^*=t^*s^*$ and $ss^*t^*t= t^*tss^*$ for all $s,t\in S$.  The reader is referred to Lawson's book~\cite{Lawson} for the theory of inverse semigroups.  Note that every group is an inverse semigroup.

The set $E(S)$ of idempotents of an inverse semigroup is a commutative subsemigroup and is a meet semilattice with respect to the partial ordering $e\leq f$ if $ef=e$; the meet is the product.  Also, if $e\in E(S)$, then $e^*=e$.  The partial order extends to the entire inverse semigroup by putting $s\leq t$ if $s=te$ for some idempotent $e\in E(S)$ (or, equivalently, $s=ft$ for some  $f\in E(S)$).  This partial order is stable for multiplication and inversion.

Inverse semigroups can be alternatively axiomatized as those von Neumann regular semigroups with commuting idempotents where we recall that a semigroup $S$ is \emph{von Neumann regular} if, for all $s\in S$, there exists $s'\in S$ with $ss's=s$.

A \emph{zero element} of an inverse semigroup $S$ is an element $z$ such that $zx=z=xz$ for all $x\in S$.  Zero elements are unique when they exist and are usually denoted $0$.  Most inverse semigroups of interest to us will have a zero.

Any semigroup homomorphism $\p\colon S\to T$ of inverse semigroups automatically preserves the involution.

An inverse subsemigroup $T$ of $S$ is said to be \emph{full} if $E(T)=E(S)$.  Notice that if $T$ is a full inverse subsemigroup of $S$, then it is an \emph{order ideal}, that is, if $s\leq t$ with $t\in T$, then $s\in T$.  Indeed, $s=te$ with $e\in E(S)=E(T)$ and so $s\in T$.

\subsection{Ample groupoids}
A \emph{groupoid} $\mathscr G$ is a small category in which every morphism is an isomorphism.  We write $\mathscr G\skel 0$ for the set of objects of $\mathscr G$ and $\mathscr G\skel 1$ for the set of arrows.  The domain and range maps are denoted $\dom$ and $\ran$, respectively.  We typically view $\mathscr G\skel 0$ as a subset of $\mathscr G\skel 1$ by identifying an object $x$ with the identity morphism at $x$.  A \emph{topological groupoid} is a groupoid in which $\mathscr G\skel 0$ and $\mathscr G\skel 1$ are equipped with topologies such that all the groupoid structure maps are continuous.  A topological groupoid is \emph{\'etale} if all the structure maps are local homeomorphisms; actually, it is enough for the domain map to be a local homeomorphism. In this case $\mathscr G\skel 0$ is an open subspace of $\mathscr G\skel 1$.  See~\cite{Exel,Paterson,resendeetale} for details.  Morphisms of topological groupoids are continuous functors.  An isomorphism of topological groupoids is then a continuous functor with a continuous inverse.

We shall call a Hausdorff space $X$ with  a basis of compact open sets a \emph{Boolean space}.  Note that $X$ is determined up to homeomorphism by its (generalized) Boolean algebra of compact open subsets by Stone duality.  An \'etale groupoid $\mathscr G$ is called \emph{ample} if $\mathscr G\skel 0$ is a Boolean space.  If $\mathscr G\skel 1$ is Hausdorff, then $\mathscr G\skel 1$ will also be a Boolean space.  In this paper we assume that all groupoids are Hausdorff although we shall repeat this hypothesis in the main theorems.

Groups are precisely one-object ample groupoids.  More generally, any discrete groupoid is ample.  If $X$ is a Boolean space, then we can view it as an ample groupoid in which $\mathscr G\skel 0=X=\mathscr G\skel 1$, that is, as a groupoid of identity morphisms.

A \emph{local bisection} of an \'etale groupoid $\mathscr G$ is an open subset $U$ of $\mathscr G\skel 1$ such that $\dom|_U$ and $\ran|_U$ are injective.    The set of  compact local bisections is denoted $\Bis_c(\mathscr G)$.  The groupoid $\mathscr G$ is ample if and only if $\Bis_c(\mathscr G)$ is a basis for the topology on $\mathscr G\skel 1$ (cf.~\cite{Paterson}).  If $U,V\in \Bis_c(\mathscr G)$, then so is
 \[UV=\{\alpha\beta\mid \alpha\in U,\beta\in V\}\] and so $\Bis_c(\mathscr G)$ is an inverse semigroup with $U\inv =\{\gamma\inv\mid \gamma\in U\}$ as the inverse of $U$ (in the sense of inverse semigroup theory)~\cite{Exel,Paterson}.

 If $x\in \mathscr G\skel 0$, then the \emph{isotropy group} of $\mathscr G$ at $x$ is the group \[G_x=\{\gamma\in \mathscr G\skel 1\mid \dom(\gamma)=x=\ran(\gamma)\}.\]  The \emph{orbit} of $x$ is the set of objects $y$ such that there is a morphism from $x$ to $y$.  A subset of $\mathscr G\skel 0$ is \emph{invariant} if it is a union of orbits.

  If $\mathscr G$ is a groupoid, the \emph{isotropy bundle} of $\mathscr G$ is the subgroupoid consisting of all objects and those arrows belonging to $\bigcup_{x\in \mathscr G\skel 0}G_x$.  The isotropy bundle of an \'etale groupoid need not be \'etale but the interior of the isotropy bundle is \'etale.  An \'etale groupoid is said to be \emph{effective} if the interior of the isotropy bundle consists of just the identity morphisms.  This is equivalent to the natural action of $\Bis_c(\mathscr G)$ on $\mathscr G\skel 0$ being faithful (cf.~\cite{Exel}).  A closely related notion is that of a topologically principal groupoid.  An \'etale groupoid is \emph{topologically principal} if the set of objects with trivial isotropy group is dense.  Topologically principal ample groupoids are effective and the converse is true for second countable groupoids~\cite{operatorsimple1,renaultcartan}.  In general, these notions are different.  In~\cite{operatorsimple1}, an effective ample groupoid is constructed in which every isotropy group is infinite cyclic.

If $G$ is a group, a \emph{cocycle} $c\colon \mathscr G\to G$ is a continuous functor (where $G$ is viewed as a discrete one-object groupoid).  The fiber over $g\in G$ is denoted $\mathscr G_g$ and is called the \emph{homogeneous component of $g$}.  Note that $\mathscr G_1$ is a clopen subgroupoid of $\mathscr G$.  We call $\mathscr G$ a $G$-graded groupoid.  There is an obvious category of $G$-graded groupoids. We call a local bisection $U$ \emph{homogeneous} if $U\subseteq c\inv(g)$ for some $g\in G$.  The homogeneous compact local bisections form an inverse subsemigroup $\Bis_c^h(\mathscr G)$ of $\Bis_c(\mathscr G)$.  Of course, every groupoid is trivially graded by the trivial group, in which case, $\Bis_c^h(\mathscr G)$ reduces to $\Bis_c(\mathscr G)$.

\subsection{Groupoids of germs}
Let $X$ be a Boolean space. We denote by $I_X$ the inverse semigroup of all partial homeomorphisms of $X$ with compact open domain. An action of an inverse semigroup $S$ with zero on $X$ is a zero-preserving homomorphism $\p\colon S\to I_X$ such that if $X_e$ denotes the domain of an idempotent $e$, then $\bigcup_{e\in E(S)}X_e=X$; this last condition says that the action is non-degenerate. An important example is given by the spectral action of $S$, which we now proceed to describe.

If $E$ is a meet semilattice with zero, a \emph{character} of $E$ is a non-zero homomorphism $\tau\colon E\to \{0,1\}$ of semilattices with zero.  The space $\mathrm{Spec}(E)$ of all characters on $E$ is a Boolean space with respect to the topology of pointwise convergence.  An \emph{ultracharacter} is a maximal character with respect to the pointwise ordering on characters.   The set of ultracharacters of $E$ is denoted $\wh E$.  It is well known that if $E$ is a generalized Boolean algebra, then the ultracharacters are precisely the generalized Boolean algebra homomorphisms.  In this case, $\wh E$ is a closed subspace of $\mathrm{Spec}(E)$ with basic compact open sets of the form $D(e)=\{\tau\in \wh{E}\mid \tau(e)=1\}$.  If $X$ is a Boolean space and $\mathcal B$ is its generalized Boolean algebra of compact open sets, then $X$ is homeomorphic to $\wh B$ via the mapping $x\mapsto \tau_x$ where $\tau_x(U) = \chi_U(x)$ (here $\chi_U$ is the characteristic function of $U$).

If $S$ is an inverse semigroup with zero, then $S$ acts on $\mathrm{Spec}(E(S))$ as follows. For $e\in E(S)$, let $D(e) = \{\tau\in \mathrm{Spec}(E(S))\mid \tau(e)=1\}$.  Then the domain of the action of $s\in S$ is $D(s^*s)$ and the range is $D(ss^*)$.  The action is given by $s\tau(e) = \tau(s^*es)$. The space $\wh{E(S)}$ is invariant under the action of $S$ and so if $E(S)$ is a generalized Boolean algebra, then $S$ acts on $\wh{E(S)}$.  See~\cite{Exel} for details.

If $S$ acts on a Boolean space $X$, the \emph{groupoid of germs} $\mathscr G=S\ltimes X$ of the action is described as follows.  The object space is $X$.  The arrow space $\mathscr G\skel 1$ consists of all equivalence classes of pairs $(s,x)$ with $x\in X_{s^*s}$ where $(s,x)\sim (t,y)$ if $x=y$ and there exists $u\in S$ with $x\in X_{u^*u}$ and $u\leq s,t$.  In other words, $s$ and $t$ have a common restriction belonging to $S$ and defined at $x$.  The class of $(s,x)$  is denoted by $[s,x]$.  A basis for the topology on $\mathscr G\skel 1$ is given by the sets
\[(s,U)=\{[s,x]\mid x\in U\}\] where $U\subseteq X_{s^*s}$.  The domain and range maps are given by $\dom([s,x])=x$ and $\ran([s,x])=sx$.  The product is given by $[s,tx][t,x]=[st,x]$ and the inverse is given by $[s,x]\inv = [s^*,sx]$.  The identity at $x$ is $[e,x]$ where $e$ is any idempotent with $x\in X_e$.  We note that it is not always the case that $S\ltimes X$ is Hausdorff.  The reader should consult~\cite{Exel} for details.  The groupoid $S\ltimes \mathrm{Spec}(S)$ is called the \emph{universal groupoid} of $S$.  Its algebra is isomorphic to the semigroup algebra of $S$~\cite{mygroupoidalgebra}.

If $\mathscr G$ is an ample groupoid, then it follows from~\cite[Prop.~5.4]{Exel} and Stone duality that $\mathscr G\cong \Bis_c(\mathscr G)\ltimes \wh{E(\Bis_c(\mathscr G))}$. See, for example,~\cite[Thm~4.8]{exelrecon} or~\cite{LawsonLenz} for details.  Consequently, ample groupoids $\mathscr G$ and $\mathscr G'$ are isomorphic if and only if the inverse semigroups $\Bis_c(\mathscr G)$ and $\Bis_c(\mathscr G')$ are isomorphic.

\subsection{Groupoid algebras}
If $\mathscr G$ is a Hausdorff ample groupoid and $R$ is a commutative ring with unit, then the \emph{groupoid algebra} $R\mathscr G$ has underlying $R$-module the space $C_c(\mathscr G\skel 1,R)$ of compactly supported locally constant mappings from $\mathscr G\skel 1$ to $R$.  The product is given by convolution
\[f\ast g(\gamma)= \sum_{\dom(\alpha)=\dom(\gamma)}f(\gamma\alpha\inv)g(\alpha).\]
Details can be found in~\cite{mygroupoidalgebra}.  In this paper, we shall write $fg$ as shorthand for $f\ast g$.  Note that $R\mathscr G$ is spanned by the characteristic functions of compact local bisections and if $U,V\in \Bis_c(\mathscr G)$, then $\chi_U\chi_V=\chi_{UV}$.  In fact, $R\mathscr G$ is the quotient of the semigroup algebra $R\Bis_c(\mathscr G)$ by the relations $\chi_U+\chi_V=\chi_{U\cup V}$ whenever $U,V$ are disjoint compact open subsets of $\mathscr G\skel 0$~\cite{mygroupoidarxiv,operatorguys1}.

If $X$ is a Boolean space, viewed as an ample groupoid consisting of identity morphisms, then the groupoid algebra is $C_c(X,R)$ with pointwise operations.  If we view a group $G$ as a one-object ample groupoid, then $RG$ is the usual group algebra.

If $c\colon \mathscr G\to G$ is a cocycle, then $R\mathscr G$ is a $G$-graded algebra where the homogeneous component $R\mathscr G_g$ of $g\in G$ consists of those mappings whose support is contained in the clopen set $\mathscr G_g$~\cite{operatorguys1}.

If $\mathscr H$ is an open subgroupoid of $\mathscr G$, then $R\mathscr H$ is a subalgebra of $R\mathscr G$ in a natural way (by extending a compactly supported mapping on $\mathscr H\skel 1$ to $\mathscr G\skel 1$ by $0$ at all undefined morphisms).  In particular $C_c(\mathscr G\skel 0,R)$ is naturally a subalgebra of $R\mathscr G$ that we call the \emph{diagonal subalgebra} and denote by $D_R(\mathscr G)$ or, if $R$ is understood, just $D(\mathscr G)$.   Note that in the graded setting, the diagonal subalgebra is contained in the homogeneous component of $1$.  If $H$ is a group, then the diagonal subalgebra of $RH$ consists of the scalar multiples of the identity.

An isomorphism $\Phi\colon R\mathscr G\to R\mathscr G'$ of groupoid algebras is called \emph{diagonal-preserving} if $\Phi(D(\mathscr G))=D(\mathscr G')$.  Note that if  $\mathscr G,\mathscr G'$ are $G$-graded groupoids and there is a graded isomorphism $\p\colon \mathscr G\to \mathscr G'$, then there is a diagonal-preserving graded isomorphism $R\mathscr G\to R\mathscr G'$ of $R$-algebras mapping $f$ to $f\circ \pinv$.

If $X$ is a closed invariant subspace of $\mathscr G\skel 0$ and $\mathscr G|_X$ is the (closed)  full subgroupoid of $\mathscr G$ with object set $X$, then restriction induces a homomorphism $R\mathscr G\to R\mathscr G|_X$.

\begin{Prop}\label{p:z.of.diag}
Let $\mathscr G$ be a Hausdorff ample groupoid and $R$ a commutative ring with unit.  Let $\mathscr H$ be the interior of the isotropy bundle of $\mathscr G$.  Then the centralizer of the diagonal subalgebra $D_R(\mathscr G)$ is $R\mathscr H$ (viewed as the subalgebra of $R\mathscr G$ consisting of those functions supported on $\mathscr H$).
\end{Prop}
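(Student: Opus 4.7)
The plan is to prove both containments of $Z_{R\mathscr G}(D_R(\mathscr G)) = R\mathscr H$ by reducing commutation with diagonal elements to a pointwise identity via the convolution formula.

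The first step I would take is to observe that $D_R(\mathscr G)$ is $R$-spanned by the characteristic functions $\chi_U$ with $U\subseteq\mathscr G\skel 0$ compact open, so commuting with $D_R(\mathscr G)$ reduces to commuting with each such $\chi_U$. For any $f\in R\mathscr G$, unwinding convolution gives
\[
(\chi_U f)(\gamma) = \chi_U(\ran(\gamma))\,f(\gamma), \qquad (f\chi_U)(\gamma) = f(\gamma)\,\chi_U(\dom(\gamma)),
\]
because a term $\chi_U(\gamma\alpha\inv)f(\alpha)$ vanishes unless $\gamma\alpha\inv$ is a unit (forcing $\alpha = \gamma$), and dually for $f(\gamma\alpha\inv)\chi_U(\alpha)$.

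For the inclusion $R\mathscr H\subseteq Z_{R\mathscr G}(D_R(\mathscr G))$, if $f$ is supported on $\mathscr H$ then $f(\gamma)=0$ unless $\dom(\gamma)=\ran(\gamma)$, in which case the two expressions above coincide. Hence $\chi_U f=f\chi_U$ for every compact open $U\subseteq\mathscr G\skel 0$, so $f$ centralizes $D_R(\mathscr G)$.

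For the reverse inclusion, suppose $f$ centralizes $D_R(\mathscr G)$. Subtracting the two displayed formulas yields
\[
[\chi_U(\ran(\gamma)) - \chi_U(\dom(\gamma))]\,f(\gamma) = 0
\]
for every compact open $U\subseteq\mathscr G\skel 0$ and every $\gamma\in\mathscr G\skel 1$. Since $\mathscr G\skel 0$ is a Boolean space, its compact open subsets separate points, so whenever $f(\gamma)\neq 0$ one must have $\dom(\gamma)=\ran(\gamma)$; thus the nonvanishing locus of $f$ is contained in the isotropy bundle. The key final observation is that because $f\in C_c(\mathscr G\skel 1,R)$ is locally constant, each preimage $f\inv(r)$ with $r\neq 0$ is open, and so the nonvanishing locus of $f$ is an \emph{open} subset of $\mathscr G\skel 1$ contained in the isotropy bundle, hence in its interior $\mathscr H$. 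Therefore $f\in R\mathscr H$.

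The only nontrivial step is this last one: merely knowing that $f$ is supported on the isotropy bundle is not enough, and one must upgrade support to an open containment using local constancy. Everything else is a direct bookkeeping exercise with the convolution formula, so I do not anticipate a serious obstacle.
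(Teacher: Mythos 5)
Your proof is correct and follows essentially the same route as the paper's: both reduce commutation with the diagonal to the pointwise identities $\chi_U f(\gamma)=\chi_U(\ran(\gamma))f(\gamma)$ and $f\chi_U(\gamma)=f(\gamma)\chi_U(\dom(\gamma))$, use that compact opens separate points of $\mathscr G\skel 0$, and (implicitly in the paper, explicitly in your write-up) use local constancy to upgrade ``supported in the isotropy bundle'' to ``supported in its interior.'' The only cosmetic difference is that you argue the reverse inclusion directly while the paper argues the contrapositive.
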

\begin{proof}
Suppose that $f\in R\mathscr H$ and $g\in D_R(\mathscr G)$. Then $fg(\alpha) = f(\alpha)g(\dom(\alpha))=g(\dom(\alpha))f(\alpha)$.  If $f(\alpha)\neq 0$, then $\alpha\in \mathscr H\skel 1$ and so $\ran(\alpha)=\dom(\alpha)$ and also $g(\dom(\alpha))f(\alpha)=g(\ran(\alpha))f(\alpha) = gf(\alpha)$. If $f(\alpha)=0$, then $gf(\alpha) = g(\ran(\alpha))f(\alpha)=0$.  So $fg(\alpha)=gf(\alpha)$ in all cases.

Conversely, if $f\notin R\mathscr H$, then there exists $\alpha\in \supp(f)$ with $\dom(\alpha)\neq \ran(\alpha)$.  Then we can find $U\subseteq \mathscr G\skel 0$ compact open with $\dom(\alpha)\in U$ and $\ran(\alpha)\notin U$.  Then $f\chi_U(\alpha) = f(\alpha)\neq 0$ and $\chi_Uf(\alpha) =\chi_U(\ran(\alpha))f(\alpha)=0$.  Thus $f$ does not centralize $D_R(\mathscr G)$.  This completes the proof.
\end{proof}

Since any isomorphism sends centralizers to centralizers, we obtain our first corollary of Proposition~\ref{p:z.of.diag}.

\begin{Cor}\label{c:isotropy.pres}
Let $\mathscr G$ and $\mathscr G'$ be ample Hausdorff groupoids and $R$ a commutative ring with unit.  If $\Phi\colon R\mathscr G\to R\mathscr G'$ is a diagonal-preserving ring isomorphism, then it restricts to a diagonal-preserving ring isomorphism of the algebra of the interior of the  isotropy bundle of $\mathscr G$ with that of the interior of the isotropy bundle of $\mathscr G'$.
\end{Cor}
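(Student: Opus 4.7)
The plan is to simply apply Proposition~\ref{p:z.of.diag} together with the general fact that any ring isomorphism sends centralizers to centralizers.

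In detail, Proposition~\ref{p:z.of.diag} identifies $R\mathscr H$ with the centralizer $Z_{R\mathscr G}(D_R(\mathscr G))$ of the diagonal subalgebra inside $R\mathscr G$, and analogously $R\mathscr H'$ with $Z_{R\mathscr G'}(D_R(\mathscr G'))$, where $\mathscr H'$ is the interior of the isotropy bundle of $\mathscr G'$. Since $\Phi$ is a ring isomorphism with $\Phi(D_R(\mathscr G)) = D_R(\mathscr G')$, and centralizers are clearly preserved by ring isomorphisms of the ambient algebra, we get
\[
\Phi(R\mathscr H) \;=\; \Phi\bigl(Z_{R\mathscr G}(D_R(\mathscr G))\bigr) \;=\; Z_{R\mathscr G'}\bigl(\Phi(D_R(\mathscr G))\bigr) \;=\; Z_{R\mathscr G'}(D_R(\mathscr G')) \;=\; R\mathscr H'.
\]
Thus $\Phi$ restricts to a ring isomorphism $R\mathscr H \to R\mathscr H'$.

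To check that the restriction is diagonal-preserving, note that $\mathscr G\skel 0$ is an open subset of $\mathscr G\skel 1$ consisting of identity morphisms, so $\mathscr G\skel 0 \subseteq \mathscr H\skel 1$; hence $D_R(\mathscr G) \subseteq R\mathscr H$ and similarly $D_R(\mathscr G') \subseteq R\mathscr H'$. Since $\Phi(D_R(\mathscr G)) = D_R(\mathscr G')$ by hypothesis, the restriction $\Phi|_{R\mathscr H}$ still sends the diagonal to the diagonal, completing the proof.

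There is no real obstacle here: the substance of the argument lies entirely in Proposition~\ref{p:z.of.diag}, and the only minor point worth noting is the inclusion $D_R(\mathscr G)\subseteq R\mathscr H$, which is needed to even formulate the statement that the restricted isomorphism is diagonal-preserving.
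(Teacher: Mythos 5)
Your proposal is correct and follows exactly the paper's own argument: the paper derives this corollary in one line from Proposition~\ref{p:z.of.diag} together with the observation that any ring isomorphism sends centralizers to centralizers. Your additional remark that $D_R(\mathscr G)\subseteq R\mathscr H$ (so the restriction remains diagonal-preserving) is a small point the paper leaves implicit, but it is the same proof.
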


The implication (1) implies (2) of the following corollary was proved in~\cite[Proposition~3.8]{groupoidprimitive}, although it should probably be considered folklore.

\begin{Cor}\label{c:effective.diag.det}
Let $\mathscr G$ be a Hausdorff ample groupoid and $R$ a commutative ring with unit.  Then the following are equivalent.
\begin{enumerate}
\item $\mathscr G$ is effective.
\item $D_R(\mathscr G)$ is a maximal commutative subring of $R\mathscr G$.
\item $D_R(\mathscr G)$ is its own centralizer in $R\mathscr G$.
\end{enumerate}
\end{Cor}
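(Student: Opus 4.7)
The plan is to derive both equivalences from Proposition~\ref{p:z.of.diag}, which identifies the centralizer of $D_R(\mathscr G)$ in $R\mathscr G$ with the subalgebra $R\mathscr H$, where $\mathscr H$ denotes the interior of the isotropy bundle of $\mathscr G$. With this identification in hand, condition (3) becomes the concrete equality $R\mathscr H = D_R(\mathscr G) = R\mathscr G\skel 0$ inside $R\mathscr G$, and the rest is essentially bookkeeping plus a standard ring-theoretic equivalence.

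First I would handle (1) $\iff$ (3). One direction is immediate: if $\mathscr G$ is effective, then $\mathscr H\skel 1 = \mathscr G\skel 0$, so $R\mathscr H = R\mathscr G\skel 0 = D_R(\mathscr G)$. For the converse, suppose $R\mathscr H = R\mathscr G\skel 0$. Since $\mathscr H$ is an open Hausdorff ample subgroupoid of $\mathscr G$ containing $\mathscr G\skel 0 = \mathscr H\skel 0$ as a clopen subspace (as always holds in a Hausdorff \'etale groupoid), any $\gamma\in \mathscr H\skel 1\setminus \mathscr G\skel 0$ would lie in some compact open bisection $U$ of $\mathscr H$ disjoint from $\mathscr G\skel 0$; then $\chi_U$ would belong to $R\mathscr H$ but not to $R\mathscr G\skel 0$, a contradiction. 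Hence $\mathscr H\skel 1 = \mathscr G\skel 0$, which is the definition of effective.

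For (2) $\iff$ (3), I would invoke the elementary observation that a commutative unital subring $A$ of a ring $B$ is maximal commutative in $B$ if and only if $A$ equals its own centralizer in $B$. Indeed, if $A$ equals its own centralizer and $A'\supseteq A$ is commutative in $B$, then every element of $A'$ centralizes $A$, forcing $A'\subseteq A$; conversely, if $A$ is maximal commutative and $x\in B$ centralizes $A$, then the subring $A[x]$ generated by $A$ and $x$ is commutative (its generators pairwise commute), and maximality gives $x\in A$. Applied to $A = D_R(\mathscr G)$ and $B = R\mathscr G$, this yields the equivalence.

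There is no substantial obstacle once Proposition~\ref{p:z.of.diag} is available. The only sliver of care is the implication $R\mathscr H = R\mathscr G\skel 0 \Rightarrow \mathscr H\skel 1 = \mathscr G\skel 0$ in the first step, which relies on Hausdorffness and the basis of compact open bisections to separate a non-unit arrow from the unit space at the level of characteristic functions; everything else is either a direct invocation of the proposition or a general fact about centralizers.
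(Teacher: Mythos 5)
Your proof is correct and rests on the same pillar as the paper's: Proposition~\ref{p:z.of.diag} identifies the centralizer of $D_R(\mathscr G)$ with $R\mathscr H$, and the key step in both arguments is that an arrow of $\mathscr H$ outside $\mathscr G\skel 0$ yields a compact open set $U\subseteq \mathscr H\skel 1$ with $\chi_U$ centralizing $D_R(\mathscr G)$ but not lying in it. The only difference is organizational (you prove $(1)\Leftrightarrow(3)$ and $(2)\Leftrightarrow(3)$, the latter via the standard ``maximal commutative iff self-centralizing'' fact, whereas the paper runs the cycle $(1)\Rightarrow(3)\Rightarrow(2)\Rightarrow(1)$), which is immaterial.
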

\begin{proof}
If $\mathscr G$ is effective, then $D_R(\mathscr G)$ is its own centralizer by Proposition~\ref{p:z.of.diag}.  If $D_R(\mathscr G)$ is its own centralizer, then clearly it is a maximal commutative subring.  Suppose that $D_R(\mathscr G)$ is a maximal commutative subring and let suppose that $\gamma\in \mathscr G\skel 1$ belongs to the interior of the isotropy bundle.  Then there is a compact open neighborhood $U$ of $\gamma$ contained in the interior of the isotropy bundle.  But then $\chi_U$ centralizes $D_R(\mathscr G)$ by Proposition~\ref{p:z.of.diag} and so the subring $D_R(\mathscr G)[\chi_U]$ generated by $\chi_U$ and $D_R(\mathscr G)$ is a commutative subring containing $D_R(\mathscr G)$.  By maximality, we conclude that $\chi_U\in D_R(\mathscr G)$ and hence $\gamma\in \mathscr G\skel 0$.  Thus $\mathscr G$ is effective.
\end{proof}

An immediate consequence of Corollary~\ref{c:effective.diag.det} is our next corollary.

\begin{Cor}\label{c:eff.iff}
Let $\mathscr G,\mathscr G'$ be Hausdorff ample groupoids and let $R$ be a commutative ring.  Suppose that $\Phi\colon R\mathscr G\to R\mathscr G'$ is a diagonal-preserving ring isomorphism.  Then $\mathscr G$ is effective if and only if $\mathscr G'$ is effective.
\end{Cor}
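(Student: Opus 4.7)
The plan is to use Corollary \ref{c:effective.diag.det} directly, together with the fact that ring isomorphisms transport maximal commutative subrings to maximal commutative subrings. Since $\Phi$ is diagonal-preserving, by hypothesis $\Phi(D_R(\mathscr G))=D_R(\mathscr G')$. Because maximal commutativity is purely a ring-theoretic property of a subring inside an ambient ring, it is invariant under ring isomorphism: if $A\subseteq B$ is a maximal commutative subring and $\Psi\colon B\to B'$ is an isomorphism, then $\Psi(A)\subseteq B'$ is also maximal commutative.

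First I would state this standard fact explicitly (or simply invoke it) and apply it to $\Phi$ restricted to the image of $D_R(\mathscr G)$. Then by Corollary \ref{c:effective.diag.det}, $\mathscr G$ is effective if and only if $D_R(\mathscr G)$ is a maximal commutative subring of $R\mathscr G$, if and only if $D_R(\mathscr G')=\Phi(D_R(\mathscr G))$ is a maximal commutative subring of $R\mathscr G'$, if and only if $\mathscr G'$ is effective. Equivalently, one could phrase this via centralizers: $\Phi$ sends the centralizer of $D_R(\mathscr G)$ in $R\mathscr G$ isomorphically onto the centralizer of $D_R(\mathscr G')$ in $R\mathscr G'$, so $D_R(\mathscr G)$ equals its own centralizer exactly when $D_R(\mathscr G')$ does, and we again invoke Corollary \ref{c:effective.diag.det}.

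There is essentially no obstacle: the result is a one-line corollary of Corollary \ref{c:effective.diag.det} plus the observation that the characterizations of effectiveness given there depend only on the isomorphism type of the pair $(R\mathscr G, D_R(\mathscr G))$. The whole proof should fit in a couple of sentences.
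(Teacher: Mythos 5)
Your proposal is correct and is exactly the argument the paper intends: the paper derives this corollary as an ``immediate consequence'' of Corollary~\ref{c:effective.diag.det}, relying precisely on the fact that the characterization of effectiveness there (the diagonal being maximal commutative, equivalently its own centralizer) depends only on the isomorphism type of the pair $(R\mathscr G, D_R(\mathscr G))$, which a diagonal-preserving ring isomorphism preserves.
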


\subsection{Distributive inverse semigroups}\label{ss:distributive}
Let $S$  be an inverse semigroup.  Then $s,t\in S$ are said to be \emph{compatible}~\cite{Lawson} if $st^*,t^*s\in E(S)$.  Notice that if $s,t$ have a common upper bound in $S$, then they are compatible.  A subset of $S$  is \emph{compatible} if each pair of its elements is compatible.  Compatible subsets are precisely those which can potentially have a join.

An inverse semigroup $S$ is \emph{distributive} if it admits joins of compatible pairs and products in $S$ distribute over joins.  It is easy to see that if an inverse semigroup is distributive, then it admits joins of any finite compatible set.
Of course, any isomorphism of distributive inverse semigroups preserves the join and the class of distributive inverse semigroups is closed under isomorphism.  Also, note that any homomorphism of inverse semigroups preserves the compatibility relation. Details can be found, for example, in~\cite{LawsonLenz}.

Let $\mathscr G$ be a Hausdorff ample groupoid.  The reader should check that $U,V\in \Bis_c(\mathscr G)$ are compatible if and only if $U\cup V$ is a local bisection. Hence $\Bis_c(\mathscr G)$ is a distributive inverse semigroup with union as the join.

If $S$ is a distributive inverse semigroup and $T\leq S$ is a full inverse subsemigroup which is also distributive, then $T$ is closed under all joins of finite compatible subsets.  Indeed, if $A\subseteq T$ is finite and compatible and $s,t$ are the joins of $A$ in $S$ and $T$, respectively, then $s\leq t$ and so $s\in T$ because $T$ is an order ideal.  Thus $s=t$.

\section{Graded inverse semigroups and groupoids of germs}

Let $S$ be an inverse semigroup with zero and $G$ a group.  A \emph{partial homomorphism} $\theta\colon S\to G$ is a mapping $\theta\colon S\setminus \{0\}\to G$ (abusing notation) such that $\theta(st)=\theta(s)\theta(t)$ whenever $st\neq 0$.

\begin{Prop}\label{p:partial.group}
Let $\theta\colon S\to G$ be a partial homomorphism.
\begin{enumerate}
	\item $\theta(e)=1$ for all non-zero idempotents $e$.
	\item $\theta(s^*)=\theta(s)\inv$ for $s\in S\setminus \{0\}$.
	\item $0\neq s\leq t$ implies $\theta(s)=\theta(t)$.
\end{enumerate}
\end{Prop}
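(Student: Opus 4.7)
The three statements are really just unwinding of the definition, so the plan is short and direct; the only thing to watch is that each application of the partial homomorphism rule $\theta(st)=\theta(s)\theta(t)$ is justified by checking $st \neq 0$.

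First I would prove (1). Since $e$ is a non-zero idempotent, $ee = e \neq 0$, so the partial homomorphism rule applies and gives $\theta(e) = \theta(ee) = \theta(e)\theta(e)$ in $G$. Cancelling in the group forces $\theta(e) = 1$.

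For (2), the key observation is that $ss^*s = s \neq 0$, which forces both $ss^*$ and $s^*s$ to be non-zero (a product containing a zero factor would be zero). In particular $ss^* \neq 0$, so the rule gives $\theta(ss^*) = \theta(s)\theta(s^*)$. Since $ss^*$ is a non-zero idempotent, part (1) gives $\theta(ss^*) = 1$, whence $\theta(s^*) = \theta(s)^{-1}$.

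For (3), write $s = te$ with $e \in E(S)$. Since $s = te \neq 0$, the rule yields $\theta(s) = \theta(t)\theta(e)$; note $e\neq 0$ is automatic because $te=s \neq 0$. By part (1), $\theta(e) = 1$, so $\theta(s) = \theta(t)$.

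The hard part is essentially vacuous here: the only subtlety is remembering that the partial homomorphism identity is only asserted on pairs whose product is non-zero, so each use must be preceded by a brief non-vanishing check. Once (1) is in hand, both (2) and (3) follow in a single line each.
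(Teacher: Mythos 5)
Your proof is correct and follows exactly the same route as the paper: deduce (1) from $\theta(e)=\theta(e)\theta(e)$, then apply it to $ss^*$ for (2), and to the factorization $s=te$ for (3). The extra care you take in checking that each relevant product is non-zero is a welcome (if minor) refinement of the paper's argument.
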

\begin{proof}
If $0\neq e=e^2$, then $\theta(e)=\theta(e)\theta(e)$ and so $\theta(e)=1$.  Hence, if $s\neq 0$, then $\theta(s)\theta(s^*)=\theta(ss^*)=1$ and so $\theta(s^*)=\theta(s)\inv$.  Finally, if $0\neq s\leq t$, then $s=te$ with $0\neq e$ idempotent and so $\theta(s)=\theta(t)\theta(e)=\theta(t)$.  This completes the proof.
\end{proof}

A \emph{$G$-graded inverse semigroup} is an inverse semigroup equipped with a partial homomorphism $\theta\colon S\to G$. For $g\in G$, put $S_g=\theta\inv(g)\cup \{0\}$.  We sometimes call $\theta$ a \emph{grading} of $S$.  A \emph{graded homomorphism} of $G$-graded inverse semigroups is a zero-preserving homomorphism respecting the grading.

Our main example of a $G$-graded inverse semigroup comes from a $G$-graded ample groupoid $c\colon \mathscr G\to G$.  Then the inverse semigroup of homogeneous compact local bisections $\Bis_c^h(\mathscr G)$ is a $G$-graded inverse semigroup with respect to the obvious grading induced by $c$.

Note that any inverse subsemigroup with zero of a $G$-graded inverse semigroup inherits a $G$-grading.

If $S$ is a $G$-graded inverse semigroup acting on a Boolean space $X$, then the groupoid of germs $\mathscr G=S\ltimes X$ is naturally $G$-graded via the cocycle $c([s,x])=\theta(s)$ where $\theta$ is the grading of $S$.  This is well defined because if $[s,x]=[t,x]$, then there exists $0\neq u\leq s,t$ and so $\theta(s)=\theta(u)=\theta(t)$ by Proposition~\ref{p:partial.group}.  It is a cocycle since  $[s,tx][t,x]=[st,x]$ and $\theta(st)=\theta(s)\theta(t)$ (here we are using that $0$ maps to the empty map and hence $st\neq 0$).  Continuity follows because $c\inv(g) = \bigcup_{s\in \theta\inv(g)}(s,X_{s^*s})$, which is open.

We shall require a condition that guarantees that if $T\leq S$ is an inverse subsemigroup, then the groupoid of germs for the action of $S$ on $X$ is isomorphic to the groupoid of germs for the restricted action of $T$ on $X$.

If $\p\colon S\to I_X$ is an action on a Boolean space, we say that $T\leq S$ is \emph{cofinal} with respect to the action if, for all $x\in X$ and $s\in S$ with $x\in X_{s^*s}$, there exists $t\in T$ such that $t\leq s$ and $x\in X_{t^*t}$.

\begin{Prop}\label{p:full.cofinal}
Let $S$ be a $G$-graded inverse semigroup acting on a Bool\-e\-an space $X$.  Suppose that $T\leq S$ is a full inverse subsemigroup that is  cofinal with respect to the action.  Then the groupoids of germs $T\ltimes X$ and $S\ltimes X$ are isomorphic as $G$-graded group\-oids.
\end{Prop}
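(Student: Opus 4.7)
The plan is to show that the inclusion $T\hookrightarrow S$ induces a $G$-graded isomorphism of groupoids of germs $\Phi\colon T\ltimes X \to S\ltimes X$ defined on arrows by $\Phi([t,x]_T) = [t,x]_S$, and on objects by the identity. The grading is preserved automatically since both germ groupoids are graded by $c([s,x]) = \theta(s)$, and $T$ inherits its grading by restriction of $\theta$. So the work is to check that $\Phi$ is a well-defined continuous functor which is bijective and open.

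First, well-definedness. If $[t,x]_T=[t',x]_T$, there is $u\in T$ with $u\leq t,t'$ and $x\in X_{u^*u}$; viewing $u$ in $S$ gives $[t,x]_S=[t',x]_S$. That $\Phi$ is an identity-on-objects functor preserving composition and inversion is immediate from the formulas for germ multiplication. Note that $T\ltimes X$ has object space $X$ (i.e., the action of $T$ on $X$ is non-degenerate) precisely because $T$ is full: $\bigcup_{e\in E(T)}X_e=\bigcup_{e\in E(S)}X_e=X$.

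Next, bijectivity, which is exactly where cofinality enters. For surjectivity, given $[s,x]_S$ with $x\in X_{s^*s}$, cofinality provides $t\in T$ with $t\leq s$ and $x\in X_{t^*t}$; then $\Phi([t,x]_T)=[t,x]_S=[s,x]_S$. For injectivity, suppose $[t,x]_S=[t',x']_S$ with $t,t'\in T$. Then $x=x'$ and there is $u\in S$ with $u\leq t,t'$ and $x\in X_{u^*u}$; by cofinality there is $u'\in T$ with $u'\leq u$ and $x\in X_{u'^*u'}$, whence $u'\leq t,t'$ witnesses $[t,x]_T=[t',x]_T$.

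Finally, $\Phi$ is a homeomorphism. Openness is trivial because the basic open set $(t,V)$ of $T\ltimes X$ with $t\in T$, $V\subseteq X_{t^*t}$ compact open, maps to the basic open set $(t,V)$ of $S\ltimes X$. For continuity, fix a basic open $(s,U)$ in $S\ltimes X$ and a point $[t,x]_T\in \Phi^{-1}((s,U))$. Then $[t,x]_S=[s,x]_S$ gives $w\in S$ with $w\leq s,t$ and $x\in X_{w^*w}$, and cofinality then yields $w'\in T$ with $w'\leq w$ and $x\in X_{w'^*w'}$. Then $w'\leq t$, so $[w',x]_T=[t,x]_T$, and the open neighborhood $(w', X_{w'^*w'}\cap U)$ of $[t,x]_T$ lies in $\Phi^{-1}((s,U))$ since $w'\leq s$. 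The only conceptual obstacle is keeping straight which ambient semigroup the equivalence class is taken in, but cofinality is exactly the mechanism that allows passage between the two notions in both directions.
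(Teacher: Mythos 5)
Your proof is correct and follows essentially the same route as the paper: the same germ-class map, with cofinality driving surjectivity and the same verification of continuity, openness, and grading. The only cosmetic difference is in injectivity, where the paper observes that the common lower bound $u$ already lies in $T$ because a full inverse subsemigroup is an order ideal, while you invoke cofinality a second time to descend from $u$ to some $u'\in T$ --- both arguments are valid.
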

\begin{proof}
Note that since $E(S)=E(T)$, the restriction of the action to $T$ is non-degenerate.
We define a functor $\rho\colon T\ltimes X\to S\ltimes X$ by the identity on objects and by $\rho([t,x]_T)=[t,x]_S$ where we use $[u,y]_R$ to denote the germ class of $(u,y)$ for the inverse semigroup $R$.  The mapping $\rho$ is clearly well defined.  To see that $\rho$ is injective, suppose $[t,x]_S=[t',x]_S$ with $t,t'\in T$.  Then there exists $u\in S$ with $u\leq t,t'$  and $x\in X_{u^*u}$.  But $T$ is an order ideal, so $u\in T$ and hence $[t,x]_T=[t',x]_T$.  Also, $\rho$ is surjective since if $[s,x]_S$ is an arrow of $S\ltimes X$, then by cofinality, there exists $t\in T$ with $t\leq s$ and $x\in X_{t^*t}$.  Thus $[s,x]_S=[t,x]_S=\rho([t,x]_T)$ and so $\rho$ is onto.  Clearly $\rho$ is a functor.  It is continuous because $\rho\inv(s,U) = \bigcup_{t\in T, t\leq s}(t,U\cap X_{t^*t})$.  It is open because $\rho(t,U) = (t,U)$ for $U\subseteq X_{t^*t}$.  Clearly $\rho$ preserves the grading.  This completes the proof.
\end{proof}

As a consequence, we show that if $\mathscr G$ is a $G$-graded ample groupoid, then $\mathscr G\cong \Bis_c^h(\mathscr G)\ltimes \wh{E(\Bis_c^h(\mathscr G))}$ as $G$-graded groupoids.

\begin{Thm}\label{t:graded.reconstruct.inv.sgp}
Let $c\colon \mathscr G\to G$ be a $G$-graded ample groupoid.  Then $\mathscr G\cong \Bis_c^h(\mathscr G)\ltimes \wh{E(\Bis_c^h(\mathscr G))}$ as $G$-graded groupoids.
\end{Thm}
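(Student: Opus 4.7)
The plan is to deduce this from the ungraded reconstruction $\mathscr G \cong \Bis_c(\mathscr G) \ltimes \wh{E(\Bis_c(\mathscr G))}$ (already cited via \cite{Exel,exelrecon,LawsonLenz}) by showing that the homogeneous compact local bisections form a full, cofinal inverse subsemigroup and invoking Proposition \ref{p:full.cofinal}, then checking compatibility of gradings.

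First I would verify fullness. Every idempotent of $\Bis_c(\mathscr G)$ is a compact open subset of $\mathscr G\skel 0$, and since $c$ is a functor it sends identity morphisms to $1 \in G$. Hence every idempotent lies in $c\inv(1)$ and is homogeneous, giving $E(\Bis_c^h(\mathscr G)) = E(\Bis_c(\mathscr G))$. By Stone duality the same Boolean space $X := \wh{E(\Bis_c(\mathscr G))} = \wh{E(\Bis_c^h(\mathscr G))}$ carries the spectral action of both inverse semigroups, with $X$ naturally identified with $\mathscr G\skel 0$.

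Next I would check cofinality. Let $U \in \Bis_c(\mathscr G)$ and pick a character $\tau_x \in X_{U^*U}$, corresponding under Stone duality to a point $x \in \dom(U)$. Let $\alpha \in U$ be the unique arrow with $\dom(\alpha) = x$, and set $g = c(\alpha)$. Because $c$ is continuous, $c\inv(g)$ is open in $\mathscr G\skel 1$, so $U \cap c\inv(g)$ is an open neighborhood of $\alpha$. Since $\mathscr G$ is ample, $\Bis_c(\mathscr G)$ is a basis for the topology on $\mathscr G\skel 1$, so there exists $V \in \Bis_c(\mathscr G)$ with $\alpha \in V \subseteq U \cap c\inv(g)$. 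Then $V$ is homogeneous, $V \leq U$ (the order on compact local bisections being containment), and $x \in \dom(V)$, so $\tau_x \in X_{V^*V}$. This verifies cofinality.

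Proposition \ref{p:full.cofinal} (applied, as in its proof, to the inclusion $\Bis_c^h(\mathscr G) \hookrightarrow \Bis_c(\mathscr G)$ acting on $X$) then yields a functorial homeomorphism $\Bis_c^h(\mathscr G) \ltimes X \cong \Bis_c(\mathscr G) \ltimes X \cong \mathscr G$, which is the isomorphism of groupoids we want.

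Finally I would verify that this isomorphism preserves the $G$-grading. Under the composed isomorphism, an arrow $\alpha \in \mathscr G\skel 1$ corresponds to a germ $[V, x]$ where $V \in \Bis_c^h(\mathscr G)$ contains $\alpha$ and $x = \dom(\alpha)$; by cofinality and Proposition \ref{p:partial.group}, any two choices of $V$ have a common homogeneous refinement, so the grading is well defined. Since $V \subseteq c\inv(g)$ for $g = c(\alpha)$, the $G$-grading $\theta(V) = g$ induced on the germ groupoid (as discussed following the definition of $G$-graded inverse semigroups) matches the original cocycle $c$ on $\mathscr G$. I expect no real obstacles here: the only subtlety is that Proposition \ref{p:full.cofinal} is stated for $S$ itself $G$-graded, whereas $\Bis_c(\mathscr G)$ has no natural $G$-grading, but the relevant part of its proof (that the identity-on-objects map $[t,x]_T \mapsto [t,x]_S$ is a continuous open bijective functor) only uses fullness and cofinality, so it transfers unchanged.
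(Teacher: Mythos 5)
Your proposal is correct and follows essentially the same route as the paper: fullness of $\Bis_c^h(\mathscr G)$ via the identities lying in $\mathscr G_1$, cofinality via continuity of $c$ and the basis of compact local bisections, Proposition~\ref{p:full.cofinal} to identify the germ groupoids, and a direct check that an arrow $\gamma$ maps to $[V,\tau_{\dom(\gamma)}]$ with $V$ homogeneous of degree $c(\gamma)$. Your closing remark — that $\Bis_c(\mathscr G)$ carries no natural $G$-grading and only the ungraded content of Proposition~\ref{p:full.cofinal} is used — is a point the paper passes over silently but handles the same way, by verifying the grading on the composite isomorphism at the end.
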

\begin{proof}
As mentioned earlier $\mathscr G\cong \Bis_c(\mathscr G)\ltimes \wh{E(\Bis_c(\mathscr G))}$; see~\cite[Prop.~5.4]{Exel} and~\cite[Thm~4.8]{exelrecon}.  The isomorphism takes an arrow $\gamma\colon x\to y$ to $[U,\tau_x]$ where $U$ is any compact local bisection containing $\gamma$. Since the identities of $\mathscr G$ all belong to $\mathscr G_1$, it follows that $\Bis_c^h(\mathscr G)$ is a full inverse subsemigroup of $\Bis_c(\mathscr G)$.   To see that it is cofinal with respect to the action on $\wh{E(\Bis_c(\mathscr G))}$, let $x\in \mathscr G\skel 0$ and $U\in \Bis_c(\mathscr G)$ with $x\in \dom(U)$, i.e., $\tau_x(U\inv U)=1$. Let $\gamma\in U$ with $\dom(\gamma)=x$ and put $g=c(\gamma)$.  Then, as $c\inv(g)$ is open, there is a compact local bisection $V$ with $\gamma\in V\subseteq U$ and $V\subseteq c\inv(g)$.  Then $V\in \Bis_c^h(\mathscr G)$, $V\leq U$ and $x\in \dom(V)$ (i.e., $\tau_x(V)=1$).  Thus $\Bis^h_c(\mathscr G)$ is cofinal in $\Bis_c(\mathscr G)$ with respect to the action and so $\Bis_c(\mathscr G)\ltimes \wh{E(\Bis_c(\mathscr G))}\cong \Bis_c^h(\mathscr G)\ltimes \wh{E(\Bis_c^h(\mathscr G))}$ by Proposition~\ref{p:full.cofinal}.  It remains to show that the isomorphism is $G$-graded.

Indeed, if $\gamma\colon x\to y$ with $c(\gamma)=g$, then the above argument shows that we can find $V\in \Bis^h_c(\mathscr G)$ with $\gamma\in V$ and $V\subseteq c\inv(g)$.  Then $\gamma\mapsto [V,\tau_x]$ under the isomorphism $\mathscr G\to \Bis_c^h(\mathscr G)\ltimes \wh{E(\Bis_c^h(\mathscr G))}$ (using the proof of Proposition~\ref{p:full.cofinal}).  It follows that the isomorphism preserves the grading.
\end{proof}

Consequently, to reconstruct $\mathscr G$ as a $G$-graded groupoid, it suffices to reconstruct $\Bis_c^h(\mathscr G)$ as a $G$-graded inverse semigroup.

\section{The normalizer of the diagonal}
In this section, we generalize the notion of having no non-trivial units from group algebras to ample groupoid algebras.

\subsection{The local bisection hypothesis}\label{ss:local.bis}
Let $R$ be a commutative ring with unit and $\mathscr G$ a Hausdorff ample $G$-graded groupoid with cocycle $c\colon \mathscr G\to G$.     For $g\in G$, let
\begin{equation}\label{eq:normalizer}
\begin{split}
N_g = \{m\in R\mathscr G_g\mid\,\, & \exists m'\in R\mathscr G_{g\inv}, mm'm=m, m'mm'=m',  \\ & mD(\mathscr G)m'\cup m'D(\mathscr G)m\subseteq D(\mathscr G)\}.
\end{split}
\end{equation}

We call $N=\bigcup_{g\in G} N_g$ the \emph{(graded) normalizer} of $D(\mathscr G)$.  Observe that if $m\in N_g$ and $m'$ is as in \eqref{eq:normalizer}, then $m'\in N_{g\inv}$.  Trivially, if $U\subseteq \mathscr G_g$ is a homogeneous compact local bisection, then $\chi_U\in N_g$ with $\chi_U'=\chi_{U\inv}$.   Our first goal is to show that $N$ is a $G$-graded inverse semigroup.

\begin{Prop}\label{p:are.idem}
If $m\in N_g$ and $m'$ is as in \eqref{eq:normalizer}, then $m'm,mm'$ are idempotents of $D(\mathscr G)$.
\end{Prop}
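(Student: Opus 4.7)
My plan breaks into two pieces: idempotency, which is pure algebra from the defining relations, and membership in $D(\mathscr G)$, which is where the normalizer hypothesis has to enter. For the first, I would observe that $(mm')^2=(mm'm)m'=mm'$ using $mm'm=m$, and symmetrically $(m'm)^2=m'(mm'm)=m'm$. So the only real content is showing these two idempotents are diagonal.

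The main idea is to produce a ``local unit'' for $m$ inside $D(\mathscr G)$ to compensate for the lack of a global unit in the diagonal. Since $m$ is compactly supported and $\mathscr G\skel 0$ is a Boolean space, the compact set $\ran(\supp(m))\subseteq\mathscr G\skel 0$ is contained in some compact open $V\subseteq\mathscr G\skel 0$; the convolution formula then gives $\chi_V\cdot m=m$, and therefore $\chi_V\cdot(mm')=mm'$. Combined with idempotency this yields
\[mm'=(mm')\chi_V(mm')=m(m'\chi_V m)m'.\]
Now the normalizer hypothesis applies twice: first $m'\chi_V m\in m'D(\mathscr G)m\subseteq D(\mathscr G)$, and then $m\cdot(m'\chi_V m)\cdot m'\in mD(\mathscr G)m'\subseteq D(\mathscr G)$, so $mm'\in D(\mathscr G)$.

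The case $m'm\in D(\mathscr G)$ is handled by symmetry. As already observed immediately before the proposition, $m'\in N_{g\inv}$ with $m$ playing the role of $(m')'$, so the same argument with $m$ and $m'$ interchanged (choosing a compact open $W\subseteq\mathscr G\skel 0$ containing $\ran(\supp(m'))$) shows $m'm\in D(\mathscr G)$. I do not anticipate any real obstacle here; the only step warranting care is verifying $\chi_V\cdot m=m$ as an equality on all of $\mathscr G\skel 1$, not just on $\supp(m)$, but this is immediate from $(\chi_V\ast m)(\gamma)=\chi_V(\ran(\gamma))\,m(\gamma)$.
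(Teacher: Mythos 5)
Your proof is correct and uses essentially the same idea as the paper: the paper inserts $\chi_K$ with $K=\dom(\supp(m'))\cup\ran(\supp(m))$ directly between the factors to get $m'm=m'\chi_K m\in m'D(\mathscr G)m\subseteq D(\mathscr G)$ in a single application of the normalizer condition, whereas you reach the same conclusion by sandwiching via idempotency and applying the condition twice. The extra care you take with $\chi_V\ast m=m$ is fine but not a genuine divergence.
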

\begin{proof}
They are clearly idempotents.  Let $K=\dom(\supp(m'))\cup \ran(\supp(m))$.  Note that $\chi_K\in D(\mathscr G)$ and so $m'm=m'\chi_K m\in D(\mathscr G)$. Similarly, $mm'\in D(\mathscr G)$.
\end{proof}

Recall that a semigroup $S$ is an inverse semigroup if and only if it is von Neumann regular with commuting idempotents.

\begin{Prop}\label{p:is.inverse}
We have that $N$ is a $G$-graded inverse semigroup, where the grading sends a non-zero element of $N_g$ to $g$.
\end{Prop}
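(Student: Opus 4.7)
I would verify that $N$ is an inverse semigroup by checking the usual characterization: $N$ is closed under multiplication, is von Neumann regular, and has commuting idempotents. I would separately verify that the proposed grading $\theta$ is a well-defined partial homomorphism. For the commutativity of idempotents, I would prove the stronger statement that every idempotent of $N$ already lies in $D(\mathscr G)$, whence commutativity is automatic.

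\emph{Closure and grading.} Suppose $m\in N_g$ with witness $m'$ and $n\in N_h$ with witness $n'$. The claim is that $mn\in N_{gh}$ with witness $n'm'$. The homogeneity $mn\in R\mathscr G_{gh}$ and $n'm'\in R\mathscr G_{(gh)\inv}$ is automatic since $R\mathscr G$ is $G$-graded. The key regularity identity
\[(mn)(n'm')(mn)=m(nn')(m'm)n=m(m'm)(nn')n=(mm'm)(nn'n)=mn\]
uses that $m'm,nn'\in D(\mathscr G)$ (by Proposition~\ref{p:are.idem}) commute; the analogous computation yields $(n'm')(mn)(n'm')=n'm'$. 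The conjugation condition is immediate: $(mn)D(\mathscr G)(n'm')=m\bigl(nD(\mathscr G)n'\bigr)m'\subseteq mD(\mathscr G)m'\subseteq D(\mathscr G)$, and symmetrically. Hence $N$ is multiplicatively closed. Because $R\mathscr G$ is $G$-graded as an algebra, no nonzero element lies in two distinct $R\mathscr G_g$, so the proposed grading is well-defined and is a partial homomorphism by construction. Regularity is built into the definition of $N$, with $m'\in N_{g\inv}$ serving as a regular inverse of $m$.

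\emph{Idempotents lie in $D(\mathscr G)$.} A nonzero idempotent $m\in N_g$ satisfies $m=m^2\in R\mathscr G_g\cap R\mathscr G_{g^2}$, which by the grading forces $g=1$. Fix any witness $m'$ and set $e=m'm,\,f=mm'\in D(\mathscr G)$ (idempotents by Proposition~\ref{p:are.idem}); write $e=\chi_E$ and $f=\chi_F$ for compact open $E,F\subseteq\mathscr G\skel 0$. The pivotal computation exploits $m^2=m$:
\[m'=m'mm'=m'(m^2)m'=(m'm)(mm')=ef\in D(\mathscr G),\]
so $m'=\chi_{E\cap F}$ is itself a diagonal characteristic function. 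With $m'$ supported on $\mathscr G\skel 0$, the convolution collapses to $(mm')(\gamma)=m(\gamma)\,m'(\dom\gamma)$, while $m=me$ gives $\supp(m)\subseteq\dom\inv(E)$. Matching $mm'=\chi_F$ at a point $\gamma\in F\subseteq\mathscr G\skel 0$ yields $\gamma\in E\cap F$ and $m(\gamma)=1$, so $F\subseteq E$; the symmetric argument with $m'm=\chi_E$ gives $E\subseteq F$, hence $E=F$. Finally, for any $\gamma\in\supp(m)\setminus\mathscr G\skel 0$ we have $\dom\gamma\in E=E\cap F$, so $m'(\dom\gamma)=1$, and then $(mm')(\gamma)=\chi_F(\gamma)=0$ becomes $m(\gamma)=0$, a contradiction. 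Hence $\supp(m)\subseteq\mathscr G\skel 0$ and $m\in D(\mathscr G)$.

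\emph{Main obstacle.} The step I expect to be the main hurdle is the slick identity $m'=ef$, which drags $m'$ into the diagonal; once this is in hand, the remaining pointwise convolution analysis is routine and, importantly, requires no integral-domain or reducedness hypothesis on $R$. This is precisely what underpins the paper's promise of a ``simple, direct'' proof.
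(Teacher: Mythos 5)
Your closure, regularity, and conjugation computations for $mn$ with witness $n'm'$ match the paper's proof essentially verbatim, and the reduction of nonzero idempotents to $N_1$ is fine. The gap is in the idempotent step. Proposition~\ref{p:is.inverse} is stated for an arbitrary commutative ring with unit $R$; the standing assumption that $R$ is indecomposable is imposed only \emph{after} this proposition, and it is precisely what makes Proposition~\ref{p:idempotents} true. Your line ``write $e=\chi_E$ and $f=\chi_F$ for compact open $E,F\subseteq\mathscr G\skel 0$'' therefore assumes what is not available: an idempotent of $D(\mathscr G)=C_c(\mathscr G\skel 0,R)$ is a locally constant function valued in $E(R)$, and it is a characteristic function only when $E(R)=\{0,1\}$. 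Everything downstream --- ``$m(\gamma)=1$'', ``$m'(\dom\gamma)=1$'', deducing $F\subseteq E$ from membership in $E\cap F$ --- uses this $\{0,1\}$-valuedness and breaks for, say, $R=\mathbb Z\times\mathbb Z$. So as written your argument establishes the proposition only for indecomposable $R$, and your closing claim that no hypothesis on $R$ is needed is not borne out by the proof.

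The paper sidesteps this entirely by staying algebraic: for an idempotent $e\in N$ with witness $e'$ it computes
\[
e'=e'ee'=(e'e)(ee')=(ee')(e'e)=ee(e'e')ee=ee=e,
\]
using Proposition~\ref{p:are.idem}, commutativity of $D(\mathscr G)$, and the already-established identity $(mn)(n'm')(mn)=mn$ with $m=n=e$; then $e=ee'\in D(\mathscr G)$ with no hypothesis whatsoever on $R$. Your identity $m'=ef\in D(\mathscr G)$ is a nice intermediate step and can in fact be completed for general $R$: from $e=(ef)m$ and $f=m(ef)$ one gets $e(x)=e(x)f(x)m(x)=f(x)$ pointwise on $\mathscr G\skel 0$, hence $e=f=m'$, $mm'=m'$, and $m=(mm')m=m'm=e\in D(\mathscr G)$. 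But that repair is not in your write-up, and without it the proof does not cover the stated generality.
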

\begin{proof}
Let $m\in N_g$ and $n\in N_h$ and suppose that $m'\in N_{g\inv}$ and $n'\in N_{h\inv}$ are as in \eqref{eq:normalizer}.
Note that $m'mnn'=nn'm'm$  by Proposition~\ref{p:are.idem} and commutativity of $D(\mathscr G)$. We then compute $mnn'm'mn= mm'mnn'n=mn$ and similarly  $n'm'mnn'm'=n'nn'm'mm'=n'm'$.     Also, $mnD(\mathscr G)n'm'\cup n'm'D(\mathscr G)mn\subseteq D(\mathscr G)$ and so $mn\in N$ and we can take $(mn)' = n'm'$.  It follows that $N$ is a von Neumann regular semigroup.  If $e\in N_g$ is an idempotent and $e'\in N_{g\inv}$ is as in \eqref{eq:normalizer}, then  we get that $e'=e'ee'=e'eee'=ee'e'e=ee(e'e')ee=ee=e$ where the third equality is from Proposition~\ref{p:are.idem} and commutativity of $D(\mathscr G)$ and the penultimate equality is from the previous computation with $m=e=n$.  Therefore, $e=ee=ee'\in D(\mathscr G)$ by Proposition~\ref{p:are.idem}. Thus $E(N)\subseteq D(\mathscr G)$, which is commutative, and so the idempotents of $N$ commute, establishing that $N$ is an inverse semigroup.  Clearly, mapping $m\in N_g\setminus \{0\}$ to $g$ is a partial homomorphism and so $N$ is $G$-graded.
\end{proof}

\begin{Rmk}\label{r:where.the.idems.are}
Note that the above proof shows that each idempotent $e$ of $N$ belongs to $D(\mathscr G)$. We shall always view $\Bis_c^h(\mathscr G)$ as a subsemigroup of $N$ via $U\mapsto \chi_U$.
\end{Rmk}

To understand the idempotents of $N$, we from now on impose the assumption that $R$ is indecomposable. Let $\mathcal B$ be the Boolean algebra of compact open subsets of $\mathscr G\skel 0$. Note that we can canonically identify $\mathscr G\skel 0$ with the Stone dual $\wh B$ of $\mathcal B$.

\begin{Prop}\label{p:idempotents}
If $R$ is indecomposable, then \[E(D(\mathscr G)) = E(N)=\{\chi_U\mid U\in \mathcal B\}.\]
\end{Prop}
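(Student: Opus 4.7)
The plan is to establish the equality by cycling through three inclusions: $\{\chi_U\mid U\in\mathcal B\}\subseteq E(N)$, then $E(N)\subseteq E(D(\mathscr G))$, then $E(D(\mathscr G))\subseteq \{\chi_U\mid U\in\mathcal B\}$. The first two inclusions do not require any hypothesis on $R$; indecomposability is used only for the third.

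For the first inclusion, fix $U\in \mathcal B$. Then $U$ is a compact open subset of $\mathscr G\skel 0\subseteq \mathscr G_1$, so $U$ is a homogeneous compact local bisection of degree $1\in G$. As already observed right after \eqref{eq:normalizer}, the characteristic function $\chi_U$ lies in $N_1$ (with $\chi_U'=\chi_{U\inv}=\chi_U$, since subsets of the unit space are self-inverse), and $\chi_U$ is plainly idempotent under convolution. Hence $\chi_U\in E(N)$.

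For the second inclusion, I would simply invoke Remark~\ref{r:where.the.idems.are} (equivalently, the computation inside the proof of Proposition~\ref{p:is.inverse}), which shows that every idempotent of $N$ already lies in $D(\mathscr G)$.

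The substantive step is the third inclusion, which is the only place indecomposability enters. Given $f\in E(D(\mathscr G))$, view $f$ as an element of $C_c(\mathscr G\skel 0,R)$ with pointwise operations; then $f(x)^2=f(x)$ for every $x\in\mathscr G\skel 0$, so each value $f(x)$ is an idempotent of $R$. Since $R$ is indecomposable, its only idempotents are $0$ and $1$, so $f$ takes values in $\{0,1\}$ and $f=\chi_U$ with $U=f\inv(1)$. Because $f$ is locally constant and compactly supported, $U$ is a clopen subset of the compact set $\supp(f)$, hence $U\in\mathcal B$. This step, though short, is the whole content of the proposition: without indecomposability one could manufacture genuinely new idempotents in $D(\mathscr G)$ out of non-trivial idempotents of $R$, so the proposition would fail.
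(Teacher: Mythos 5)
Your proof is correct and follows essentially the same route as the paper's own (much terser) argument: the paper likewise cites Remark~\ref{r:where.the.idems.are} to place $E(N)$ inside $D(\mathscr G)$ and then uses that the product in $D(\mathscr G)$ is pointwise together with indecomposability of $R$ to identify its idempotents with characteristic functions of sets in $\mathcal B$. Your version merely spells out the three inclusions, including the easy containment $\{\chi_U\mid U\in\mathcal B\}\subseteq E(N)$ that the paper leaves implicit from the observation following \eqref{eq:normalizer}.
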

\begin{proof}
This is immediate from Remark~\ref{r:where.the.idems.are} since the product in $D(\mathscr G)$ is pointwise and $R$ contains only the idempotents $0$ and $1$.
\end{proof}

For an indecomposable $R$, we can further clarify the nature of the idempotents $m'm$ and $mm'$.

\begin{Prop}\label{p:the.idems}
Suppose that $R$ is indecomposable and let $m,m'\in N_g$.  Then $m'm=\chi_{\dom(\supp(m))}$ and $mm' = \chi_{\ran(\supp(m))}$.
\end{Prop}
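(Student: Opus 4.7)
The plan is to combine Propositions~\ref{p:are.idem} and~\ref{p:idempotents} to realize the idempotents $m'm$ and $mm'$ as characteristic functions of compact open subsets of $\mathscr G\skel 0$, and then to use the defining relations $mm'm=m$ and $m'mm'=m'$ from \eqref{eq:normalizer} to identify those subsets as $\dom(\supp(m))$ and $\ran(\supp(m))$, respectively.

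First, Proposition~\ref{p:are.idem} places $m'm$ and $mm'$ inside $E(D(\mathscr G))$, and Proposition~\ref{p:idempotents} (which is where indecomposability of $R$ is used) then gives $m'm=\chi_U$ and $mm'=\chi_V$ for some $U,V\in\mathcal B$. Next I would rewrite $mm'm=m$ as $\chi_V m=m$ and exploit that $\chi_V$ is supported on units: in the convolution $(\chi_V m)(\gamma)=\sum_{\dom(\alpha)=\dom(\gamma)}\chi_V(\gamma\alpha\inv)m(\alpha)$, only the term $\alpha=\gamma$ survives, yielding $(\chi_V m)(\gamma)=\chi_V(\ran(\gamma))m(\gamma)$. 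Thus for every $\gamma\in\supp(m)$, the equation $(\chi_V(\ran(\gamma))-1)m(\gamma)=0$ together with $m(\gamma)\neq 0$ forces $\ran(\gamma)\in V$, so $\ran(\supp(m))\subseteq V$. The symmetric computation $m=m(m'm)=m\chi_U$ gives $\dom(\supp(m))\subseteq U$.

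For the reverse inclusions I would expand the convolutions directly. If $y\in V$ then $1=\chi_V(y)=(mm')(y)=\sum_{\dom(\alpha)=y}m(y\alpha\inv)m'(\alpha)$, and since $1\neq 0$ in $R$ some summand must be nonzero, providing $\alpha$ with $\dom(\alpha)=y$ such that $y\alpha\inv\in\supp(m)$ has range $y$, whence $y\in\ran(\supp(m))$. An identical expansion of $(m'm)(x)=1$ for $x\in U$ yields $U\subseteq\dom(\supp(m))$, and combining the two containments completes the proof. The argument is largely bookkeeping; the only subtlety is that convolution sums in $R$ can \emph{a priori} exhibit cancellations, but we only need the fact that a sum evaluating to $1\neq 0$ must have a nonzero summand, so no integral domain hypothesis enters beyond the indecomposability already used to invoke Proposition~\ref{p:idempotents}.
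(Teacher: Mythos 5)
Your proof is correct and follows essentially the same route as the paper: invoke Propositions~\ref{p:are.idem} and~\ref{p:idempotents} to write the idempotents as characteristic functions of compact open subsets of the unit space, then use the convolution identities $m=m(m'm)$ and the evaluation of $m'm$ (resp.\ $mm'$) at points of $U$ (resp.\ $V$) to get both containments, noting that a sum equal to $1\neq 0$ has a nonzero summand. The only cosmetic difference is that you write out both equalities where the paper proves one and declares the other dual.
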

\begin{proof}
As $m'm$ is an idempotent, it follows from Proposition~\ref{p:idempotents} that $m'm=\chi_U$ for some compact open $U\subseteq \mathscr G\skel 0$.  So it suffices to show the equality $\supp(m'm) = \dom(\supp(m))$. If $m(\alpha)\neq 0$, then from $m(\alpha) = m(m'm)(\alpha) = m\chi_U(\alpha)=m(\alpha)\chi_U(\dom(\alpha))$, we deduce that $\dom(\alpha)\in U$.  If $x\in U$, then $1=m'm(x) = \sum_{\dom(\alpha)=x}m'(\alpha\inv)m(\alpha)$ and so $m(\alpha)\neq 0$ for some $\alpha$ in $\dom\inv(x)$.  Thus $x\in \dom(\supp(m))$.  The other equality is dual.
\end{proof}

A consequence of Proposition~\ref{p:the.idems} is that intersection of the diagonal subalgebra with $N$ is an inverse subsemigroup.

\begin{Cor}\label{c:diag.inv.sub}
Let $R$ be indecomposable.  Then, for $m\in N\cap D(\mathscr G)$, we have that $m'\in N\cap D(\mathscr G)$ where $m'$ is as in \eqref{eq:normalizer}.
\end{Cor}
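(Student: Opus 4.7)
The plan is to pin down $\supp(m')$ via two complementary applications of Proposition~\ref{p:the.idems} together with a direct convolution computation. Since $m\in D(\mathscr G)$ has $\supp(m)\subseteq \mathscr G\skel 0\subseteq \mathscr G_1$, it follows that $m\in N_1$ and hence $m'\in N_1$ as well. Set $U=\supp(m)$. The first application of Proposition~\ref{p:the.idems} (to $m$) then gives $m'm = \chi_U = mm'$, since $\dom$ and $\ran$ restrict to the identity on $U\subseteq\mathscr G\skel 0$.

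The key observation is that the defining conditions in \eqref{eq:normalizer} are symmetric in $(m,m')$, so $m'$ lies in $N_1$ with $m$ serving as its dual. Applying Proposition~\ref{p:the.idems} to $m'$ in this way yields $\dom(\supp m') = \supp(mm') = U$ and $\ran(\supp m') = \supp(m'm) = U$. In particular, every arrow in $\supp(m')$ has both source and range in $U\subseteq \mathscr G\skel 0$.

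Next I unfold the convolution pointwise. Because $m$ vanishes off $\mathscr G\skel 0$, the sum $(m'm)(\gamma) = \sum_{\dom(\alpha)=\dom(\gamma)} m'(\gamma\alpha\inv)\,m(\alpha)$ collapses: the only admissible $\alpha$ with $m(\alpha)\neq 0$ is $\alpha = \dom(\gamma)$. This gives
\[
m'm(\gamma) = m'(\gamma)\,m(\dom\gamma).
\]
Specializing at $x\in U$ forces $m(x)m'(x)=1$, so $m(x)\in R^\times$ for every $x\in U$.

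To finish: for any $\gamma\in\supp(m')$, the second paragraph gives $\dom(\gamma)\in U$, so $m(\dom\gamma)$ is a unit. The identity $m'(\gamma)\,m(\dom\gamma)=\chi_U(\gamma)$ then forces $m'(\gamma)=0$ whenever $\gamma\notin U$. Hence $\supp(m')\subseteq U\subseteq\mathscr G\skel 0$, i.e., $m'\in D(\mathscr G)$. No real obstacle is anticipated; the only mildly subtle point is the legitimacy of swapping the roles of $m$ and $m'$ when applying Proposition~\ref{p:the.idems}, which is automatic from the self-dual form of \eqref{eq:normalizer}, and the tacit use of indecomposability of $R$ in that proposition to ensure the idempotents $m'm$ and $mm'$ really are characteristic functions of compact open sets.
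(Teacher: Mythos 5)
Your proof is correct and follows essentially the same route as the paper's: apply Proposition~\ref{p:the.idems} to $m$ and (by the symmetry of \eqref{eq:normalizer}) to $m'$, collapse the convolution $m'm(\gamma)=m'(\gamma)m(\dom\gamma)$ using that $m$ is supported on the unit space, deduce that $m(x)\in R^\times$ on $\supp(m)$, and conclude that $\supp(m')\subseteq\mathscr G\skel 0$. The only cosmetic difference is that the paper phrases the last step as $m'm(\alpha)\neq 0$ hence $\alpha\in\supp(m'm)\subseteq\mathscr G\skel 0$, which is logically identical to your contrapositive.
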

\begin{proof}
Let $\alpha\in \supp(m')$ and say $x=\dom(\alpha)$.  By Proposition~\ref{p:the.idems}, we have that
$1=mm'(x)=m(x)m'(x)$, as $m$ is supported on $\mathscr G\skel 0$, and hence $m(x)\in R^\times$.  But then
$m'm(\alpha) =m'(\alpha)m(x)\neq 0$ because $m$ is supported on $\mathscr G\skel 0$, $\alpha\in \supp(m')$ and $m(x)$ is a unit.  We deduce that $\alpha\in \mathscr G\skel 0$ by Proposition~\ref{p:the.idems} and hence $m'\in N\cap D(\mathscr G)$.
\end{proof}

Our next result says that the support of an element of the normalizer is highly restricted, and close to being a local bisection.

\begin{Prop}\label{p:isotropy}
Let $R$ be indecomposable.  If $m\in N$ and $\alpha,\beta\in \supp(m)$, then $\dom(\alpha)=\dom(\beta)$ if and only if $\ran(\alpha)=\ran(\beta)$.
\end{Prop}
\begin{proof}
Let $x=\dom(\alpha)=\dom(\beta)$.  Let $y=\ran(\alpha)$ and $z=\ran(\beta)$ and suppose that $y\neq z$.  Then, since $\dom\inv(x)\cap \supp(m)$ is finite and $\mathscr G\skel 0$ is Hausdorff, we can find disjoint compact open subsets $U,V\subseteq \mathscr G\skel 0$ with $U\cap \ran(\dom\inv(x)\cap \supp(m)) = \{y\}$ and $V\cap \ran(\dom\inv(x)\cap \supp(m)) =\{z\}$.  Then $m_U=\chi_Um$, $m_V=\chi_Vm$ and $m_{U\cup V} = \chi_{U\cup V}m$ all belong to $N$.  Note that $m_U' = m'\chi_U$, $m_V' = m'\chi_V$ and $m_{U\cup V}' = m'\chi_{U\cup V}$.  We conclude from Proposition~\ref{p:the.idems} that $1=m_U'm_U(x)=m_V'm_V(x)=m_{U\cup V}'m_{U\cup V}(x)$ as $m_U(\alpha)=m(\alpha)\neq 0$, $m_V(\beta)=m(\beta) \neq 0$ and $m_{U\cup V}(\alpha)=m(\alpha)\neq 0$.  Since $U\cap V=\emptyset$, we deduce
\begin{align*}
2&=m'_Um_U(x)+m'_Vm_V(x) = m'\chi_Um(x)+m'\chi_Vm(x) \\ &= m'(\chi_U+\chi_V)m(x) = m'\chi_{U\cup V}m(x) = m'_{U\cup V}m_{U\cup V}(x)=1
\end{align*}
which is a contradiction.  Thus $y=z$.  The reverse implication is proved dually (by considering $mm'$).
\end{proof}

\begin{Cor}\label{c:isotropy.support}
Suppose that $R$ is indecomposable.  Let $\mathscr H$ be the interior of the isotropy bundle of the clopen subgroupoid $\mathscr G_1$ and let $m\in N_g$.  Then the containment \[\supp(m)\inv \supp(m)\cup \supp(m)\supp(m)\inv \subseteq\mathscr H\skel 1\] holds.
\end{Cor}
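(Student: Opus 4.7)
The statement asserts that composable pairs within $\supp(m)$ always yield arrows in the interior of the isotropy bundle of $\mathscr G_1$. My plan is to combine Proposition~\ref{p:isotropy}, which already forces a local-bisection-like linkage between $\dom$ and $\ran$ on $\supp(m)$, with the basis of compact local bisections coming from ampleness to build explicit open neighborhoods.

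First I would fix $\alpha,\beta\in \supp(m)$ composable in the way required for $\supp(m)\inv\supp(m)$, that is, with $\ran(\alpha)=\ran(\beta)$. Proposition~\ref{p:isotropy} then gives $\dom(\alpha)=\dom(\beta)$, so $\gamma:=\alpha\inv\beta$ satisfies $\dom(\gamma)=\dom(\beta)=\dom(\alpha)=\ran(\alpha\inv)=\ran(\gamma)$ and therefore lies in the isotropy bundle. Since $m\in R\mathscr G_g$ we have $\supp(m)\subseteq \mathscr G_g$, and the cocycle property of $c$ places $\gamma\in \mathscr G_{g\inv}\mathscr G_g\subseteq \mathscr G_1$. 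So $\gamma$ is already known to be in the isotropy bundle of $\mathscr G_1$; the content of the corollary is that it lies in the \emph{interior}.

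For the interior step, I would exploit that $m$ is locally constant with values in the discrete ring $R$, so $\supp(m)=\{x\mid m(x)\neq 0\}$ is clopen in $\mathscr G\skel 1$. Using that $\mathscr G_g$ is also clopen and that $\Bis_c(\mathscr G)$ is a basis for the topology, I pick compact local bisections $U\ni\alpha$ and $V\ni\beta$ with $U,V\subseteq \supp(m)\cap \mathscr G_g$. Because $\ran|_U$ and $\ran|_V$ are homeomorphisms onto compact open subsets of $\mathscr G\skel 0$ both containing $\ran(\alpha)=\ran(\beta)$, I shrink $U$ and $V$ to arrange $\ran(U)=\ran(V)$. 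Then $W:=U\inv V$ is a compact local bisection contained in $\mathscr G_1$ and containing $\gamma$. Every element of $W$ has the form $\alpha'^{-1}\beta'$ with $\alpha'\in U$, $\beta'\in V$ and $\ran(\alpha')=\ran(\beta')$, and since $\alpha',\beta'\in \supp(m)$, Proposition~\ref{p:isotropy} again yields $\dom(\alpha')=\dom(\beta')$; hence $\alpha'^{-1}\beta'$ is in the isotropy bundle. Thus $W$ is an open neighborhood of $\gamma$ inside the isotropy bundle of $\mathscr G_1$, proving $\gamma\in \mathscr H\skel 1$.

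The other containment $\supp(m)\supp(m)\inv\subseteq \mathscr H\skel 1$ is handled by the dual argument: start with $\alpha,\beta\in\supp(m)$ satisfying $\dom(\alpha)=\dom(\beta)$, apply the $\ran$ half of Proposition~\ref{p:isotropy}, and construct the neighborhood as $UV\inv$ after shrinking so that $\dom(U)=\dom(V)$ via the homeomorphism $\dom$ on each local bisection. I do not expect a serious obstacle here — the main work has already been done in Proposition~\ref{p:isotropy}; the only delicate point is the coordinated shrinking of $U$ and $V$ to match ranges (resp.~domains), which is routine since $\mathscr G\skel 0$ is Hausdorff and $\ran$ (resp.~$\dom$) is a homeomorphism on each local bisection.
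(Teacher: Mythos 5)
Your proof is correct and follows essentially the same route as the paper: Proposition~\ref{p:isotropy} places $\supp(m)\inv\supp(m)$ and $\supp(m)\supp(m)\inv$ inside the isotropy bundle of $\mathscr G_1$, and openness then upgrades this to the interior. The paper simply observes that these two sets are themselves open subsets of $\mathscr G_1$ (products of open sets in an \'etale groupoid are open), so your explicit construction of the bisection neighborhoods $U\inv V$ and the coordinated shrinking of $U$ and $V$ are not needed, though they do no harm.
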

\begin{proof}
Obviously, $\supp(m)\inv \subseteq c(g\inv)$ and hence  $\supp(m)\inv\supp(m)$ and  $\supp(m)\supp(m)\inv$ are open subsets of $\mathscr G_1$.   If $\alpha,\beta\in \supp(m)$ with $\beta\inv\alpha$ defined, then $\ran(\alpha) =\ran(\beta)$ and so $\dom(\alpha)=\dom(\beta)$ by Proposition~\ref{p:isotropy}.  Thus $\beta\inv\alpha$ is an element of the isotropy group at $\dom(\alpha)$ and so we conclude that  $\supp(m)\inv\supp(m)\subseteq \mathscr H\skel 1$. The other inclusion is similar.
\end{proof}

Let us say that $\mathscr G$ satisfies the \emph{local bisection hypothesis} (relative to $R$ and the grading) if the support of each element of $N$ is a local bisection. For example, if $H$ is a group with the trivial grading and $R$ is indecomposable, then $N$ consists of the units of $RH$, together with $0$, and so $H$ satisfies the local bisection hypothesis if and only if $RH$ has no non-trivial units.

We continue to denote by $\mathscr H$ the interior of the isotropy bundle of $\mathscr G_1$.  Note that $D(\mathscr G)\subseteq R\mathscr H\subseteq R\mathscr G$ since $\mathscr H$ is an open subgroupoid containing all the objects.   We view $\mathscr H$ as trivially graded.   Let $N_{\mathscr H}$ denote the normalizer of $D(\mathscr G)$ in $R\mathscr H$.  The following proposition will allows us to reduce to the case of a group bundle when studying the local bisection property.

\begin{Prop}\label{p:reduced.to.iso}
Let $R$ be indecomposable and $\mathscr G$ a Hausdorff ample $G$-graded groupoid.  Let $\mathscr H$ be the interior of the isotropy bundle of $\mathscr G_1$.
\begin{enumerate}
\item $N_{\mathscr H} = N\cap R\mathscr H$.
\item $\mathscr G$ satisfies the local bisection hypothesis if and only if $\mathscr H$ does.
\end{enumerate}
\end{Prop}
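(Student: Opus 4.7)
The plan is to prove (1) first, then deduce (2) from (1). For (1), the inclusion $N_{\mathscr H}\subseteq N\cap R\mathscr H$ is immediate since any witness in $R\mathscr H$ is a witness in $R\mathscr G_1$. For the reverse, let $m\in N\cap R\mathscr H$; since $\supp(m)\subseteq \mathscr H\subseteq \mathscr G_1$, $m$ lies in $N_1$ with some witness $m'\in R\mathscr G_1$, and I will show $m'\in R\mathscr H$ automatically. Setting $U=\dom\supp(m)=\ran\supp(m)$, Proposition \ref{p:the.idems} gives $m'm=mm'=\chi_U$, which forces $\supp(m')\subseteq \dom\inv(U)\cap \ran\inv(U)$.

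The key is an isotropy-fiber analysis. For each $x\in U$ write $H_x=G_x\cap \mathscr G_1$ and $K_x=G_x\cap \mathscr H$. Unpacking $mm'=m'm=\chi_U$ at the fiber $H_x$ shows that $m_x:=m|_{H_x}\in RK_x$ is a two-sided unit in $RH_x$ with inverse $m'_x:=m'|_{H_x}$. The first technical step is to show $K_x\triangleleft H_x$: pick a compact local bisection $V\subseteq \mathscr G_1$ containing a given $h\in H_x$; in any defined product $vwv'$ with $v\in V$, $w\in \mathscr H$, $v'\in V\inv$, the local bisection property forces $v'=v\inv$, so $vwv'=vwv\inv$ is in isotropy. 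Hence the open set $V\cdot \mathscr H\cdot V\inv$ lies in the isotropy of $\mathscr G_1$, therefore in $\mathscr H$, giving $hK_xh\inv\subseteq K_x$. With normality in hand, the coset decomposition $RH_x=\bigoplus_{c\in H_x/K_x}RK_x\cdot g_c$ applied to $m_xm'_x=1$ annihilates the non-identity coset components of $m'_x$, giving $m'_x\in RK_x$. To kill support of $m'$ on morphisms $\gamma\in \mathscr G_1(y,x)$ with $y\neq x$ in $U$, parameterize $\mathscr G_1(y,x)=\gamma_0 H_y$ and set $f(h)=m'(\gamma_0 h)$; the equations $m'm(\gamma)=0$ become $f\cdot m_y=0$ in $RH_y$, and since $m_y$ is a unit, $f\equiv 0$. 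Combining, $\supp(m')\subseteq \bigcup_{x\in U}K_x\subseteq \mathscr H$, so $m'\in R\mathscr H$ and $m\in N_{\mathscr H}$.

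For (2), the forward direction follows from (1): any $m\in N_{\mathscr H}$ lies in $N$, so if $\mathscr G$ has the local bisection property then $\supp(m)\subseteq\mathscr H$ is a local bisection of $\mathscr G$, hence of $\mathscr H$. For the converse, assume $\mathscr H$ has the local bisection property but that some $m\in N_g$ has $\supp(m)$ not a local bisection; by Proposition \ref{p:isotropy} there are distinct $\alpha,\beta\in\supp(m)$ with $\dom\alpha=\dom\beta$ (hence $\ran\alpha=\ran\beta$). Choose a compact homogeneous local bisection $W\subseteq\mathscr G_{g\inv}$ containing $\beta\inv$ with $\ran W\cap \dom\supp(m)=\{\dom\beta\}$ (possible since $\dom\supp(m)$ is finite and $\mathscr G\skel 0$ is Hausdorff). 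Then $m\chi_W\in N_1$, and any $\gamma\in\supp(m\chi_W)$ has the form $\alpha'\beta\inv$ for some $\alpha'\in\supp(m)$ sharing domain, hence range, with $\beta$; so $\gamma\in\supp(m)\supp(m)\inv\subseteq \mathscr H$ by Corollary \ref{c:isotropy.support}, and $m\chi_W\in R\mathscr H$. By (1), $m\chi_W\in N_{\mathscr H}$, but its support contains the distinct elements $\alpha\beta\inv$ and $1_{\ran\beta}=\beta\beta\inv$ (with nonzero coefficients $m(\alpha)$, $m(\beta)$) sharing domain $\ran\beta$, contradicting the local bisection hypothesis for $\mathscr H$.

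The principal obstacle is establishing normality of $K_x$ in $H_x$ and using it to force $m'_x\in RK_x$; once these are in hand, the non-isotropy vanishing and the reduction in (2) are essentially bookkeeping with the structural results of Proposition \ref{p:the.idems} and Corollary \ref{c:isotropy.support}.
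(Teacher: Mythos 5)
Your part (1) is correct but proceeds by a genuinely different and considerably heavier route than necessary: you prove that $K_x=G_x\cap\mathscr H\skel 1$ is normal in $H_x=G_x\cap\mathscr G_1\skel 1$ (via the observation that $V\mathscr H\skel 1V\inv$ is an open subset of the isotropy bundle for any local bisection $V$) and then run a coset-decomposition argument in $RH_x$ to force $m'_x\in RK_x$, followed by a separate fiberwise argument killing the support of $m'$ on non-isotropy arrows. This all checks out, though in the coset step you should say explicitly that the two equations $m_xm'_x=1=m'_xm_x$ first produce a two-sided inverse $n_e$ of $m_x$ lying \emph{inside} $RK_x$, after which $m_xn_c=0$ yields $n_c=n_em_xn_c=0$ for the non-identity coset components. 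The paper reaches the same conclusion in three lines: if $\alpha\colon x\to y$ lies in $\supp(m')$, then every $\beta\in\supp(m')\cap\dom\inv(x)$ has range $y$ by Proposition~\ref{p:isotropy}, so if $x\neq y$ every term of $1=mm'(x)=\sum_{\dom(\beta)=x}m(\beta\inv)m'(\beta)$ vanishes because $m$ is supported on isotropy. Your normality observation is a nice structural fact, but it is not needed here.

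There is, however, a genuine error in your proof of the converse direction of (2). You ask for a compact local bisection $W\subseteq\mathscr G_{g\inv}$ containing $\beta\inv$ with $\ran W\cap\dom(\supp(m))=\{\dom\beta\}$, justified by ``$\dom\supp(m)$ is finite.'' It is not finite: $\dom(\supp(m))$ is a compact \emph{open} subset of $\mathscr G\skel 0$, and since $\ran W$ is also open, your intersection condition forces $\dom\beta$ to be an isolated point of $\mathscr G\skel 0$, which fails in general (e.g., for path groupoids with no isolated boundary paths). You appear to have conflated $\dom(\supp(m))$ with the finite set $\ran(\dom\inv(x)\cap\supp(m))$ that appears in the proof of Proposition~\ref{p:isotropy}. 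The repair is easy and brings you back to the paper's argument (mirrored to the right-hand side): choose a homogeneous compact local bisection $V$ with $\beta\in V\subseteq\supp(m)$ and set $W=V\inv$; then $\supp(m\chi_W)\subseteq\supp(m)\supp(m)\inv\subseteq\mathscr H\skel 1$ directly by Corollary~\ref{c:isotropy.support}, so $m\chi_W\in N\cap R\mathscr H=N_{\mathscr H}$ by part (1), and your computation $m\chi_W(\alpha\beta\inv)=m(\alpha)\neq 0$, $m\chi_W(\ran\beta)=m(\beta)\neq 0$ goes through verbatim to produce the contradiction. With that one fix the argument is sound.
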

\begin{proof}
Clearly, $N_{\mathscr H}\subseteq N\cap R\mathscr H$.  Suppose that $m\in N\cap R\mathscr H$; it suffices to show that $m'$ is supported on $\mathscr H\skel 1$.  Note that $m\in N_1$ and so $m'\in N_1$.    Let $\alpha\colon x\to y$ be in the support of $m'$.  Then $mm'(x)=1$ by Proposition~\ref{p:the.idems}.  Thus
\begin{equation}\label{eq:easy}
1=\sum_{\dom(\beta)=x} m(\beta\inv)m'(\beta).
\end{equation}
  Now if $\dom(\beta)=x$ and $\beta\in \supp(m')$, then $\ran(\beta)=y$ by Proposition~\ref{p:isotropy}.  If $x\neq y$, then $m(\beta\inv)=0$ for all $\beta\in \dom\inv(x)$, contradicting \eqref{eq:easy}.  We conclude that $x=y$ and so $\supp(m')\subseteq \mathscr H\skel 1$, establishing the first item.

 If $\mathscr G$ satisfies the local bisection hypothesis, then so does $\mathscr H$ as $N_{\mathscr H}\subseteq N$.  Suppose that $N_{\mathscr H}$ satisfies the local bisection hypothesis and let $m\in N_g$.  Suppose that $\alpha,\beta\in \supp(m)$ with $\dom(\alpha)=\dom(\beta)=x$.  Since $m$ is locally constant, there exists a compact local bisection $U$ with $\alpha\in U\subseteq \supp(m)$.  Then $U\inv \supp(m)\subseteq \mathscr H\skel 1$ by Corollary~\ref{c:isotropy.support} and so $n=\chi_{U\inv}m\in N\cap R\mathscr H= N_{\mathscr H}$.  Thus $\supp(n)$ is a local bisection.  On the other hand,
 \begin{align*}
 n(x) &= \sum_{\dom(\gamma)=x} \chi_{U\inv}(\gamma\inv)m(\gamma) = \chi_{U\inv}(\alpha\inv)m(\alpha)=m(\alpha)\neq 0\\ n(\alpha\inv\beta) &= \sum_{\dom(\gamma)=x}\chi_{U\inv}(\alpha\inv\beta\gamma\inv)m(\gamma)=\chi_{U\inv}(\alpha\inv)m(\beta)=m(\beta)\neq 0.
 \end{align*}
   Thus $\alpha=\beta$ as $\supp(n)$ is a local bisection.  Similarly, we have that $\ran|_{\supp(m)}$ is injective.
\end{proof}

As a consequence, if $\mathscr G_1$ is effective, then $\mathscr G$ satisfies the local bisection hypothesis.
\begin{Cor}\label{c:effective.case}
Suppose that $R$ is indecomposable, $\mathscr G$ is $G$-graded and Hausdorff and  $\mathscr G_1$ is effective. Then $\mathscr G$ satisfies the local bisection hypothesis.
\end{Cor}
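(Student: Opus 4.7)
The plan is to use Proposition~\ref{p:reduced.to.iso}(2) to reduce the statement to the interior $\mathscr H$ of the isotropy bundle of $\mathscr G_1$. It then suffices to verify that $\mathscr H$ itself satisfies the local bisection hypothesis.

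First, I would unpack the hypothesis that $\mathscr G_1$ is effective: by definition this says precisely that the interior of the isotropy bundle of $\mathscr G_1$ consists of identity morphisms only. Hence $\mathscr H\skel 1 = \mathscr G\skel 0$, and consequently $R\mathscr H = D(\mathscr G)$ and $\mathscr H$ is trivially $G$-graded with all elements in degree $1$.

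Next, I would observe that the local bisection hypothesis holds trivially for $\mathscr H$: any element $m\in N_{\mathscr H}$ lies in $R\mathscr H = D(\mathscr G)$, so $\supp(m)\subseteq \mathscr G\skel 0$. But both $\dom$ and $\ran$ restrict to the identity on $\mathscr G\skel 0$, and hence are injective on any subset of $\mathscr G\skel 0$; thus $\supp(m)$ is (an open subset of) a local bisection. Applying Proposition~\ref{p:reduced.to.iso}(2) now yields the conclusion for $\mathscr G$.

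There is no real obstacle here; the work has already been done by Proposition~\ref{p:reduced.to.iso}, which bootstraps the local bisection property from $\mathscr H$ up to $\mathscr G$, together with the fact that effectiveness of $\mathscr G_1$ collapses $\mathscr H$ to the unit space. The only point worth double-checking is that the grading in Proposition~\ref{p:reduced.to.iso} is respected: since $\mathscr H\subseteq \mathscr G_1$, the restriction argument is in the trivially graded setting, which is precisely the case handled by that proposition.
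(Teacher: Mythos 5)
Your proof is correct and follows the same route as the paper: effectiveness of $\mathscr G_1$ collapses the interior $\mathscr H$ of its isotropy bundle to $\mathscr G\skel 0$, so supports of elements of $N_{\mathscr H}\subseteq D(\mathscr G)$ are automatically local bisections, and Proposition~\ref{p:reduced.to.iso}(2) lifts this to $\mathscr G$. The paper's proof is a one-line version of exactly this argument.
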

\begin{proof}
The support of any locally constant function on $\mathscr G\skel 0$ is a local bisection and so the corollary is immediate from
 Proposition~\ref{p:reduced.to.iso} as $\mathscr G_1$ is effective.
\end{proof}

\subsection{Achieving the local bisection hypothesis}
In this subsection we provide some assumptions on a groupoid that imply the local bisection hypothesis. We continue to fix an ample Hausdorff $G$-graded groupoid $\mathscr G$ and to denote the interior of the isotropy bundle of $\mathscr G_1$ by $\mathscr H$. The isotropy group of $\mathscr H$ at an object $x$ will be denoted by $H_x$.

Proposition~\ref{p:reduced.to.iso} suggests that we should impose conditions on the group rings of the isotropy groups of $\mathscr H$.  Since a group $H$ satisfies the local bisection hypothesis if and only if $RH$ has only trivial units, we shall need to impose this on a dense set of objects of $\mathscr H$.  Recall that $N_{\mathscr H}$ is the normalizer of $D(\mathscr G)$ in $R\mathscr H$.

\begin{Prop}\label{p:good.isotropy}
Let $R$ be indecomposable and $m\in N_{\mathscr H}$.  Let $x\in \mathscr G\skel 0$ be such that $RH_x$ has no non-trivial units.  Then $|H_x\cap \supp(m)|\leq 1$.
\end{Prop}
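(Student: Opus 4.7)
My plan is to construct a unit in $RH_x$ from $m$ and then invoke the no non-trivial units hypothesis.

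\medskip

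The key observation is that since $m \in N_{\mathscr H} = N \cap R\mathscr H$ (by Proposition~\ref{p:reduced.to.iso}(1)), the inverse $m'$ from \eqref{eq:normalizer} also lies in $N \cap R\mathscr H$ (since $m \in N_1$ forces $m' \in N_1$ and the argument in the proof of Proposition~\ref{p:reduced.to.iso} shows $\supp(m')\subseteq \mathscr H\skel 1$). In particular, every arrow in $\supp(m) \cup \supp(m')$ is an isotropy arrow. Consequently, if $\beta \in \supp(m)$ with $\dom(\beta) = x$, then $\ran(\beta)=\dom(\beta)=x$, so $\beta \in H_x$; similarly for $\supp(m')$.

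\medskip

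Define
\[
\til m = \sum_{\beta \in H_x \cap \supp(m)} m(\beta)\beta \in RH_x, \qquad \til{m}' = \sum_{\gamma \in H_x \cap \supp(m')} m'(\gamma)\gamma \in RH_x.
\]
I will show $\til m \til{m}' = 1$ in $RH_x$. For $\gamma \in H_x$, the $\gamma$-coefficient of $\til m \til{m}'$ is
\[
\sum_{\substack{\alpha,\beta\in H_x\\ \alpha\beta=\gamma}} m(\alpha)m'(\beta).
\]
Because both $\supp(m)$ and $\supp(m')$ lie in $\mathscr H\skel 1$, the condition $\dom(\alpha)=\ran(\beta)=x$ together with $\alpha\beta=\gamma\in H_x$ forces any nonzero contribution to come from $\alpha,\beta\in H_x$; hence this coefficient coincides with the groupoid convolution $(m m')(\gamma)$. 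By Proposition~\ref{p:the.idems}, $m m' = \chi_{\ran(\supp(m))}$, which vanishes off $\mathscr G\skel 0$ and equals $1$ at $x$ (note $x \in \ran(\supp(m))$ since $H_x\cap \supp(m)$ is nonempty — otherwise the conclusion holds vacuously). Therefore the coefficient is $1$ at $\gamma=x$ and $0$ otherwise, proving $\til m \til{m}' = 1$. The symmetric argument using $m'm=\chi_{\dom(\supp(m))}$ gives $\til{m}'\til m = 1$.

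\medskip

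Thus $\til m$ is a unit in $RH_x$. By hypothesis, $RH_x$ has no non-trivial units, so $\til m = rh$ for some $r\in R^\times$ and $h\in H_x$. Since the elements of $H_x \cap \supp(m)$ are by construction the $H_x$-support of $\til m$, this forces $|H_x\cap \supp(m)|\leq 1$.

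\medskip

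There is no real obstacle beyond being careful that the $\beta$-sums actually correspond to groupoid convolution; the containment $\supp(m),\supp(m') \subseteq \mathscr H\skel 1$ (ensured by reducing to $N_{\mathscr H}$) is exactly what makes the groupoid convolution at arrows of $H_x$ coincide with multiplication in the group algebra $RH_x$.
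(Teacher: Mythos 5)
Your proof is correct and follows essentially the same route as the paper: the paper applies the restriction homomorphism $\rho_x\colon R\mathscr H\to RH_x$ coming from the closed invariant subspace $\{x\}$, notes that $m'm=\chi_U=mm'$ forces $\rho_x(m)$ to be a unit, and invokes the no non-trivial units hypothesis. Your $\til m$ is exactly $\rho_x(m)$; you have merely verified by hand, via Proposition~\ref{p:the.idems}, the multiplicativity of restriction that the paper cites as a general fact.
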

\begin{proof}
Let $U=\dom(\supp(m))=\ran(\supp(m))$ (since $\dom=\ran$  for $\mathscr H$) and suppose that $x\in U$. Note that $\{x\}$ is a closed invariant subspace of $\mathscr H\skel 0$ and hence restriction to $H_x$ yields an $R$-algebra homomorphism $\rho_x\colon R\mathscr H\to RH_x$.   By Proposition~\ref{p:the.idems}, we have that  $m'm=\chi_U=mm'$ and so $\rho_x(m')\rho_x(m)=1=\rho_x(m)\rho_x(m')$.  Thus $\rho_x(m)$ is a unit and hence $\rho_x(m)$ has singleton support as $RH_x$ has no non-trivial units.  It follows that $|H_x\cap \supp(m)|=1$.
\end{proof}

We may now conclude that if $RH_x$ has only trivial units for a dense set of objects, then $\mathscr G$ satisfies the local bisection hypothesis.

\begin{Thm}\label{t:local.bis.from.iso}
Let $R$ be an indecomposable commutative ring with unit and let $\mathscr G$ be a Hausdorff ample $G$-graded groupoid.  Suppose that the set of objects $x\in \mathscr G\skel 0$ for which $RH_x$ has no non-trivial units is dense, where  $H_x$ denotes the isotropy group at $x$ in the interior of the isotropy bundle of $\mathscr G_1$.   Then $\mathscr G$ satisfies the local bisection hypothesis.
\end{Thm}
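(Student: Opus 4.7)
The plan is to reduce to the group bundle $\mathscr H$ and then use density of ``good'' objects together with the locally constant nature of elements of $N_{\mathscr H}$ to derive a contradiction.

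First, by Proposition~\ref{p:reduced.to.iso}(2), it suffices to prove that $\mathscr H$ itself satisfies the local bisection hypothesis. So I would fix $m\in N_{\mathscr H}$ and argue by contradiction: suppose $\supp(m)$ is not a local bisection. Since $\mathscr H$ is a group bundle (every arrow has equal domain and range), $\dom$ and $\ran$ coincide on $\mathscr H\skel 1$, so we may assume there exist distinct arrows $\alpha,\beta\in\supp(m)$ with $\dom(\alpha)=\dom(\beta)=x$; necessarily $\alpha,\beta\in H_x$.

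Second, using that $\mathscr H$ is an ample Hausdorff groupoid, $m$ is locally constant with $\supp(m)$ open, and $\alpha\neq\beta$, I would choose disjoint compact local bisections $U,V$ of $\mathscr H$ with $\alpha\in U\subseteq \supp(m)$ and $\beta\in V\subseteq \supp(m)$ (shrinking to arrange disjointness, which is possible by Hausdorffness of $\mathscr H\skel 1$). Then $W=\dom(U)\cap\dom(V)$ is an open neighborhood of $x$ in $\mathscr G\skel 0$.

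Third, I would invoke the density hypothesis to find an object $y\in W$ with $RH_y$ having no non-trivial units. Since $y\in\dom(U)\cap\dom(V)$, there exist (unique) $\alpha'\in U$ and $\beta'\in V$ with $\dom(\alpha')=y=\dom(\beta')$. Because $U\cap V=\emptyset$ we have $\alpha'\neq\beta'$, and both lie in $H_y\cap\supp(m)$ (as $U,V\subseteq\supp(m)$ and $\mathscr H$ is a group bundle). This gives $|H_y\cap\supp(m)|\geq 2$, contradicting Proposition~\ref{p:good.isotropy}. Hence $\dom|_{\supp(m)}$ is injective, and since $\dom=\ran$ on $\mathscr H$, so is $\ran|_{\supp(m)}$. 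Thus $\supp(m)$ is a local bisection, completing the proof.

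The heavy lifting was already done in Propositions~\ref{p:reduced.to.iso} and~\ref{p:good.isotropy}, so no step should present a real obstacle. The only subtlety worth flagging is the use of Hausdorffness of the arrow space to separate $\alpha$ and $\beta$ into disjoint compact local bisections sitting inside $\supp(m)$; this is exactly the place where the standing Hausdorff assumption on $\mathscr G$ (inherited by $\mathscr H$) is needed in the argument.
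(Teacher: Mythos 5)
Your proposal is correct and follows essentially the same route as the paper's proof: reduce to the interior $\mathscr H$ of the isotropy bundle via Proposition~\ref{p:reduced.to.iso}, separate two putative arrows of $\supp(m)$ with a common domain into disjoint compact local bisections inside $\supp(m)$ using Hausdorffness, and then use density to find an object $y$ in the intersection of their domains at which Proposition~\ref{p:good.isotropy} forces a contradiction. The only cosmetic difference is that you phrase the final step as $|H_y\cap\supp(m)|\geq 2$ versus the paper's $\alpha'=\beta'$ contradicting disjointness, which is the same argument.
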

\begin{proof}
By Proposition~\ref{p:reduced.to.iso} we may assume without loss of generality that the grading is trivial and $\mathscr G$ is its own isotropy bundle.  Suppose that $m\in N$ and that $\alpha,\beta\in \supp(m)$ with $\dom(\alpha)=\dom(\beta)$ but $\alpha\neq \beta$.  Since $\mathscr G$ is Hausdorff and $m$ is locally constant, we can find disjoint compact local bisections $U,V$ with $\alpha\in U$, $\beta\in V$ and $U,V\subseteq \supp(m)$.  By assumption  $\dom(U)\cap \dom(V)$ is non-empty.  Hence there is $x\in \dom(U)\cap \dom (V)$ with $RH_x$ having no non-trivial units. By assumption there exists $\alpha'\in U$ and $\beta'\in V$ with $\dom(\alpha')=x=\dom(\beta')$, i.e., with $\alpha',\beta'\in H_x$.  But then $\alpha'=\beta'$ by Proposition~\ref{p:good.isotropy}.  This contradicts that $U$ and $V$ are disjoint.  We conclude that $\dom|_{\supp(m)}$ is injective. As $\dom=\ran$, this completes the proof.
\end{proof}

We end this section with some necessary conditions for the local bisection hypothesis to hold and give a class of groupoids where the sufficient condition from Theorem~\ref{t:local.bis.from.iso} is necessary.

\begin{Prop}\label{p:isolated}
Let $\mathscr G$ be a Hausdorff ample $G$-graded groupoid satisfying the local bisection hypothesis and $x\in \mathscr G\skel 0$ an isolated point. Let $G$ be the isotropy group at $x$ in $\mathscr G_1$.  Then $RG$ has no non-trivial units.
\end{Prop}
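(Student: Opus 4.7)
The plan is to exhibit any unit of $RG$ (where $G$ here denotes the isotropy group at $x$ in $\mathscr G_1$, keeping the statement's notation) as an element of the normalizer $N_1$, then let the local bisection hypothesis force its support to be a singleton.

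\medskip

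\noindent\textbf{Step 1 (Embedding of $RG$).} First I would observe that because $\mathscr G\skel 0$ is Hausdorff and $x$ is isolated, $\{x\}$ is clopen, and since $\dom$ and $\ran$ are local homeomorphisms, the subset $G = \dom\inv(\{x\})\cap \ran\inv(\{x\})\cap \mathscr G_1$ is an open, discrete subgroupoid of $\mathscr G_1$. Viewing $G$ as a one-object ample subgroupoid, extension by zero yields an injective $R$-algebra homomorphism $RG\hookrightarrow R\mathscr G$ whose image lies in $R\mathscr G_1$ and whose unit is identified with $\chi_{\{x\}}$.

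\medskip

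\noindent\textbf{Step 2 (Units of $RG$ lie in $N_1$).} Let $u\in RG$ be a unit with inverse $v$, so that $uv=vu=\chi_{\{x\}}$ inside $R\mathscr G$. I claim $u\in N_1$ with $u'=v$. The relations $uvu=u$ and $vuv=v$ are immediate. For the normalization condition, fix $f\in D(\mathscr G)$. A direct convolution computation, using that $u$ is supported in $G$ and that $f$ is supported on $\mathscr G\skel 0$, gives
\[uf(\gamma)\;=\;u(\gamma)\,f(\dom(\gamma))\;=\;f(x)\,u(\gamma)\qquad(\gamma\in G),\]
with $uf(\gamma)=0$ otherwise; that is, $uf=f(x)\,u$ in $R\mathscr G$. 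Hence
\[ufv\;=\;f(x)\,uv\;=\;f(x)\,\chi_{\{x\}}\;\in\;D(\mathscr G),\]
and symmetrically $vfu=f(x)\,\chi_{\{x\}}\in D(\mathscr G)$. Therefore $u\in N_1$.

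\medskip

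\noindent\textbf{Step 3 (Conclusion via the local bisection hypothesis).} Since $\mathscr G$ satisfies the local bisection hypothesis, $\supp(u)$ is a local bisection contained in $G$. But $G$ is a group, so $\dom$ is constant on it; injectivity of $\dom|_{\supp(u)}$ forces $|\supp(u)|\leq 1$. Because $u\neq 0$, we may write $u=rg$ for some $r\in R\setminus\{0\}$ and $g\in G$. Expanding $v=\sum_{h\in G}s_h h$, the identity $uv=\chi_{\{x\}}$ collapses to $r s_{g\inv}=1$, so $r\in R^\times$ and $u$ is a trivial unit.

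\medskip

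The only nontrivial step is Step~2, and even there the computation is short because the support of $u$ lies entirely in the isotropy at the isolated object $x$; everything else is bookkeeping, after which the local bisection hypothesis finishes the proof immediately.
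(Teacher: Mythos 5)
Your proof is correct and follows essentially the same route as the paper: the paper also embeds $RG$ as a clopen subgroupoid algebra, verifies that a unit $u$ and its inverse lie in $N_1$ via the computation $u\chi_U v=\chi_U\chi_{\{x\}}=v\chi_U u$ (your Step~2 with $f=\chi_U$), and then invokes the local bisection hypothesis to force singleton support. Your write-up just spells out the convolution bookkeeping in more detail.
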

\begin{proof}
Since $x$ is isolated, $G$ is a clopen subgroupoid of $\mathscr G\skel 1_1$.  Let $f\in RG$ be a unit with inverse $g$.  Then $f\chi_Ug=\chi_U\chi_{\{x\}} = g\chi_Uf$ for $U\subseteq \mathscr G\skel 0$ compact open and so $f,g\in N_1$. It follows that $f$ has singleton support since $\mathscr G$ satisfies the local bisection hypothesis and so $f$ is a trivial unit.
\end{proof}

\begin{Cor}\label{c:necess.suff}
Let $R$ be an indecomposable commutative ring with unit and $\mathscr G$ a Hausdorff ample $G$-graded groupoid such that $\mathscr G\skel 0$ has a dense subset $X$ of isolated points.  Then $\mathscr G$ satisfies the local bisection hypothesis if and only if, for each $x\in X$, the group algebra over $R$ of the isotropy group of $x$ in $\mathscr G_1$ has no non-trivial units.
\end{Cor}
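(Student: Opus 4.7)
The plan is to deduce this corollary directly by combining Theorem~\ref{t:local.bis.from.iso} (for sufficiency) with Proposition~\ref{p:isolated} (for necessity), once we reconcile the two statements of isotropy that appear.

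For the forward implication (necessity), I would simply quote Proposition~\ref{p:isolated}: every point of $X$ is isolated, so if $\mathscr G$ satisfies the local bisection hypothesis, then the group algebra over $R$ of the isotropy group of $\mathscr G_1$ at each $x\in X$ has no non-trivial units. No indecomposability of $R$ is needed here.

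For the reverse implication (sufficiency), the key observation is that if $x\in X$ is isolated in $\mathscr G\skel 0$, then $\{x\}$ is open in $\mathscr G\skel 0_1$, so the isotropy group $G_x$ at $x$ in $\mathscr G_1$ is open in the isotropy bundle of $\mathscr G_1$. Hence $G_x$ is contained in the interior $\mathscr H$ of that isotropy bundle, and $G_x$ coincides with $H_x$, the isotropy of $\mathscr H$ at $x$. Thus the hypothesis that $RG_x$ has no non-trivial units for $x\in X$ is exactly the hypothesis of Theorem~\ref{t:local.bis.from.iso} on the dense set $X$, and that theorem yields the local bisection hypothesis for $\mathscr G$.

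There is no real obstacle; the only thing to be careful about is the equality $H_x=G_x$ at isolated points, which is why density of isolated points makes the necessary condition from Proposition~\ref{p:isolated} line up cleanly with the sufficient condition from Theorem~\ref{t:local.bis.from.iso}. The proof should fit in just a few sentences referencing these two results.
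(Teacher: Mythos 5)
Your proposal is correct and is essentially the paper's own proof, which cites Proposition~\ref{p:isolated} for necessity and Theorem~\ref{t:local.bis.from.iso} for sufficiency; your extra remark that $G_x$ is open (indeed clopen) in $\mathscr G_1\skel 1$ at an isolated point, so that $H_x=G_x$, is a correct and worthwhile clarification of why the two hypotheses match up.
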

\begin{proof}
Necessity follows from Proposition~\ref{p:isolated} and sufficiency follows from Theorem~\ref{t:local.bis.from.iso}.
\end{proof}

\begin{Rmk}
If $\mathscr G_1$ is not effective, then in order for $\mathscr G$ to satisfy the local bisection hypothesis, $R$ must be reduced.  For if $0\neq n\in R$ is nilpotent and $U$ is a compact local bisection contained in the interior of the isotropy bundle of $\mathscr G_1$, but not inside $\mathscr G\skel 0$, then $\chi_{\dom(U)}-n\chi_U$ belongs to the normalizer of $D(\mathscr G)$ but its support is not a local bisection.  Indeed, if $n^k=0$, then \[(\chi_{\dom(U)}-n\chi_U)\chi_V(\chi_{\dom(U)}+\sum_{j=1}^{k-1}n^j\chi_U^j) = \chi_{V\cap \dom(U)}\] for any compact open $V\subseteq \mathscr G\skel 0$.

We also remark that the proof of Proposition~\ref{p:Neher} can be adapted to show that if $\mathscr G$ satisfies the local bisection hypothesis for every field $k$, then it satisfies it for every indecomposable reduced commutative ring with unit $R$.
\end{Rmk}

\section{Reconstructing the groupoid}\label{s:recons}
Let us assume now that $R$ is an indecomposable ring and $\mathscr G$ is a $G$-graded ample Hausdorff groupoid.  We continue to denote the normalizer of the diagonal subalgebra by $N$.
We wish to recover $\mathscr G$ (or, equivalently, $\Bis_c^h(\mathscr G)$) from the pair $(R\mathscr G,D(\mathscr G))$ if we impose the local bisection hypothesis.  Although $N$ contains $\Bis_c^h(\mathscr G)$ they are never equal because we can, for example, multiply by elements of $R^\times$. For instance, if $G$ is a group (with trivial grading), the diagonal subalgebra of $RG$ is $R$ and the normalizer of the diagonal subalgebra is the unit group $(RG)^\times$, together with $0$.  The local bisection hypothesis becomes the assumption that $(RG)^\times = R^\times G$ and so to recover $G$ we must factor out by $R^\times$.  Since we want to be able to recover our groupoid from diagonal-preserving ring isomorphisms, rather than $R$-algebra isomorphisms, we must think of $R^\times$ as the (non-zero) diagonal elements of $N$.  Thus, in the general case, we need to factor out by the diagonal elements of $N$ (and impose the local bisection hypothesis).   To do this, we shall need to recall how to factor an inverse semigroup by a suitable normal inverse subsemigroup.

An inverse subsemigroup $K$ of an inverse semigroup $S$ is \emph{normal} if $K$ is full (so $E(K)=E(S)$) and $sKs^*\subseteq K$ for all $s\in S$.  If $\p\colon S\to T$ is an inverse semigroup homomorphism, then its \emph{kernel} $\ker \p=\pinv(E(T))$ is a normal subsemigroup.  If $\sim$ is a congruence on $S$, then the kernel of $\sim$ will be the kernel of the quotient $S\to S/{\sim}$.  A congruence $\sim$ is \emph{idempotent separating} if $S\to S/{\sim}$ is injective on $E(S)$.  It is \emph{idempotent pure} if its kernel is $E(S)$.  It is well known and easy to check that $\sim$ is the equality relation if and only if it is both idempotent separating and idempotent pure; see~\cite[Chpt.~5]{Lawson}.    An idempotent separating congruence is completely determined by its kernel.  Moreover, a normal subsemigroup $K$ is the kernel of an idempotent separating congruence if and only if $a^*a=aa^*$ for all $a\in K$~\cite[Lemma~5.1]{Lawson}; the corresponding congruence is given by $s\sim t$ if $s^*s=t^*t$ and $st^*\in K$ (for a group, this reduces to the usual notion of factoring out a normal subgroup).  See~\cite[Chpt.~5]{Lawson} for details.

Consider now $K=N\cap D(\mathscr G)$.  %Note that since the support of any element of $D(\mathscr G)$ is a local bisection, Proposition~\ref{p:normalizers} immediately implies that $K$
It is an inverse subsemigroup of $N$ by Corollary~\ref{c:diag.inv.sub}, which, moreover, is full because $E(N)\subseteq D(\mathscr G)$ by Remark~\ref{r:where.the.idems.are}.  It is normal by definition of $N$.  Furthermore, since $D(\mathscr G)$ is commutative, $a'a=aa'$ for all $a\in K$.  Thus $K$ is the kernel of an idempotent separating congruence $\sim$ given by $m\sim n$ if $m'm=n'n$ and $mn'\in D(\mathscr G)$.  Note that $0$ is in a congruence class of its own and that if $m,n\in N\setminus \{0\}$ with $m\sim n$, then $mn'\in D(\mathscr G)\subseteq R\mathscr G_1$ implies that if $m\in N_g$, then $n\in N_g$.  Thus $\sim$ respects the grading.  In summary, we have the following proposition.

\begin{Prop}\label{p:is.equiv}
The relation $\sim$ is an idempotent separating congruence on $N$ with kernel $N\cap D(\mathscr G)$ that respects the grading.
\end{Prop}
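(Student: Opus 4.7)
The plan is to invoke the result quoted just before the statement (Lawson's Lemma 5.1 in~\cite{Lawson}) with $S=N$ and $K=N\cap D(\mathscr G)$: this produces an idempotent separating congruence on $N$ whose kernel is $K$ and whose defining formula is exactly the $\sim$ of the statement. Hence the work splits into verifying the two hypotheses of that lemma, namely that $K$ is a full normal inverse subsemigroup of $N$ and that $a^*a=aa^*$ for every $a\in K$, and then to check separately that the resulting congruence respects the $G$-grading.

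First I would check that $K$ is a full inverse subsemigroup. Closure under products is immediate since both $N$ and $D(\mathscr G)$ are closed under multiplication; closure under the inverse-semigroup involution $m\mapsto m^*=m'$ (the two coincide by uniqueness of inverses in the inverse semigroup $N$) is exactly Corollary~\ref{c:diag.inv.sub}. Fullness $E(K)=E(N)$ is immediate from Remark~\ref{r:where.the.idems.are}, which says $E(N)\subseteq D(\mathscr G)$. For normality, given $s\in N$ and $k\in K$, the element $sks^*$ lies in $N$ because $N$ is a semigroup (Proposition~\ref{p:is.inverse}), and $sks^*\in sD(\mathscr G)s^*\subseteq D(\mathscr G)$ directly from the defining condition~\eqref{eq:normalizer} of $N$. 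Finally, the commutativity condition $a^*a=aa^*$ for $a\in K$ is trivial: $a$ and $a^*$ both live in the commutative algebra $D(\mathscr G)$.

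Lemma~5.1 of~\cite{Lawson} now yields that $\sim$, defined by $m\sim n$ iff $m^*m=n^*n$ and $mn^*\in K$, is an idempotent separating congruence on $N$ with kernel $K$. The statement in the proposition writes the second condition as $mn'\in D(\mathscr G)$, but since $mn^*\in N$ automatically (because $N$ is an inverse semigroup), the conditions $mn^*\in K$ and $mn^*\in D(\mathscr G)$ coincide.

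It remains to check that $\sim$ respects the grading. Suppose $m\in N_g$ and $n\in N_h$ are non-zero with $m\sim n$. Then $mn^*\in R\mathscr G_{gh^{-1}}$ and also $mn^*\in D(\mathscr G)\subseteq R\mathscr G_1$. If $mn^*=0$ then, using $m^*m=n^*n$, one gets $m=m(m^*m)=m(n^*n)=(mn^*)n=0$, a contradiction; so $mn^*\neq 0$, which forces $gh^{-1}=1$ since distinct homogeneous components are $R$-linearly independent. Hence $g=h$, and in particular $\{0\}$ is its own $\sim$-class, so the partial homomorphism $N\setminus\{0\}\to G$ descends to $N/{\sim}$. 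I do not expect any serious obstacle: every step reduces to a fact already recorded in the excerpt, with only the last grading computation requiring a brief direct argument.
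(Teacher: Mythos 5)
Your proof is correct and follows essentially the same route as the paper: verify that $K=N\cap D(\mathscr G)$ is a full normal inverse subsemigroup of $N$ with $a^*a=aa^*$ (using Corollary~\ref{c:diag.inv.sub} and Remark~\ref{r:where.the.idems.are}), invoke Lawson's Lemma~5.1 to get the idempotent separating congruence with kernel $K$, and then check the grading via $mn'\in D(\mathscr G)\subseteq R\mathscr G_1$. Your explicit justification that $mn'\neq 0$ when $m,n\neq 0$ fills in a detail the paper leaves implicit, but the argument is the same.
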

%\begin{proof}
%First note that $0$ is only equivalent to itself.
%Note that $m\sim m$ by Proposition~\ref{p:are.idem}.  If $m\sim n$ with $m,n\in N_g$, then $mn'\in D(\mathscr G)$. By Corollary~\ref{c:diag.inv.sub}, we have $nm'\in D(\mathscr G)$.  Thus $n'\sim m$.  If $m\sim n$ and $n\sim s$, with $m,n\in N_g$ and $n,s\in N_h$, then % either $m=n=s=0$ or $g=h$.  In the latter case, $m'm=n'n=s's$ and $mn',ns'\in D(\mathscr G)$.  Therefore,  $ms' = mm'ms'=mn'ns'\in D(\mathscr G)$.  Thus $m\sim %s$.
%
%To see that $\sim$ is a congruence, suppose that $m\sim n$ with $m,n\in N_g$ and let $a\in N_h$. Then $am,an\in N_{hg}$ and $(am)'(am)=m'a'am=m'a'amm'm=m'a'amn'n=m'mn'a'an=n'nn'a'an=n'a'an=(an)'(an)$ using that $a'a,mn'\in D(\mathscr G)$ and $D(\mathscr G)$ is commutative. Also $am(an)' = amn'a'\in D(\mathscr G)$ as $mn'\in D(\mathscr G)$ and $a\in N$.    Thus $am\sim an$.
%
%Also we have $ma,na\in N_{gh}$,  $(ma)'(ma) = a'm'ma=a'n'na=(na)(na)'$ and $ma(na)' = maa'n'=mn'naa'n'\in D(\mathscr G)$ as $mn',naa'n'\in D(\mathscr G)$. Thus %$ma\sim na$.  This establishes that $\sim$ is a congruence.  Clearly, $\sim$ respects the grading.
%\end{proof}

\begin{Rmk}\label{r:determineNmodsim}
Observe that $N/{\sim}$ is determined up to isomorphism of $G$-graded inverse semigroups by $R\mathscr G$, taken up to graded diagonal-preserving isomorphism of rings by construction.
\end{Rmk}

We shall write $[m]$ for the class of $m$ in $N/{\sim}$.

\begin{Prop}\label{p:doesnot.ident}
Let $R$ be indecomposable.  The $G$-graded homomorphism  $\psi\colon \Bis_c^h(\mathscr G)\to N/{\sim}$ given by $\psi(U)=[\chi_U]$ is injective.  It is an isomorphism if and only if $\mathscr G$ satisfies the local bisection hypothesis.
\end{Prop}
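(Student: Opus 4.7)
The plan is to verify well-definedness and the homomorphism property first, then handle injectivity directly from the definition of $\sim$, and finally argue the iff by explicit computation with supports.

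To get started, $\psi$ is a well-defined zero-preserving $G$-graded semigroup homomorphism: if $U\subseteq \mathscr G_g$ is a compact open local bisection then $\chi_U\in N_g$ with $(\chi_U)'=\chi_{U\inv}$, one has $\chi_U\chi_V=\chi_{UV}$ in $R\mathscr G$, and the grading on $N$ sends $\chi_U$ to $g$ by construction. For injectivity, suppose $\chi_U\sim \chi_V$. Unwinding the definition of $\sim$ yields $\chi_{\dom(U)}=\chi_U'\chi_U=\chi_V'\chi_V=\chi_{\dom(V)}$, hence $\dom(U)=\dom(V)$, and $\chi_{UV\inv}=\chi_U\chi_V'\in D(\mathscr G)$, hence $UV\inv\subseteq \mathscr G\skel 0$. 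For each $x\in \dom(U)=\dom(V)$, the local bisection property picks out unique $\alpha\in U$ and $\beta\in V$ with $\dom(\alpha)=x=\dom(\beta)$, and $\alpha\beta\inv\in \mathscr G\skel 0$ forces $\alpha=\beta$; so $U=V$.

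For the equivalence, first suppose the local bisection hypothesis holds and take $m\in N_g$. Let $U=\supp(m)$; since $m$ is locally constant and compactly supported with discrete codomain $R$, $U$ is a compact open subset of $\mathscr G_g$, and by hypothesis it is a local bisection, so $U\in\Bis_c^h(\mathscr G)$. Proposition~\ref{p:the.idems} yields $m'm=\chi_{\dom(U)}=\chi_U'\chi_U$. Using the convolution formula, $(m\chi_{U\inv})(\gamma)=\sum_{\dom(\alpha)=\dom(\gamma)} m(\gamma\alpha\inv)\chi_{U\inv}(\alpha)$; the only potentially nonzero term has $\alpha=\beta\inv$ for the unique $\beta\in U$ with $\ran(\beta)=\dom(\gamma)$, and nonvanishing requires $\gamma\beta\in U=\supp(m)$, in which case $\dom(\gamma\beta)=\dom(\beta)$ combined with $\dom$-injectivity on $U$ forces $\gamma\beta=\beta$, i.e., $\gamma\in\mathscr G\skel 0$. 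Hence $m\chi_U'\in D(\mathscr G)$, so $m\sim\chi_U$ and $[m]=\psi(U)$.

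Conversely, suppose $\psi$ is surjective and let $m\in N_g$; pick $U$ with $[m]=[\chi_U]$. Then $m'm=\chi_{\dom(U)}$ combined with Proposition~\ref{p:the.idems} gives $\dom(\supp m)=\dom(U)$, and $m\chi_{U\inv}\in D(\mathscr G)$. For any $\delta\in\supp(m)$, let $\beta$ be the unique element of $U$ with $\dom(\beta)=\dom(\delta)$ and put $\gamma=\delta\beta\inv$; the same convolution computation as above gives $(m\chi_{U\inv})(\gamma)=m(\delta)\neq 0$, so $\gamma\in\mathscr G\skel 0$, which forces $\delta=\beta\in U$. Thus $\supp(m)\subseteq U$, and combined with $\dom(\supp m)=\dom(U)$ and the fact that $U$ is a local bisection, $\supp(m)=U$ is a local bisection. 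Hence $\mathscr G$ satisfies the local bisection hypothesis.

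The main technical obstacle is the convolution bookkeeping in the last two paragraphs, but this is straightforward once one exploits the local bisection property of $U$ to collapse the convolution sum to a single term; everything else is formal consequence of Propositions~\ref{p:is.inverse} and~\ref{p:the.idems}.
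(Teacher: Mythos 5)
Your proof is correct and follows essentially the same route as the paper: both directions of the equivalence reduce to Proposition~\ref{p:the.idems} together with the observations that $m\sim\chi_{\supp(m)}$ under the local bisection hypothesis and that $m\sim\chi_V$ forces $\supp(m)\subseteq V$. The only cosmetic differences are that you establish injectivity by a direct computation with supports where the paper invokes the abstract fact that an idempotent-separating, idempotent-pure congruence restricts to equality, and you expand the convolution sums explicitly where the paper uses the containment $\supp(fg)\subseteq\supp(f)\supp(g)$.
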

\begin{proof}
The congruence $\sim$ is idempotent separating.  Its restriction to $\Bis_c^h(\mathscr G)$ is idempotent pure because if $\chi_U\in K=N\cap D(\mathscr G)$, then $U\subseteq \mathscr G_0$. It follows that $\psi$ is idempotent separating and idempotent pure, hence injective.

Suppose now that $\mathscr G$ satisfies the local bisection hypothesis and let $m\in N_g$.  Then the support $V$ of $m$ is a compact local bisection contained in $c\inv(g)$.  If $m'$ is as per \eqref{eq:normalizer}, then $m'm=\chi_{\dom(\supp(m))}=\chi_{V\inv V}=\chi_V'\chi_V$ by Proposition~\ref{p:the.idems}.  Also $m\chi_V'=m\chi_{V\inv}$ has support contained in $\supp(m)V\inv = VV\inv\subseteq \mathscr G\skel 0$ and so $m\chi_V'\in D(\mathscr G)$.  Thus $m\sim \chi_V$ and so $[m]=\psi(\chi_V)$.

Conversely, if $\psi$ is surjective and $m\in N_g$, then there exists $V\in \Bis_c^h(\mathscr G)_g$ with $m\sim \chi_V$. Let $m'$ be as per \eqref{eq:normalizer}.   Then $m'm= \chi_{V\inv}\chi_V$ and $m\chi_{V\inv}\in D(\mathscr G)$.  So $m=mm'm=m\chi_{V\inv}\chi_V=f\chi_V$ with $f\in D(\mathscr G)$.  Thus $\supp(m)\subseteq V$ and hence $\supp(m)$ is a local bisection.	
%If $\chi_U\sim \chi_V$, then $U\inv U=V\inv V$ and $UV\inv \subseteq \mathscr G\skel 0$.  Then $U=UU\inv U=UV\inv V\subseteq V$ and, similarly, we have that  %$V=VV\inv V=VU\inv U = (UV\inv)\inv U\subseteq U$ and so $U=V$.
\end{proof}

\begin{Rmk}\label{r:nature.of.proj}
The previous argument shows that if $\mathscr G$ satisfies the local bisection hypothesis, then $m\sim \chi_{\supp(m)}$ for $m\in N$.
\end{Rmk}

We now aim to show that if $\mathscr G$ and $\mathscr G'$ are ample $G$-graded Hausdorff groupoids such that $\mathscr G$ satisfies the local bisection hypothesis and there is a diagonal-preserving ring isomorphism $\Phi\colon R\mathscr G\to R\mathscr G'$, then $\mathscr G'$ also satisfies the local bisection hypothesis.   The argument is loosely inspired by what happens in the special case of groups~\cite[Chpt.~14, Thm~3.1]{PassmanBook}.

\begin{Thm}\label{t:robust}
Suppose that $R$ is an indecomposable commutative ring with unit and that $\mathscr G_1$, $\mathscr G_2$ are ample $G$-graded Hausdorff groupoids. If $\mathscr G_2$ satisfies the local bisection hypothesis and there is a graded diagonal-preserving ring isomorphism $\Phi\colon R\mathscr G_1\to R\mathscr G_2$, then $\mathscr G_1$ satisfies the local bisection hypothesis.	
\end{Thm}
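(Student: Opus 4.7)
My plan is to show the canonical inclusion $\psi_1\colon \Bis_c^h(\mathscr G_1)\to N_1/{\sim_1}$ is surjective (where $N_1$ is the graded normalizer of $D(\mathscr G_1)$); by Proposition~\ref{p:doesnot.ident} this is equivalent to the local bisection hypothesis for $\mathscr G_1$. Since the graded normalizer, its diagonal kernel $N_i\cap D(\mathscr G_i)$, and the idempotent-separating congruence $\sim$ of Section~\ref{s:recons} are defined purely from ring-theoretic data together with the grading and the diagonal subalgebra, the graded diagonal-preserving ring isomorphism $\Phi$ descends to a $G$-graded inverse semigroup isomorphism $\overline\Phi\colon N_1/{\sim_1}\to N_2/{\sim_2}$. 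The hypothesis that $\mathscr G_2$ satisfies the local bisection hypothesis, together with Proposition~\ref{p:doesnot.ident}, identifies $N_2/{\sim_2}$ with $\Bis_c^h(\mathscr G_2)$ via $\psi_2$. Composing yields an injective $G$-graded inverse semigroup homomorphism
\[
\theta := \psi_2\inv\circ\overline\Phi\circ\psi_1\colon \Bis_c^h(\mathscr G_1)\longrightarrow \Bis_c^h(\mathscr G_2),\qquad \theta(U)=\supp(\Phi(\chi_U)),
\]
and it suffices to prove that $\theta$ is surjective, for then $\psi_1=\overline\Phi\inv\circ\psi_2\circ\theta$ will be bijective.

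To prove surjectivity, fix $V\in \Bis_c^h(\mathscr G_2)$ with $V\subseteq c\inv(g)$, and write $\Phi\inv(\chi_V) = \sum_{j=1}^n a_j\chi_{U_j}$ with $a_j\in R\setminus\{0\}$ and $U_j\in \Bis_c^h(\mathscr G_1)$ pairwise disjoint and contained in $c\inv(g)$ (possible by local constancy, compactness of support, and the grading of $\Phi$). Writing $a_j\chi_{U_j} = (a_j\chi_{\ran(U_j)})\chi_{U_j}$ expresses each scalar as left multiplication by an element of $D(\mathscr G_1)$, and applying $\Phi$ together with the observation that left multiplication by a diagonal element cannot enlarge support yields
\[
V=\supp(\chi_V)\subseteq\bigcup_{j=1}^n\supp(\Phi(\chi_{U_j}))=\bigcup_{j=1}^n\theta(U_j).
\]
Let $\phi\colon\mathscr G_2\skel 0\to \mathscr G_1\skel 0$ be the Stone dual of the Boolean algebra isomorphism induced by $\Phi|_{D(\mathscr G_1)}$ on idempotents; then $\theta$ sends the idempotent $A\in\mathcal B(\mathscr G_1\skel 0)$ to $\phi\inv(A)$, and consequently $\theta(U\cdot A)=\theta(U)|_{\phi\inv(A)}$. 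Partition $V$ by the (finitely many, compact open) non-empty subsets $V_J := V\cap\bigcap_{j\in J}\theta(U_j)\cap\bigcap_{j\notin J}\theta(U_j)^c$ for $J\subseteq\{1,\ldots,n\}$. For any $j\in J$, since $V_J\subseteq\theta(U_j)$ and $\theta(U_j)$ is a local bisection, $V_J=\theta(U_j)|_{\dom(V_J)}=\theta(U_j|_{\phi(\dom(V_J))})$, so $U_J := U_j|_{\phi(\dom(V_J))}\in\Bis_c^h(\mathscr G_1)$ is well-defined independently of $j\in J$ by the injectivity of $\theta$ and satisfies $\theta(U_J)=V_J$. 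The domains $\phi(\dom(V_J))$ are pairwise disjoint (because the $\dom(V_J)$ are disjoint subsets of $\dom(V)$), so $U := \bigsqcup_J U_J$ is a compact local bisection in $c\inv(g)$. Since the $V_J$ are disjoint and each $\Phi(\chi_{U_J})$ takes values in $R^\times$ on $V_J$ (as $\chi_{U_J}\in N_1$ with $\supp(\Phi(\chi_{U_J}))=V_J$ a local bisection), no cancellation occurs in $\Phi(\chi_U)=\sum_J\Phi(\chi_{U_J})$, yielding $\theta(U)=\supp(\Phi(\chi_U))=\bigsqcup_J V_J=V$.

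The principal technical obstacle is the surjectivity of $\theta$: decomposing $\Phi\inv(\chi_V)$ and applying $\Phi$ directly only shows that $V$ is covered by the images $\theta(U_j)$, not that $V$ is itself such an image. Upgrading this covering to an equality requires partitioning $V$ into the refinements $V_J$ and exploiting that $\theta$ intertwines the restriction operation $U\mapsto U|_A$ via the Stone-dual homeomorphism $\phi$, so that each $V_J$ is recognized as the $\theta$-image of $U_j|_{\phi(\dom(V_J))}$; these restricted preimages can then be joined along pairwise disjoint domains to produce a single $U\in\Bis_c^h(\mathscr G_1)$ whose $\Phi$-image has no cancellation and recovers $V$ on the nose.
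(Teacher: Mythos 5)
Your overall strategy is sound and it reaches the same intermediate target as the paper, namely surjectivity of $\psi_1\colon \Bis_c^h(\mathscr G_1)\to N_1/{\sim}$, which by Proposition~\ref{p:doesnot.ident} is equivalent to the local bisection hypothesis. But the route is genuinely different in two respects. First, you skip the paper's opening reduction (via Corollary~\ref{c:isotropy.pres} and Proposition~\ref{p:reduced.to.iso}) to trivially graded group bundles and work directly with the graded groupoids; nothing in your argument appears to need that reduction, so this is a legitimate streamlining. Second, and more substantively, the paper covers $V$ by supports of elements of $T=\Phi(\Bis_c(\mathscr G_1))$ that are \emph{contained in} $V$ (cutting down inside $\Bis_c(\mathscr G_2)$ and using that the full inverse subsemigroup $\psi_2(T)$ is an order ideal), and then reassembles $V$ as a \emph{join}, invoking the distributivity machinery of Subsection~\ref{ss:distributive} to see that $\psi_2(T)$ is closed under joins. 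You instead disjointify the cover $V\subseteq\bigcup_j\theta(U_j)$ explicitly, pull each piece back through the restriction operation intertwined by the Stone-dual homeomorphism $\phi$, and reassemble by a bare support computation. Your version trades the abstract join argument for a concrete partition and an explicit preimage bisection, which is arguably more elementary; your decomposition of $\Phi\inv(\chi_V)$ also delivers a finite cover for free, where the paper covers pointwise and then invokes compactness.

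There is one step whose justification, as written, is insufficient: you assert that $U=\bigsqcup_J U_J$ is a compact local bisection \emph{because} the domains $\phi(\dom(V_J))$ are pairwise disjoint. Disjointness of domains only makes $\dom|_U$ injective; a union of local bisections with pairwise disjoint domains can fail to be a local bisection when two pieces share a range point, and this is not excluded by the $U_J$ sitting inside the pairwise disjoint $U_j$, since distinct $U_j$ may well have overlapping ranges. You do need $\ran(U_J)\cap\ran(U_{J'})=\emptyset$ for $J\neq J'$. Fortunately this follows from tools you already have: $\theta(U_J^{-1}U_{J'})=\theta(U_J)^{-1}\theta(U_{J'})=V_J^{-1}V_{J'}=\emptyset$, because $V_J$ and $V_{J'}$ are disjoint subsets of the local bisection $V$ and hence have disjoint ranges; since $\theta$ is injective and sends $\emptyset$ to $\emptyset$, we get $U_J^{-1}U_{J'}=\emptyset$, i.e.\ $\ran(U_J)\cap\ran(U_{J'})=\emptyset$. (Equivalently, the $U_J$ are pairwise compatible because their images under the injective homomorphism $\theta$ are.) With that sentence inserted, the final computation $\supp(\Phi(\chi_U))=\bigsqcup_J V_J=V$ goes through — indeed disjointness of the supports alone already rules out cancellation, so the observation about unit values on $V_J$ is not needed.
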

\begin{proof}
Let us denote the identity of $G$ by $e$.  First note that, as $(R\mathscr G_i)_e=R(\mathscr G_i)_e$, for $i=1,2$, it follows that $\Phi$ restricts to a diagonal-preserving ring isomorphism $R(\mathscr G_1)_e\to R(\mathscr G_2)_e$.   But then, by Corollary~\ref{c:isotropy.pres}, if $\mathscr H_i$ is the interior of the isotropy bundle of $(\mathscr G_i)_e$ for $i=1,2$, then $\Phi$ restricts to a diagonal-preserving ring isomorphism $R\mathscr H_1\to R\mathscr H_2$.
Thus we can assume without loss of generality, by Proposition~\ref{p:reduced.to.iso}, that $\mathscr G_1,\mathscr G_2$ are group bundles and the grading is trivial.  Then $R\mathscr G_i$ is a $D(\mathscr G_i)$-algebra, for $i=1,2$, by Proposition~\ref{p:z.of.diag}.

Let $N_i$  be the normalizer of the diagonal subalgebra in $\mathscr G_i$ and $\psi_i\colon N_i\to N_i/{\sim}$ the canonical projection, for $i=1,2$.  By Proposition~\ref{p:doesnot.ident}, it suffices to prove that $\psi_1(\Bis_c(\mathscr G_1))=N_1/{\sim}$.  Observe that $\Phi$, being diagonal preserving, induces an isomorphism $\p\colon N_1/{\sim}\to N_2/{\sim}$ such that
\[
\begin{tikzcd}
N_1\ar{r}{\Phi}\ar{d}[swap]{\psi_1} & N_2\ar{d}{\psi_2}\\ N_1/{\sim}\ar{r}[swap]{\p} & N_2/{\sim}
\end{tikzcd}
\]
commutes.  Let $T=\Phi(\Bis_c(\mathscr G_1))$.  It then suffices to show that $\psi_2(T)=N_2/{\sim}$.  Note that $\psi_2(T)$ is full in $N_2/{\sim}$ since $\psi_1(\Bis_c(\mathscr G_1))$ is full in $N_1/{\sim}$.

First observe that $\Bis_c(\mathscr G_1)$ spans $R\mathscr G_1$ over $D(\mathscr G_1)$.  Indeed, if $f\in R\mathscr G_1$, we can write $f=\sum_{i=1}^n c_i\chi_{U_i}$ with the $c_i\in R$ and $U_i\in \Bis_c(\mathscr G_1)$.  Then $g_i=c_i\chi_{U_iU_i\inv}\in D(\mathscr G_1)$ and $f=\sum_{i=1}^n g_i\chi_{U_i}$.
It follows that $T$ spans $R\mathscr G_2$ over $D(\mathscr G_2)$.

By Remark~\ref{r:nature.of.proj}, we have that $\psi_2(m)=\psi_2(\chi_{\supp(m)})$ for $m\in N_2$. So it suffices to show that if $V\in \Bis_c(\mathscr G_2)$, then $\psi_2(\chi_V)\in \psi_2(T)$.
If $\gamma\in V$, then since $\chi_V$ is in the $D(\mathscr G_2)$-span of $T$, there is $m\in T$ with $m(\gamma)\neq 0$. Now $U=\supp(m)\in \Bis_c(\mathscr G_2)$ by the local bisection hypothesis.  Hence $\gamma\in U\cap V$ and $U\cap V\in \Bis_c(\mathscr G_2)$ since $\mathscr G_2$ is Hausdorff.  Notice that $\psi_2(\chi_{U\cap V})\leq \psi_2(\chi_U)=\psi_2(m)$ and hence, since $\psi_2(T)$ is full and thus an order ideal, $\psi_2(\chi_{U\cap V})\in \psi_2(T)$.  Thus we can find $n\in T$ with $\gamma\in \supp(n)\subseteq V$ for each $\gamma\in V$.

By compactness of $V$, we can then find $m_1,\ldots, m_k\in T$  such that $V=\bigcup_{i=1}^k U_i$ with $U_i=\supp(m_i)$.  Since $\psi_2\colon \Bis_c(\mathscr G_2)\to N_2/{\sim}$ is an isomorphism by Proposition~\ref{p:doesnot.ident} and  $\Bis_c(\mathscr G_2)$ is distributive,  we conclude that $N_2/{\sim}$ is distributive and  $\psi_2(\chi_V) = \bigvee_{i=1}^k \psi_2(\chi_{U_i}) = \bigvee_{i=1}^k\psi_2(m_i)$.  But $\psi_2(T)$ is full in $N_2/{\sim}$ and distributive (being isomorphic to $\Bis_c(\mathscr G_1)$), and hence is closed under joins, as discussed in Subsection~\ref{ss:distributive}.  We conclude that $\psi_2(\chi_V)\in \psi_2(T)$.  This concludes the proof that $\mathscr G_1$ satisfies the local bisection hypothesis.
\end{proof}

We can now state the main theorem of this section.

\begin{Thm}\label{t:construct.from.bisection}
Let $R$ be an indecomposable commutative ring with unit and let $\mathscr G$ and $\mathscr G'$ be $G$-graded Hausdorff ample groupoids such that $\mathscr G$ satisfies the local bisection hypothesis. Then the following are equivalent.
\begin{enumerate}
\item There is a graded isomorphism $\p\colon \mathscr G\to \mathscr G'$.
\item There is a diagonal-preserving graded isomorphism $\Phi\colon R\mathscr G\to R\mathscr G'$ of $R$-algebras.
\item There is a diagonal-preserving graded isomorphism $\Phi\colon R\mathscr G\to R\mathscr G'$ of rings.
\end{enumerate}
\end{Thm}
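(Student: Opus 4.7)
The implications $(1)\Rightarrow(2)\Rightarrow(3)$ are essentially free: the first was observed just after the definition of diagonal-preserving isomorphism (map $f$ to $f\circ\pinv$, which is graded because $\p$ is, and diagonal-preserving because $\p$ restricts to a homeomorphism of object spaces), while the second is trivial since any $R$-algebra isomorphism is in particular a ring isomorphism. The real content is $(3)\Rightarrow(1)$, which is what I plan to address.

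So assume we have a diagonal-preserving graded ring isomorphism $\Phi\colon R\mathscr G\to R\mathscr G'$. My first move is to apply Theorem~\ref{t:robust} with the roles $\mathscr G_2=\mathscr G$ and $\mathscr G_1=\mathscr G'$ (using the isomorphism $\Phi\inv$, which is still graded and diagonal-preserving) to deduce that $\mathscr G'$ also satisfies the local bisection hypothesis. This puts both groupoids on the same footing, which is crucial because the reconstruction machinery developed in the section only works under the local bisection hypothesis.

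Next, I observe that $\Phi$ sends the normalizer $N$ of $D(\mathscr G)$ to the normalizer $N'$ of $D(\mathscr G')$ (it is diagonal-preserving and hence preserves the conditions defining the $N_g$), and respects the grading by hypothesis; moreover, since $\Phi$ sends $N\cap D(\mathscr G)$ to $N'\cap D(\mathscr G')$, it descends to a $G$-graded inverse semigroup isomorphism $\ov\Phi\colon N/{\sim}\to N'/{\sim}$, as recorded in Remark~\ref{r:determineNmodsim}. Now I invoke Proposition~\ref{p:doesnot.ident}: since both $\mathscr G$ and $\mathscr G'$ satisfy the local bisection hypothesis, the canonical maps $\psi\colon \Bis_c^h(\mathscr G)\to N/{\sim}$ and $\psi'\colon \Bis_c^h(\mathscr G')\to N'/{\sim}$ are $G$-graded isomorphisms. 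Composing with $\ov\Phi$ yields a $G$-graded inverse semigroup isomorphism $\theta\colon \Bis_c^h(\mathscr G)\to \Bis_c^h(\mathscr G')$.

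Finally, I appeal to Theorem~\ref{t:graded.reconstruct.inv.sgp}, which gives canonical $G$-graded isomorphisms
\[\mathscr G\cong \Bis_c^h(\mathscr G)\ltimes \wh{E(\Bis_c^h(\mathscr G))},\qquad \mathscr G'\cong \Bis_c^h(\mathscr G')\ltimes \wh{E(\Bis_c^h(\mathscr G'))}.\]
The isomorphism $\theta$ restricts to a generalized Boolean algebra isomorphism on the idempotent semilattices, hence (by Stone duality) induces a homeomorphism $\wh{E(\Bis_c^h(\mathscr G))}\to \wh{E(\Bis_c^h(\mathscr G'))}$ intertwining the spectral actions, and therefore an isomorphism of the corresponding groupoids of germs preserving the grading defined just before Proposition~\ref{p:full.cofinal}. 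Transporting through the two isomorphisms from Theorem~\ref{t:graded.reconstruct.inv.sgp} produces a graded isomorphism $\p\colon \mathscr G\to \mathscr G'$, completing $(3)\Rightarrow(1)$. The main potential obstacle is verifying that all of this is genuinely functorial in the graded category, but everything needed is already encoded in the constructions: $\sim$ respects the grading (Proposition~\ref{p:is.equiv}), $\psi$ and $\psi'$ are graded (Proposition~\ref{p:doesnot.ident}), and the groupoid-of-germs construction is manifestly functorial on graded inverse semigroup isomorphisms since the grading of a germ depends only on the partial homomorphism, which is preserved.
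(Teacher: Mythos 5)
Your proposal is correct and follows essentially the same route as the paper's proof: transfer the local bisection hypothesis to $\mathscr G'$ via Theorem~\ref{t:robust}, pass to the induced graded isomorphism $N/{\sim}\to N'/{\sim}$ (Remark~\ref{r:determineNmodsim}), identify both quotients with the inverse semigroups of homogeneous compact local bisections via Proposition~\ref{p:doesnot.ident}, and conclude with Theorem~\ref{t:graded.reconstruct.inv.sgp}. Your extra care in orienting the roles of $\mathscr G_1,\mathscr G_2$ when invoking Theorem~\ref{t:robust} and in spelling out the germ-groupoid functoriality is sound but not a departure from the paper's argument.
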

\begin{proof}
Trivially, (1) implies (2) implies (3).  Suppose that (3) holds.  Then $\mathscr G'$ satisfies the local bisection hypothesis by Theorem~\ref{t:robust}.  Let $N, N'$ be the normalizers of the diagonal subalgebra in $R\mathscr G, R\mathscr G'$, respectively.  As $\Phi$ is a diagonal-preserving  graded ring isomorphism, it follows that it induces a graded isomorphism of inverse semigroups $N/{\sim}\to N'/{\sim}$ by Remark~\ref{r:determineNmodsim}. Thus $\Bis_c^h(\mathscr G)\cong \Bis_c^h(\mathscr G')$ as $G$-graded inverse semigroups by Proposition~\ref{p:doesnot.ident}.  We conclude that $\mathscr G\cong \mathscr G'$ as $G$-graded groupoids by Theorem~\ref{t:graded.reconstruct.inv.sgp}.
\end{proof}

As a consequence of Theorem~\ref{t:local.bis.from.iso} and Theorem~\ref{t:construct.from.bisection} we may now state the main result of this paper.

\begin{Thm}\label{t:main}
Let $R$ be an indecomposable commutative ring with unit and let $\mathscr G$, $\mathscr G'$ be Hausdorff ample $G$-graded groupoids. Suppose that $\mathscr G$ has a dense set of objects $x$ such that the group algebra over $R$ of the isotropy group at $x$ of the interior of the isotropy bundle of the homogeneous component of $1$ has no non-trivial units.  Then the following are equivalent.
\begin{enumerate}
\item There is a graded isomorphism $\p\colon \mathscr G\to \mathscr G'$.
\item There is a diagonal-preserving graded isomorphism $\Phi\colon R\mathscr G\to R\mathscr G'$ of $R$-algebras.
\item There is a diagonal-preserving graded isomorphism $\Phi\colon R\mathscr G\to R\mathscr G'$ of rings.
\end{enumerate}
\end{Thm}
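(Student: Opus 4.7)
The plan is to deduce this theorem as a straightforward corollary of Theorem~\ref{t:local.bis.from.iso} combined with Theorem~\ref{t:construct.from.bisection}. The density hypothesis in the statement of Theorem~\ref{t:main} is, by design, precisely the hypothesis of Theorem~\ref{t:local.bis.from.iso}: the isotropy group of $\mathscr G$ at $x$ in the interior of the isotropy bundle of $\mathscr G_1$ is exactly what Theorem~\ref{t:local.bis.from.iso} calls $H_x$, and in both statements we require that $RH_x$ has no non-trivial units on a dense subset of $\mathscr G\skel 0$.

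Concretely, I would proceed as follows. First, invoke Theorem~\ref{t:local.bis.from.iso} directly to conclude that $\mathscr G$ satisfies the local bisection hypothesis relative to $R$ and the given $G$-grading. Second, feed this into Theorem~\ref{t:construct.from.bisection}: since $R$ is indecomposable, $\mathscr G$ and $\mathscr G'$ are $G$-graded Hausdorff ample groupoids, and $\mathscr G$ now satisfies the local bisection hypothesis, Theorem~\ref{t:construct.from.bisection} gives the equivalence of (1), (2), and (3) immediately.

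There is essentially no obstacle in this proof; the real work was done earlier in establishing the local bisection hypothesis for $\mathscr G$ from a density condition on its isotropy (Theorem~\ref{t:local.bis.from.iso}), in proving that the hypothesis is robust under diagonal-preserving ring isomorphisms (Theorem~\ref{t:robust}, used inside Theorem~\ref{t:construct.from.bisection}), and in showing that the normalizer modulo its diagonal part recovers $\Bis_c^h(\mathscr G)$ (Proposition~\ref{p:doesnot.ident}), after which Theorem~\ref{t:graded.reconstruct.inv.sgp} reconstructs the graded groupoid. It is worth noting that the hypothesis is imposed only on $\mathscr G$ and not on $\mathscr G'$, reflecting the one-sided nature of Theorem~\ref{t:robust}, which transports the local bisection hypothesis across a diagonal-preserving graded ring isomorphism. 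Thus the proof amounts to a single, short chain of implications: density of good isotropy $\Longrightarrow$ local bisection hypothesis on $\mathscr G$ $\Longrightarrow$ equivalence of (1)--(3).
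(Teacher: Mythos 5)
Your proposal is correct and is exactly the paper's own argument: the paper derives Theorem~\ref{t:main} as an immediate consequence of Theorem~\ref{t:local.bis.from.iso} (density of good isotropy implies the local bisection hypothesis for $\mathscr G$) followed by Theorem~\ref{t:construct.from.bisection}. Nothing further is needed.
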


The hypotheses of Theorem~\ref{t:main} are met by a groupoid $\mathscr G$ if $\mathscr G_1$ is effective (and, in particular, if $\mathscr G_1$ is topologically principal), so we recover the main results of~\cite{reconstruct}. In~\cite{CarlsenSteinberg}, Carlsen and Rout prove a version of Theorem~\ref{t:main} under the stronger hypotheses that $R$ is an integral domain and that there is a dense set of objects such $x$ such that the group algebra of the isotropy group at $x$ in $\mathscr G_1$ has no zero divisors and no non-trivial units.  So our results imply those of~\cite{CarlsenSteinberg}.  Note that if $\mathscr G_1$ is not effective, then $R$ must be reduced for the hypotheses of the theorem to apply.    Of course, Theorem~\ref{t:main} applies to the ungraded setting by taking $G$ to be trivial.

For effective groupoids, using that the diagonal subalgebra is a maximal commutative subring, we can, in fact, formulate a slightly stronger result.
Since topologically principal groupoids are effective~\cite{operatorsimple1}, the following theorem recovers, and extends, the main result of~\cite{reconstruct}.

\begin{Cor}\label{c:effective.case.gpd.rec}
Let $R$ be an indecomposable commutative ring with unit and let $\mathscr G$ and $\mathscr G'$ be $G$-graded Hausdorff ample groupoids.  Suppose that $\mathscr G_1$ is effective.  Then the following are equivalent.
\begin{enumerate}
\item There is a graded isomorphism $\p\colon \mathscr G\to \mathscr G'$.
\item There is a diagonal-preserving graded isomorphism $\Phi\colon R\mathscr G\to R\mathscr G'$ of $R$-algebras.
\item There is a diagonal-preserving graded isomorphism $\Phi\colon R\mathscr G\to R\mathscr G'$ of rings.
\item There is a graded $R$-algebra isomorphism $\Phi\colon R\mathscr G\to R\mathscr G'$ with $\Phi(D(\mathscr G))\subseteq D(\mathscr G')$.
\item There is a graded ring isomorphism $\Phi\colon R\mathscr G\to R\mathscr G'$ with $\Phi(D(\mathscr G))\subseteq D(\mathscr G')$.
\end{enumerate}
\end{Cor}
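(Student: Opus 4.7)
The plan is to observe that (1), (2), (3) are already equivalent via Theorem~\ref{t:main}, that (1) $\Rightarrow$ (2) $\Rightarrow$ (4) $\Rightarrow$ (5) and (3) $\Rightarrow$ (5) are trivial, and then do the one nontrivial implication (5) $\Rightarrow$ (3) by exploiting that the diagonal is already determined up to equality (not just containment) among commutative subrings of the identity component.

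First, I would verify (1) $\Leftrightarrow$ (2) $\Leftrightarrow$ (3) as an immediate instance of Theorem~\ref{t:main}. Indeed, the assumption that $\mathscr G_1$ is effective means that the interior of the isotropy bundle of $\mathscr G_1$ is exactly $\mathscr G\skel 0$, so the isotropy group $H_x$ at every object $x\in \mathscr G\skel 0$ is trivial. Then $RH_x=R$, whose only units are the elements of $R^\times=R^\times H_x$, so the no-non-trivial-units condition holds at \emph{every} point. Thus the density hypothesis of Theorem~\ref{t:main} is satisfied vacuously and the three conditions are equivalent.

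Next, the implications (2) $\Rightarrow$ (4) $\Rightarrow$ (5) and (3) $\Rightarrow$ (5) are immediate from the definitions. The substantive content is therefore to prove (5) $\Rightarrow$ (3), that is, to show that any graded ring isomorphism $\Phi\colon R\mathscr G\to R\mathscr G'$ satisfying $\Phi(D(\mathscr G))\subseteq D(\mathscr G')$ automatically has $\Phi(D(\mathscr G))=D(\mathscr G')$. The crux is that because $\Phi$ is graded, it restricts to a ring isomorphism $\Phi|_{R\mathscr G_1}\colon R\mathscr G_1\to R\mathscr G'_1$ of the identity components (here $1$ denotes the identity of $G$). Since $D(\mathscr G)=D(\mathscr G_1)$ as both are built on the common unit space $\mathscr G\skel 0$, and $\mathscr G_1$ is effective by hypothesis, Corollary~\ref{c:effective.diag.det} applied to the (ungraded) groupoid $\mathscr G_1$ tells us that $D(\mathscr G)$ is a maximal commutative subring of $R\mathscr G_1$. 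Transporting this maximality across the isomorphism $\Phi|_{R\mathscr G_1}$, we conclude that $\Phi(D(\mathscr G))$ is a maximal commutative subring of $R\mathscr G'_1$. Now $\Phi(D(\mathscr G))\subseteq D(\mathscr G')\subseteq R\mathscr G'_1$ and $D(\mathscr G')$ is commutative, so maximality forces $\Phi(D(\mathscr G))=D(\mathscr G')$, which is precisely condition (3).

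I do not anticipate a real obstacle here; the argument is essentially a packaging step. The one subtle point worth flagging is that one must use the graded structure to restrict to $R\mathscr G_1$ before invoking Corollary~\ref{c:effective.diag.det}, since in the graded setting the diagonal need not be maximal commutative in the full algebra $R\mathscr G$ (where higher homogeneous components may contribute to the centralizer), but it \emph{is} maximal commutative inside the identity component, which is all that the argument requires.
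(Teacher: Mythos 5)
Your proposal is correct and follows essentially the same route as the paper: the paper also reduces (1)--(3) to its reconstruction theorem (via Corollary~\ref{c:effective.case}, which is equivalent to your verification that the hypotheses of Theorem~\ref{t:main} hold when $\mathscr G_1$ is effective), and its proof of (5) $\Rightarrow$ (3) is exactly your maximal-commutativity argument using Corollary~\ref{c:effective.diag.det} applied to the identity component. Your remark that one must restrict to $R\mathscr G_1$ before invoking maximality is a correct and worthwhile clarification of a step the paper leaves implicit.
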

\begin{proof}
The equivalence of (1)--(3) follows from Theorem~\ref{t:construct.from.bisection} and Corollary~\ref{c:effective.case}. Clearly the fourth item implies the fifth.  Suppose the fifth item holds.  By Corollary~\ref{c:effective.diag.det},  $D(\mathscr G)$ is a maximal commutative subring of $R\mathscr G_1$ and hence $\Phi(D(\mathscr G))$ is a maximal commutative subring of $R\mathscr G'_1$.  But $\Phi(D(\mathscr G))\subseteq D(\mathscr G')\subseteq R\mathscr G'_1$ and $D(\mathscr G')$ is commutative.  Thus $\Phi(D(\mathscr G))=D(\mathscr G')$ and so the third item holds.
\end{proof}

Notice that in~\cite{reconstruct} both groupoids are assumed to have a topologically principle homogeneous component of $1$, whereas  our result only requires that $\mathscr G_1$ is effective: no assumption is made on $\mathscr G'_1$.

In~\cite{operatorsimple1}, an example is given of an effective ample groupoid $\mathscr G$ such that each isotropy group is infinite cyclic.  If $R$ is any indecomposable ring that is not reduced, then  the group ring of each isotropy group of $\mathscr G$ has zero divisors and non-trivial units.  Nonetheless, Corollary~\ref{c:effective.case.gpd.rec} applies to $\mathscr G$.

\section{An application to Leavitt path algebras}
We end this paper with an application to Leavitt path algebras.
Let $E$ be a (directed) graph with vertex set $E\skel 0$ and edge set $E\skel 1$.  We use $\sour(e)$ for the source of an edge $e$ and $\ran(e)$ for the range, or target, of an edge.   A vertex $v$ is a \emph{sink} if $\sour\inv(v)=\emptyset$ and an \emph{infinite emitter} if $|\sour\inv(v)|=\infty$.  The length of a finite (directed) path $\alpha$ is denoted $|\alpha|$.

The \emph{Leavitt path algebra}~\cite{LeavittBook,Leavitt1,LeavittPardo,Abramssurvey} $L_R(E)$ of $E$ with coefficients in $R$ is the $R$-algebra generated by a set $\{v\in E\skel 0\}$ of pairwise orthogonal idempotents and a set of variables $\{e,e^*\mid e\in E\skel 1\}$ satisfying the relations:
\begin{enumerate}
\item $\sour(e)e=e=e\ran(e)$ for all $e\in E\skel 1$;
\item $\ran(e)e^*= e^*=e^*\sour(e)$ for all $e\in E\skel 1$;
\item $e^*e'=\delta_{e,e'}\ran(e)$ for all $e,e'\in E\skel 1$;
\item $v=\sum_{e\in \sour^{-1}(v)} ee^*$ whenever $v$ is not a sink and not an infinite emitter.
\end{enumerate}

It is well known that $L_R(E)\cong R\mathscr G_E$  for the path groupoid $\mathscr G_E$ defined as follows, cf.~\cite{GroupoidMorita}.  Let $\partial E$ consist of all one-sided infinite paths in $E$ as well as all finite paths $\alpha$ ending in a vertex $v$ that is either a sink or an infinite emitter.  If $\alpha$ is a finite path in $E$ (possibly empty), put $Z(\alpha)=\{\alpha\beta\in \partial E\}$ (if $\alpha$ is the empty path $\varepsilon_v$ at $v$, this should be interpreted as those elements of $\partial E$ with initial vertex $v$).  %Note that $Z(\alpha)$ is never empty.
Then a basic open neighborhood  of $\partial E$ is of the form $Z(\alpha)\setminus (Z(\alpha e_1)\cup\cdots \cup Z(\alpha e_n))$ with $e_i\in E\skel 1$, for $i=1,\ldots n$ (and possibly $n=0$).  These neighborhoods are compact open.

The graph groupoid $\mathscr G_E$ is the given by:
\begin{itemize}
\item $\mathscr G_E\skel 0= \partial E$;
\item $\mathscr G_E\skel 1 =\{ (\alpha\gamma,|\alpha|-|\beta|,\beta\gamma)\in \partial E\times \mathbb Z\times \partial E\}\mid |\alpha|,|\beta|<\infty\}$.
\end{itemize}
One has $\dom(\eta,k,\gamma)=\gamma$, $\ran(\eta,k,\gamma)=\eta$ and $(\eta,k,\gamma)(\gamma,m,\xi) = (\eta, k+m,\xi)$.  The inverse of $(\eta,k,\gamma)$ is $(\gamma,-k,\eta)$.  The projection $c\colon \mathscr G_E\to \mathbb Z$ given by $(\eta,k,\gamma)\mapsto k$ is a continuous cocycle.

A basis of compact open subsets for the topology on $\mathscr G_E\skel 1$ can be described as follows. Let $\alpha,\beta$ be finite paths ending at the same vertex and let $U\subseteq Z(\alpha)$, $V\subseteq Z(\beta)$ be compact open with $\alpha\gamma\in U$ if and only if $\beta\gamma\in V$.  Then the set \[(U,\alpha,\beta,V)=\{\alpha\gamma,|\alpha|-|\beta|,\beta\gamma)\mid \alpha\gamma\in U,\beta\gamma\in V\}\] is a basic compact open set of $\mathscr G_E\skel 1$. Of particular importance are the compact open sets $Z(\alpha,\beta) = (Z(\alpha),\alpha,\beta,Z(\beta))= \{(\alpha\gamma,|\alpha|-|\beta|,\beta\gamma)\in \mathscr G\skel 1\}$ where $\alpha,\beta$ are finite paths ending at the same vertex.
It is well known that $\mathscr G_E$ is Hausdorff.

There is an isomorphism $L_R(E)\to R\mathscr G_E$ sending $v\in E\skel 0$ to the characteristic function of $Z(\varepsilon_v,\varepsilon_v)$ and, for $e\in E\skel 1$, sending $e$ to the characteristic function of $Z(e,\varepsilon_{\ran(e)})$ and $e^*$ to the characteristic function of $Z(\varepsilon_{\ran(e)},e)$, cf.~\cite{operatorguys1,groupoidprimitive,CRS17,Strongeffective} or~\cite[Example~3.2]{GroupoidMorita}.

By a \emph{cycle} in a directed graph $E$, we mean a simple, directed,  closed circuit.  A cycle is said to have an \emph{exit} if some vertex on the cycle has out-degree at least two.  It is well known that the isotropy group at an element $\gamma\in \partial E$ is trivial unless $\gamma$ is \emph{eventually periodic}, that is, $\gamma=\rho\alpha\alpha\cdots$ with $\alpha$ a cycle, in which case the isotropy group is infinite cyclic, cf.~\cite{gpdchain}.
Proposition~\ref{p:Neher} therefore implies that the group algebra of each isotropy group of $\mathscr G_E$ over an indecomposable reduced ring has no non-trivial units.  If $E$ satisfies condition (L), that every cycle has an exit, then $\mathscr G_E$ is well known to be topologically principal. Thus Theorem~\ref{t:main} has the following specialization to Leavitt path algebras.  It extends the results of~\cite{BCvH2017,CarlsenSteinberg} in that we weaken the integral domain hypothesis and we require only one of the groupoids to be a path groupoid.
Applications of the previous versions of this result can be found in~\cite{CRS17}.

\begin{Thm}\label{t:main.Leavitt}
Let $R$ be an indecomposable reduced commutative ring with unit and let $E$ be a graph with associated path groupoid $\mathscr G_E$. Let $\mathscr G$ be any ample Hausdorff ($\mathbb Z$-graded) groupoid.   Then the following are equivalent.
\begin{enumerate}
\item There is a (graded) isomorphism $\p\colon \mathscr G_E\to \mathscr G$.
\item There is a diagonal-preserving (graded) isomorphism $\Phi\colon L_R(E)\to R\mathscr G$ of $R$-algebras.
\item There is a diagonal-preserving (graded) isomorphism $\Phi\colon L_R(E)\to R\mathscr G$ of rings.
\end{enumerate}
The same result holds over any indecomposable commutative ring with unit $R$ if $E$ is assumed to satisfy condition (L).
\end{Thm}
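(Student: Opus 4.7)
The plan is to invoke the standard identification $L_R(E)\cong R\mathscr G_E$ recalled in the excerpt and then specialize Theorem~\ref{t:main} and Corollary~\ref{c:effective.case.gpd.rec} to the path groupoid. The isomorphism sends $v\in E\skel 0$, $e\in E\skel 1$, and $e^*$ to the characteristic functions of $Z(\varepsilon_v,\varepsilon_v)$, $Z(e,\varepsilon_{\ran(e)})$, and $Z(\varepsilon_{\ran(e)},e)$, respectively; under it the diagonal subalgebra of $L_R(E)$ corresponds to $C_c(\partial E,R)$, and the usual $\mathbb Z$-grading of $L_R(E)$ by $|\alpha|-|\beta|$ matches the grading of $R\mathscr G_E$ induced by the cocycle $c$. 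Thus statements (1)--(3) are exactly the analogous statements for the pair $(\mathscr G_E,\mathscr G)$.

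The central observation for the $\mathbb Z$-graded case is that $(\mathscr G_E)_1=c\inv(0)$ is always principal. An isotropy arrow $(\gamma,0,\gamma)$ in $(\mathscr G_E)_1$ requires finite paths $\alpha,\beta$ with $\gamma=\alpha\delta=\beta\delta$ and $|\alpha|=|\beta|$; equal lengths together with the common suffix $\delta$ force $\alpha=\beta$, so the arrow is already the identity at $\gamma$. Hence $(\mathscr G_E)_1$ is effective, and Corollary~\ref{c:effective.case.gpd.rec} applies directly, giving the graded equivalences over any indecomposable $R$ with no further hypothesis on $E$ or $R$.

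For the ungraded case, $(\mathscr G_E)_1$ is the full groupoid $\mathscr G_E$. As recalled in the excerpt, the isotropy group at each $\gamma\in\partial E$ is either trivial or infinite cyclic (the latter occurring precisely when $\gamma=\rho\alpha\alpha\cdots$ for some cycle $\alpha$), so the isotropy of the interior of the isotropy bundle at each object is trivial or isomorphic to $\mathbb Z$ as well. When $R$ is reduced and indecomposable, Proposition~\ref{p:Neher} gives that $R\mathbb Z$ has no non-trivial units, while the trivial group is handled trivially; hence \emph{every} object of $\mathscr G_E$ satisfies the hypothesis of Theorem~\ref{t:main}, and the theorem yields the equivalences. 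If instead $E$ satisfies condition (L), a standard argument---given a basic open set of $\partial E$, the exits allow one to extend the defining finite path to a non-eventually-periodic boundary path---shows $\mathscr G_E$ is topologically principal, hence effective, so Corollary~\ref{c:effective.case.gpd.rec} applies over any indecomposable $R$. The only non-trivial point beyond the two reconstruction theorems is the effectivity of $(\mathscr G_E)_1$ in the $\mathbb Z$-graded case, which is a direct combinatorial check on boundary paths.
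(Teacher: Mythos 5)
Your proposal is correct, and for the ungraded and condition~(L) cases it follows the paper's own route: identify $L_R(E)$ with $R\mathscr G_E$, note that every isotropy group of $\mathscr G_E$ is trivial or infinite cyclic, apply Proposition~\ref{p:Neher} to get the no-non-trivial-units hypothesis of Theorem~\ref{t:main} at every (hence a dense set of) objects, and under condition~(L) use topological principality to invoke the effective case over an arbitrary indecomposable $R$. Where you genuinely diverge is the graded case: the paper treats graded and ungraded uniformly through Theorem~\ref{t:main}, relying on the trivial-or-$\mathbb Z$ description of the isotropy and hence on $R$ being reduced, whereas you observe that the degree-zero component $c\inv(0)$ of $\mathscr G_E$ is principal (an arrow is literally a triple $(\eta,k,\xi)$, so $\eta=\xi$ and $k=0$ already force it to be an identity, as your prefix-length argument confirms) and then invoke Corollary~\ref{c:effective.case.gpd.rec}. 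This buys a sharper conclusion than the theorem actually claims --- the graded equivalences hold over \emph{any} indecomposable $R$, with no reducedness and no condition~(L) --- and is consistent with the general principle that graded reconstruction for Leavitt path algebras only needs the diagonal to be maximal commutative in the degree-zero part. The paper's uniform treatment is shorter but does not extract this extra generality in the graded setting.
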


An important special case of Theorem~\ref{t:main.Leavitt} is when $\mathscr G$ is also a path groupoid.

\begin{appendix}
\section{The non-Hausdorff case}
In this appendix we extend Theorem~\ref{t:construct.from.bisection} to the case where $\mathscr G'$ is not assumed Hausdorff. Let $R$ be a commutative ring with unit.  If $\mathscr G$ is an ample groupoid with $\mathscr G\skel 1$ not necessarily Hausdorff (but $\mathscr G\skel 0$ is assumed Hausdorff), we can still define the groupoid algebra $R\mathscr G$.  In this case, we let $\Bis_c(\mathscr G)$ be the set of compact Hausdorff local bisections.  It is still an inverse semigroup and a basis for the topology on $\mathscr G\skel 1$.  Note that elements of $\Bis_c(\mathscr G)$ are open but not necessarily closed.  Define $R\mathscr G$ to be the $R$-span of the characteristic functions $\chi_U$ with $U\in \Bis_c(\mathscr G)$ equipped with the convolution product.  This is still an $R$-algebra and coincides with our previous definition when $\mathscr G\skel 1$ is Hausdorff.  See~\cite{mygroupoidalgebra} for details.  The diagonal subalgebra $D(\mathscr G)$ is the span of the characteristic functions of compact open subsets of $\mathscr G\skel 0$, which is still the commutative algebra of locally constant $R$-valued functions on $\mathscr G\skel 0$ with pointwise product.

Our goal is to show that if $\mathscr G$ is a Hausdorff ample groupoid satisfying the local bisection hypothesis (relative to $R$) and $\mathscr  G'$ is a (not necessarily Hausdorff) ample groupoid, then the existence of a diagonal-preserving isomorphism  between $R\mathscr G$ and $R\mathscr G'$ implies that $\mathscr G$ and $\mathscr G'$ are isomorphic. 

The following proposition is well known.

\begin{Prop}\label{p:isHauss}
An ample groupoid $\mathscr G$ is Hausdorff if and only if $\mathscr G\skel 0$ is a closed subspace of $\mathscr G\skel 1$.
\end{Prop}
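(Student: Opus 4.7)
The plan is to prove both directions of the equivalence, with the reverse implication carrying essentially all of the content. For $(\Rightarrow)$, I would note that under the identification of objects with identity arrows, $\mathscr G\skel 0 = \{\gamma \in \mathscr G\skel 1 \mid \gamma = \dom(\gamma)\}$. The map $\phi\colon \mathscr G\skel 1 \to \mathscr G\skel 1 \times \mathscr G\skel 1$ defined by $\phi(\gamma) = (\gamma, \dom(\gamma))$ is continuous, and $\mathscr G\skel 0 = \phi\inv(\Delta)$ for $\Delta$ the diagonal of $\mathscr G\skel 1 \times \mathscr G\skel 1$. Hausdorffness of $\mathscr G\skel 1$ makes $\Delta$ closed, hence so is $\mathscr G\skel 0$.

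For $(\Leftarrow)$, I would fix distinct $\alpha, \beta \in \mathscr G\skel 1$ and argue by cases. If $\dom(\alpha)\neq\dom(\beta)$ or $\ran(\alpha)\neq\ran(\beta)$, I would invoke Hausdorffness of $\mathscr G\skel 0$ (it is a Boolean space) to separate the differing pair of objects by disjoint open sets, then pull these back through the continuous map $\dom$ or $\ran$ to separate $\alpha$ from $\beta$ in $\mathscr G\skel 1$. The substantive case is $\dom(\alpha)=\dom(\beta)$ together with $\ran(\alpha)=\ran(\beta)$, and this is where closedness of $\mathscr G\skel 0$ must enter.

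In that case I would choose open local bisections $U\ni\alpha$ and $V\ni\beta$, shrink them via $\dom\inv(\dom(U)\cap\dom(V))$ so that they share a common domain $D$, and let $\sigma_U\colon D\to U$ and $\sigma_V\colon D\to V$ be the continuous sections inverting $\dom|_U$ and $\dom|_V$. The continuous map $m\colon D\to\mathscr G\skel 1$ given by $m(x)=\sigma_U(x)\sigma_V(x)\inv$ satisfies $m\inv(\mathscr G\skel 0) = \{x\in D \mid \sigma_U(x) = \sigma_V(x)\}$, since the product is an identity precisely when right-multiplication by $\sigma_V(x)$ recovers $\sigma_U(x)$. Because $\mathscr G\skel 0$ is closed, this agreement locus is closed in $D$; its complement $W$ is open and contains $\dom(\alpha)$ (as $\sigma_U(\dom(\alpha))=\alpha\neq\beta=\sigma_V(\dom(\alpha))$), and the images $\sigma_U(W), \sigma_V(W)$ are the desired disjoint open neighborhoods of $\alpha$ and $\beta$, since any common point would force the sections to agree at some $x\in W$. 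The main obstacle I expect is cleanly setting up this common-domain parametrization and verifying the agreement-locus identity; once that is in hand, closedness of $\mathscr G\skel 0$ reduces the separation to a routine open/closed argument exploiting that local bisections are homeomorphic to open subsets of $\mathscr G\skel 0$ via $\dom$.
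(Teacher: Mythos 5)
Your proof is correct, and the forward direction (unit space as the preimage of the diagonal under $\gamma\mapsto(\gamma,\dom(\gamma))$, equivalently the equalizer of $\dom$ and the identity) is the same as the paper's. In the reverse direction your implementation differs from the paper's in the substantive case. Where you pass to a common domain $D$, form the two continuous sections $\sigma_U,\sigma_V$, and pull $\mathscr G\skel 0$ back along the ``difference map'' $m(x)=\sigma_U(x)\sigma_V(x)\inv$ to exhibit the agreement locus as closed, the paper handles any pair with $\dom(\alpha)=\dom(\beta)$ in one stroke: it notes $\alpha\beta\inv\in\mathscr G\skel 1\setminus\mathscr G\skel 0$, takes a single local bisection $U\ni\beta$, and checks that the open set $(\mathscr G\skel 1\setminus\mathscr G\skel 0)U$ and $U$ are disjoint neighborhoods of $\alpha$ and $\beta$. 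The two arguments hinge on the same fact --- that the ``difference'' of two distinct arrows with a common domain lies in the open complement of the unit space --- but the paper's translation-by-a-bisection packaging avoids the common-domain reparametrization and the continuity check for $m$ entirely, and needs no case split on the ranges; your version buys a more explicit, coordinate-style picture of the separating neighborhoods at the cost of a little more setup. Both are complete proofs.
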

\begin{proof}
If $\mathscr G\skel 1$ is Hausdorff, then since $\mathscr G\skel 0$ is the equalizer of the domain map and the identity map on $\mathscr G\skel 1$, it is clearly closed.  Suppose that $\mathscr G\skel 0$ is closed.  Let $\alpha\neq\beta\in \mathscr G\skel 1$ and suppose first that $\dom(\alpha)\neq \dom(\beta)$. If $U,V$ are disjoint neighborhoods of $\dom(\alpha)$ and $\dom(\beta)$, respectively, in $\mathscr G\skel 0$, then $\dom\inv(U)$ and $\dom\inv(V)$ are disjoint neighborhoods of $\alpha,\beta$, respectively.  So next assume that $\dom(\alpha)=\dom(\beta)$, whence $\alpha\beta\inv\in \mathscr G\skel 1\setminus G\skel 0$.  Let $U$ be a local bisection containing $\beta$.  Then $(\mathscr G\skel 1\setminus G\skel 0)U$ and $U$ are disjoint neighborhoods of $\alpha,\beta$ respectively.  This completes the proof.
\end{proof}

Next we verify that a $G$-graded groupoid $\mathscr G$ is Hausdorff if and only if $\Bis^h_c(\mathscr G)$ has binary meets.

\begin{Prop}\label{p:meet.Haus}
Let $\mathscr G$ be an ample $G$-graded groupoid.  Then $\mathscr G\skel 1$ is Hausdorff if and only if $\Bis^h_c(\mathscr G)$ admits binary meets.
\end{Prop}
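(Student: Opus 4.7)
The natural partial order on $\Bis_c^h(\mathscr G)$ is set-theoretic inclusion (for $U \leq V$ one takes $U = V \cdot \dom(U)$ with $\dom(U)$ regarded as an idempotent), so whenever the meet $U \wedge V$ exists in $\Bis_c^h(\mathscr G)$ it is characterized as the largest element of $\Bis_c^h(\mathscr G)$ contained in $U \cap V$. The plan is to exploit this in both directions, using Proposition~\ref{p:isHauss} to reduce Hausdorffness of $\mathscr G\skel 1$ to closedness of $\mathscr G\skel 0$.

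For the forward direction, I would assume $\mathscr G\skel 1$ is Hausdorff and check directly that $U \cap V \in \Bis_c^h(\mathscr G)$ for $U,V \in \Bis_c^h(\mathscr G)$. Compactness of $V$ in the Hausdorff space $\mathscr G\skel 1$ forces $V$ to be closed, so $U \cap V$ is closed in the compact set $U$, hence compact; it is open as an intersection of opens, a local bisection as a subset of $U$, and homogeneous (empty if $U$ and $V$ lie in distinct grading components, contained in the common component otherwise). It is manifestly the meet.

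For the reverse direction, assume $\Bis_c^h(\mathscr G)$ admits binary meets but, toward a contradiction, $\mathscr G\skel 1$ is not Hausdorff. Then Proposition~\ref{p:isHauss} produces some $\alpha \in \overline{\mathscr G\skel 0}\setminus\mathscr G\skel 0$. If $c(\alpha)\neq 1$, any $U \in \Bis_c^h(\mathscr G)_{c(\alpha)}$ containing $\alpha$ is an open neighborhood of $\alpha$ disjoint from $\mathscr G\skel 0 \subseteq c\inv(1)$, a contradiction; so $c(\alpha)=1$. Pick $U \in \Bis_c^h(\mathscr G)_1$ with $\alpha \in U$ and set $E = \dom(U) \in \Bis_c^h(\mathscr G)_1$ (viewed as identity morphisms). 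Let $W = U \wedge E$. Since $U \cap E$ is exactly the set of identity morphisms in $U$, that is, $U \cap \mathscr G\skel 0$, we have $W \subseteq U \cap \mathscr G\skel 0$; conversely every compact open subset of $\mathscr G\skel 0$ contained in $U \cap \mathscr G\skel 0$ is an element of $\Bis_c^h(\mathscr G)_1$ dominated by both $U$ and $E$, and such subsets cover $U \cap \mathscr G\skel 0$ because $\mathscr G\skel 0$ has a basis of compact open sets, forcing $W = U \cap \mathscr G\skel 0$. Now $W$ is compact and $U$ is Hausdorff (by the definition of $\Bis_c^h$ in the non-Hausdorff appendix setting), so $W$ is closed in $U$ and $U\setminus W$ is an open neighborhood of $\alpha$ in $\mathscr G\skel 1$; since $\alpha \in \overline{\mathscr G\skel 0}$, some $x \in (U\setminus W)\cap \mathscr G\skel 0$ exists, but then $x \in U \cap \mathscr G\skel 0 = W$, contradicting $x \notin W$.

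The main conceptual hurdle is the identification $W = U \cap \mathscr G\skel 0$, combined with the use of Hausdorffness of $U$ itself (not $\mathscr G\skel 1$) to upgrade compactness of $W$ into closedness in $U$; this is precisely where the definition of $\Bis_c^h$ as consisting of compact \emph{Hausdorff} local bisections pays off, and it is what produces the contradictory open neighborhood of $\alpha$.
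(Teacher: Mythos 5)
Your proof is correct and follows essentially the same route as the paper's: the forward direction uses compactness of $V$ in the Hausdorff space $\mathscr G\skel 1$ to get $U\cap V$ compact open, and the reverse direction forms the meet of $U$ with the idempotent $\dom(U)=U\inv U$, identifies it with $U\cap\mathscr G\skel 0$ via the basis of compact open subsets of $\mathscr G\skel 0$, and uses Hausdorffness of $U$ itself to conclude via Proposition~\ref{p:isHauss} that $\mathscr G\skel 0$ is closed. The only cosmetic differences are that the paper handles homogeneity of $U\cap V$ via the order-ideal property of $\Bis^h_c(\mathscr G)$ in $\Bis_c(\mathscr G)$ and argues the reverse direction directly rather than by contradiction, which also renders your case split on $c(\alpha)$ unnecessary.
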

\begin{proof}
If $\mathscr G\skel 1$ is Hausdorff and $U,V\in \Bis_c(\mathscr G)$, then $U\cap V$ is closed in $U$ and hence compact.  Thus $U\cap V\in \Bis_c(\mathscr G)$ and so $\Bis_c(\mathscr G)$ admits binary meets.  As $\Bis^h_c(\mathscr G)$ is a full inverse subsemigroup, and hence an order ideal in $\Bis_c(\mathscr G)$, it also admits binary meets.  
  Suppose that $\Bis^h_c(\mathscr G)$ has binary meets.  We show that $\mathscr G\skel 0$ is closed in $\mathscr G\skel 1$.  Let $\gamma\in \mathscr G\skel 1\setminus \mathscr G\skel 0$.  Choose $U\in \Bis^h_c(\mathscr G)$ with $\gamma\in U$ and let $V$ be the meet in $\Bis_c^h(\mathscr G)$ of $U$ and $U\inv U$.  Then $V\subseteq U\inv U\subseteq \mathscr G\skel 0$ and $V\subseteq U$.  Since $U$ is Hausdorff and $V$ is compact, we deduce that $V$ is closed in $U$ and hence $U\setminus V$ is open.  Clearly, $\gamma\in U\setminus V$.  Suppose that $x\in U\cap \mathscr G\skel 0$.  Then $x\in U\cap U\inv U$ and there is a compact open set $W$ with $x\in W\subseteq U\cap U\inv U\subseteq \mathscr G\skel 0$.  Thus $W\in \Bis^h_c(\mathscr G)$ is a common lower bound of $U,U\inv U$ and so $W\subseteq V$.  It follows that $U\setminus V\subseteq \mathscr G\skel 1\setminus \mathscr G\skel 0$.  We conclude that $\mathscr G\skel 0$ is closed and so $\mathscr G$ is Hausdorff by Proposition~\ref{p:isHauss}.
\end{proof}

We observe that all the results of Subsection~\ref{ss:local.bis} up to, and including, Corollary~\ref{c:diag.inv.sub} do not use the Hausdorff assumption and hence are valid for non-Hausdorff groupoids.  Also the results of Section~\ref{s:recons} up to, and including, Proposition~\ref{p:doesnot.ident} do not use the Hausdorff assumption.  Thus we have that the normalizer $N$ of the diagonal subalgebra is an inverse semigroup in the non-Hausdorff case and that $N\cap D(\mathscr G)$ is a normal inverse subsemigroup such that $\Bis^h_c(\mathscr G)$ embeds as a full inverse subsemigroup of $N/{\sim}$ (where $\sim$ is the idempotent-separating congruence associated to $N\cap D(\mathscr G)$).  We then have the following extension of Theorem~\ref{t:construct.from.bisection}.

\begin{Thm}\label{t:construct.from.bisection2}
Let $R$ be an indecomposable commutative ring with unit and let $\mathscr G$ be a $G$-graded Hausdorff ample groupoid satisfying the local bisection hypothesis. Let $\mathscr G'$ be any ample groupoid (not necessarily Hausdorf).  Then the following are equivalent.
\begin{enumerate}
\item There is a graded isomorphism $\p\colon \mathscr G\to \mathscr G'$.
\item There is a diagonal-preserving graded isomorphism $\Phi\colon R\mathscr G\to R\mathscr G'$ of $R$-algebras.
\item There is a diagonal-preserving graded isomorphism $\Phi\colon R\mathscr G\to R\mathscr G'$ of rings.
\end{enumerate}
\end{Thm}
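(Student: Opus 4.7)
The implications $(1)\Rightarrow(2)\Rightarrow(3)$ are immediate, so we focus on $(3)\Rightarrow(1)$. The plan is to bootstrap back to the Hausdorff case handled by Theorem~\ref{t:construct.from.bisection}: first we show that $\mathscr G'$ must satisfy the local bisection hypothesis; then we deduce from this that $\mathscr G'\skel 1$ is in fact Hausdorff; whereupon Theorem~\ref{t:construct.from.bisection} applies verbatim to the given $\Phi$.

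For the first step, we revisit the argument of Theorem~\ref{t:robust} with $\mathscr G$ (which is Hausdorff and satisfies the local bisection hypothesis) playing the role of $\mathscr G_2$ and $\mathscr G'$ playing the role of $\mathscr G_1$. As the appendix observes, the results through Proposition~\ref{p:doesnot.ident}, together with Corollary~\ref{c:isotropy.pres} and Proposition~\ref{p:reduced.to.iso}, remain valid when $\mathscr G'\skel 1$ is not assumed Hausdorff; so the reduction to a trivially graded group bundle still applies and $\Bis^h_c(\mathscr G')$ still embeds as a full inverse subsemigroup of $N'/{\sim}$. A careful inspection of the proof of Theorem~\ref{t:robust} shows that its Hausdorffness hypothesis is invoked only on $\mathscr G_2$, specifically to intersect two compact local bisections within $\Bis^h_c(\mathscr G_2)$ and to use distributivity of $\Bis^h_c(\mathscr G_2)$; in our setup $\mathscr G_2=\mathscr G$ is Hausdorff, so both uses are legitimate. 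The spanning of $R\mathscr G'$ over $D(\mathscr G')$ by $\Phi(\Bis^h_c(\mathscr G))$ follows from the (non-Hausdorff) definition of $R\mathscr G'$ as the $R$-span of $\{\chi_V:V\in \Bis^h_c(\mathscr G')\}$ combined with the fact that $\Phi$ preserves the diagonal; Remark~\ref{r:nature.of.proj} is applied only on the side $N$, where the local bisection hypothesis is known. The argument then produces $\psi'(\Bis^h_c(\mathscr G'))=N'/{\sim}$, which by Proposition~\ref{p:doesnot.ident} means that $\mathscr G'$ satisfies the local bisection hypothesis.

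Once we know $\mathscr G'$ satisfies the local bisection hypothesis, Proposition~\ref{p:doesnot.ident} gives $\Bis^h_c(\mathscr G')\cong N'/{\sim}$; combining with the graded isomorphism $N/{\sim}\cong N'/{\sim}$ induced by $\Phi$ (Remark~\ref{r:determineNmodsim}) and with $N/{\sim}\cong \Bis^h_c(\mathscr G)$, we obtain a $G$-graded inverse semigroup isomorphism $\Bis^h_c(\mathscr G)\cong \Bis^h_c(\mathscr G')$. Since $\mathscr G$ is Hausdorff, $\Bis^h_c(\mathscr G)$ admits binary meets by Proposition~\ref{p:meet.Haus}; binary meets are preserved under isomorphism of inverse semigroups, so $\Bis^h_c(\mathscr G')$ admits binary meets, and the converse direction of Proposition~\ref{p:meet.Haus} forces $\mathscr G'\skel 1$ to be Hausdorff. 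With both groupoids now Hausdorff and $\mathscr G$ satisfying the local bisection hypothesis, Theorem~\ref{t:construct.from.bisection} applies directly to $\Phi$ and yields the desired $G$-graded isomorphism $\mathscr G\cong \mathscr G'$. The main technical obstacle is the first step: one must audit every appeal to Hausdorffness in the proof of Theorem~\ref{t:robust} and its supporting results to confirm that each invocation concerns only the groupoid on which the local bisection hypothesis is known a priori; after this audit, the remainder of the argument closes up formally via Proposition~\ref{p:meet.Haus}.
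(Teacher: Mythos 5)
Your endgame (deduce Hausdorffness of $\mathscr G'$ from binary meets via Proposition~\ref{p:meet.Haus}, then invoke Theorem~\ref{t:construct.from.bisection}) is sound, but your first step has a genuine gap. You claim that the appendix certifies Proposition~\ref{p:reduced.to.iso} and Corollary~\ref{c:isotropy.pres} in the non-Hausdorff setting; it does not. For Subsection~\ref{ss:local.bis} the appendix only asserts validity up to Corollary~\ref{c:diag.inv.sub}, and Proposition~\ref{p:reduced.to.iso} lies beyond that point for good reason: the proof of its second part picks, for $\alpha\in\supp(m)$, a compact local bisection $U$ with $\alpha\in U\subseteq\supp(m)$ \emph{because $m$ is locally constant}, and elements of $R\mathscr G'$ are not locally constant when $\mathscr G'{}\skel 1$ is non-Hausdorff (characteristic functions of non-closed compact open bisections are discontinuous). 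Your ``audit'' of Theorem~\ref{t:robust} is likewise inaccurate: the reduction to group bundles applies Proposition~\ref{p:reduced.to.iso} and Corollary~\ref{c:isotropy.pres} (hence Proposition~\ref{p:z.of.diag}, whose converse direction needs $\supp(f)\subseteq\mathscr H\skel 1\Rightarrow f\in R\mathscr H$, a delicate point for non-Hausdorff algebras) to \emph{both} groupoids, i.e.\ to your $\mathscr G_1=\mathscr G'$, not only to the groupoid carrying the local bisection hypothesis. The distributivity of $\Bis_c(\mathscr G_1)$ invoked at the end of that proof is also only established in the paper under a Hausdorff assumption. So ``first prove the local bisection hypothesis for $\mathscr G'$, then deduce Hausdorffness'' cannot be carried out with the tools as stated.

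The fix is to reverse the order, and it is simpler than what you propose: you do not need the local bisection hypothesis on $\mathscr G'$ to get binary meets. By the part of Proposition~\ref{p:doesnot.ident} valid without Hausdorffness, $\Bis_c^h(\mathscr G')$ embeds as a \emph{full} inverse subsemigroup of $N'/{\sim}$, and $\Phi$ together with Remark~\ref{r:determineNmodsim} and the local bisection hypothesis on $\mathscr G$ gives $N'/{\sim}\cong N/{\sim}\cong\Bis_c^h(\mathscr G)$. A full inverse subsemigroup is an order ideal, and an order ideal of an inverse semigroup with binary meets is closed under binary meets; since $\Bis_c^h(\mathscr G)$ has binary meets by Proposition~\ref{p:meet.Haus}, so does $\Bis_c^h(\mathscr G')$, whence $\mathscr G'$ is Hausdorff. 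Only now, with both groupoids Hausdorff, does one apply Theorem~\ref{t:construct.from.bisection} (which internally handles transferring the local bisection hypothesis via Theorem~\ref{t:robust}, legitimately, since its Hausdorff hypotheses are met). This is the route the paper takes.
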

\begin{proof}
Trivially, (1) implies (2) implies (3).  Suppose that (3) holds.   Let $N, N'$ be the normalizers of the diagonal subalgebra in $R\mathscr G, R\mathscr G'$, respectively.  As $\Phi$ is a diagonal-preserving  graded ring isomorphism, it follows that it induces an somorphism of inverse semigroups $\Bis_c^h(\mathscr G)\cong N/{\sim}\to N'/{\sim}$ by Proposition~\ref{p:doesnot.ident}  and Remark~\ref{r:determineNmodsim}.   Thus $\Bis_c^h(\mathscr G')$ is isomorphic to a full inverse subsemigroup of the inverse semigroup $\Bis^h_c(\mathscr G)$.  But $\Bis^h_c(\mathscr G)$ admits binary meets by Proposition~\ref{p:meet.Haus} and hence so does $\Bis_c^h(\mathscr G')$, as it is an order ideal in an inverse semigroup with binary meets (and hence closed under binary meets).  Therefore, $\mathscr G'$ is Hausdorff by Proposition~\ref{p:meet.Haus}.  We can now apply Theorem~\ref{t:construct.from.bisection}.
\end{proof}

As a consequence, we obtain the following generalization of Theorem~\ref{t:main}.

\begin{Thm}\label{t:main2}
Let $R$ be an indecomposable commutative ring with unit,  $\mathscr G$ a Hausdorff ample $G$-graded groupoid and  $\mathscr G'$ any ample groupoid (not necessarily Hausdorff). Suppose that $\mathscr G$ has a dense set of objects $x$ such that the group algebra over $R$ of the isotropy group at $x$ of the interior of the isotropy bundle of the homogeneous component of $1$ has no non-trivial units.  Then the following are equivalent.
\begin{enumerate}
\item There is a graded isomorphism $\p\colon \mathscr G\to \mathscr G'$.
\item There is a diagonal-preserving graded isomorphism $\Phi\colon R\mathscr G\to R\mathscr G'$ of $R$-algebras.
\item There is a diagonal-preserving graded isomorphism $\Phi\colon R\mathscr G\to R\mathscr G'$ of rings.
\end{enumerate}
\end{Thm}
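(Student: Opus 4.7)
The plan is to observe that Theorem~\ref{t:main2} is essentially a repackaging of Theorem~\ref{t:construct.from.bisection2} together with the sufficient condition for the local bisection hypothesis supplied by Theorem~\ref{t:local.bis.from.iso}. The proof should therefore be short: the hypothesis on dense isotropy is precisely what is needed to verify the local bisection hypothesis on $\mathscr G$, and once that is in hand the appendix's non-Hausdorff version of the reconstruction theorem takes over.

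First I would dispatch the trivial implications. The implication (1)$\Rightarrow$(2) is standard: a graded isomorphism $\p\colon \mathscr G\to \mathscr G'$ produces the $R$-algebra isomorphism $f\mapsto f\circ \pinv$, which is diagonal-preserving since $\pinv$ restricts to a homeomorphism of unit spaces, and graded because $\p$ is. The implication (2)$\Rightarrow$(3) is trivial (forgetting the $R$-algebra structure). So the content is in (3)$\Rightarrow$(1).

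For (3)$\Rightarrow$(1), the key step is to invoke Theorem~\ref{t:local.bis.from.iso}: the density hypothesis on objects $x\in \mathscr G\skel 0$ for which $RH_x$ has no non-trivial units (where $H_x$ is the isotropy group at $x$ in the interior of the isotropy bundle of $\mathscr G_1$) is exactly the hypothesis of that theorem, so $\mathscr G$ satisfies the local bisection hypothesis relative to $R$ and the grading. With this in hand, Theorem~\ref{t:construct.from.bisection2} applies directly to the Hausdorff groupoid $\mathscr G$ and the (possibly non-Hausdorff) groupoid $\mathscr G'$, producing a graded isomorphism $\p\colon \mathscr G\to \mathscr G'$ (and, as a by-product, forcing $\mathscr G'$ to be Hausdorff via Proposition~\ref{p:meet.Haus}).

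There is essentially no obstacle: the real technical content, namely the passage from a diagonal-preserving ring isomorphism to an isomorphism of graded inverse semigroups of homogeneous compact local bisections, and the Hausdorffness of $\mathscr G'$, has already been handled in Theorem~\ref{t:construct.from.bisection2}; and the verification that the isotropy condition implies the local bisection hypothesis was done in Theorem~\ref{t:local.bis.from.iso}. The only thing to note is that the hypotheses of both theorems are placed only on $\mathscr G$, which is consistent with the asymmetric statement of Theorem~\ref{t:main2} and is the whole point of the present improvement over~\cite{BCvH2017,reconstruct,CarlsenSteinberg}.
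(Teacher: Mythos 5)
Your proposal is correct and matches the paper's (implicit) argument exactly: the paper states Theorem~\ref{t:main2} as an immediate consequence of Theorem~\ref{t:construct.from.bisection2}, with the density hypothesis furnishing the local bisection hypothesis via Theorem~\ref{t:local.bis.from.iso}, precisely as you describe.
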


Similarly, the Hausdorff assumption can be removed on $\mathscr G$ in Theorem~\ref{t:main.Leavitt}.
\end{appendix}
%\bibliographystyle{abbrv}
%\bibliography{standard2}
\def\malce{\mathbin{\hbox{$\bigcirc$\rlap{\kern-7.75pt\raise0,50pt\hbox{${\tt
  m}$}}}}}\def\cprime{$'$} \def\cprime{$'$} \def\cprime{$'$} \def\cprime{$'$}
  \def\cprime{$'$} \def\cprime{$'$} \def\cprime{$'$} \def\cprime{$'$}
  \def\cprime{$'$} \def\cprime{$'$}

\end{document}